\newtheorem{thm}{Theorem}
\newtheorem{prop}{Proposition}
\newtheorem{lem}[prop]{Lemma}
\newtheorem{rmk}[prop]{Remark}
\newtheorem{claim}[prop]{Claim}
\newtheorem{cor}[prop]{Corollary}
\newtheorem{defn}[prop]{Definition}
\DeclarePairedDelimiter{\nld}{\lVert}{\rVert_{\ld}}
\DeclarePairedDelimiter{\nhu}{\lVert}{\rVert_{\hu}}
\DeclarePairedDelimiter{\nldd}{\lVert}{\rVert^{2}_{\ld}}
\DeclarePairedDelimiter{\nhud}{\lVert}{\rVert^{2}_{\hu}}
\newcommand{\psld}[2]{\left( #1,#2 \right)_{L^2(\Rd)}}
\newcommand{\dual}[2]{\left< #1,#2 \right>}
\newcommand{\e}{\varepsilon}
\newcommand{\g}{\gamma}
\newcommand{\s}{\sigma}
\newcommand{\w}{\omega}
\newcommand{\D}{\Delta}
\newcommand{\C}{\mathbb{C}}
\newcommand{\N}{\mathbb{N}}
\newcommand{\R}{\mathbb{R}}
\newcommand{\Rd}{\mathbb{R}^d}
\newcommand{\ld}{L^2(\Rd)}
\newcommand{\hu}{H^1(\Rd)}
\DeclareMathAlphabet{\mathpzc}{OT1}{pzc}{m}{it}
\DeclareMathAlphabet{\mathcalligra}{T1}{calligra}{m}{n}
\renewcommand{\Re}{\mathscr{R}\!\mathcalligra{e}\,}
\renewcommand{\Im}{\mathscr{I}\hspace*{-5pt}\mathcalligra{m}\,}
\renewcommand{\leq}{\leqslant}
\renewcommand{\geq}{\geqslant}
\renewcommand{\le}{\leqslant}
\renewcommand{\ge}{\geqslant}
\newcommand{\intrd}{\int_{\Rd}}
\newcommand{\m}[1]{\mathbb{#1}}
\newcommand{\q}[1]{\mathscr{#1}}
\DeclareMathOperator{\Id}{\mathrm{Id}}
\DeclareMathOperator{\Sp}{\mathrm{Sp}}
\begin{document}

\title[High speed excited multi-solitons in NLS]{High speed excited multi-solitons in nonlinear Schrödinger
equations}

\subjclass[2010]{35Q55,(35Q51,37K40)}
\keywords{Multi-solitons, Nonlinear Schrödinger equations, Excited states}

\date\today

\author{Raphaël Côte}
\address[Raphaël Côte]{CMLS, École Polytechnique, 91128 Palaiseau Cedex, France}
\email{cote@math.polytechnique.fr}

\author{Stefan Le Coz}
\address[Stefan Le Coz]{Laboratoire Jacques-Louis Lions, Université Pierre et Marie Curie, Boîte courrier 187, 75252 Paris Cedex 05, France}
\email{lecoz@ann.jussieu.fr}
\thanks{\emph{Correspondind Author:} S. Le Coz\\ S. Le Coz is supported by ANR project ESONSE}

\begin{abstract}
We consider the nonlinear Schrödinger equation in $\m R^d$
\[ 
i \partial_t u + \Delta u + f(u) =0. 
\]
For $d \ge 2$, this equation admits travelling wave solutions of the form $e^{i\omega t} \Phi(x)$ (up to a Galilean transformation), where $\Phi$ is a fixed profile, solution to $- \Delta \Phi + \w \Phi = f(\Phi)$, but \emph{not the ground state}. This kind of profiles are called excited states. In this paper, we construct solutions to NLS behaving like a sum of $N$ excited states which spread up quickly as time grows (which we call multi-solitons). We also show that if the flow around one of these excited states is linearly unstable, then the multi-soliton is not unique, and is unstable. 

\end{abstract}

\maketitle

\section{Introduction}

\subsection*{Setting of the problem}

We consider the nonlinear Schrödinger equation 
\begin{equation}\label{eq:nls} \tag{NLS}
iu_t+\D u+f(u)=0
\end{equation}
where $u:\R\times\Rd\to\C$ and $f:\C\to\C$ is defined for any $z\in\C$ by
$f(z)=g(|z|^2)z$ with $g\in\mathcal{C}^0([0,+\infty),\R)\cap\mathcal{C}^1((0,+\infty),\R)$.  

Equation \eqref{eq:nls} admits special travelling wave solutions called solitons:
given a frequency $\w_0>0$, an initial phase $\g_0\in\R$, 
initial position and speed $x_0,v_0\in\Rd$ and a solution $\Phi_0\in\hu$ of 
\begin{equation}\label{eq:snls}
-\D \Phi_0+\w_0\Phi_0-f(\Phi_0)=0,
\end{equation}
a \emph{soliton} solution of \eqref{eq:nls} travelling on the line
$x=x_0+v_0t$ is given by 
\begin{equation}\label{eq:defsoliton}
R_{\Phi_0,\w_0,\g_0,v_0,x_0}(t,x):=\Phi_0(x-v_0t-x_0)e^{i(\frac{1}{2}v_0\cdot
  x-\frac{1}{4}|v_0|^2t+\w_0t+\g_0)}. 
\end{equation}

Among solutions of \eqref{eq:snls}, it is common to distinguish between
\emph{ground states}, and \emph{excited states}. A \emph{ground state} (or \emph{least energy solution})
minimizes among all solutions of \eqref{eq:snls} the action $S_0$, defined
for $v \in H^1(\Rd)$ by \label{page:action}
\[
S_0(v):=\frac{1}{2}\nldd{\nabla v}+\frac{\w_0}{2}\nldd{v}-\intrd F(v)dx,
\]
where $F(z):=\int_0^ {|z|}g(s^2)sds$ for all $z\in\C$.
An \emph{excited state} is a solution to \eqref{eq:snls} which is not a
ground state. In general, we shall refer to any solution of \eqref{eq:snls}
as \emph{bound state}. We also mention the existence of a particular type
of excited states, the \emph{vortices}. A vortex is a special solution of
\eqref{eq:snls} which is non-trivially complex-valued, i.e. with a non-zero
angular momentum. Vortices can be constructed following the ansatz
described by Lions in \cite{Li86}. We shall sometimes abuse terminology and
call ground state (resp. excited state) a soliton build with a ground state
(resp. an excited state).

A multi-soliton is a solution of \eqref{eq:nls} built with solitons. More
precisely, let $N\in\N\setminus\{0,1\}$, $\w_1,...,\w_N>0$,
$\g_1,...,\g_N\in\R$, $v_1,...,v_N\in\Rd$, $x_1,...,x_N\in\Rd$ and
$\Phi_1,...,\Phi_N\in\hu$ solutions of \eqref{eq:snls} (with $\w_0$ replaced by $\w_1,...,\w_N$). Set 
\begin{equation}
R_j(t,x):=R_{\Phi_j,\w_j,\g_j,v_j,x_j}(t,x), \quad
R(t,x):=\sum_{j=1}^N R_j(t,x).
\end{equation}
Due to the non-linearity, the function $R$ is not a solution of
\eqref{eq:nls} anymore. What we call \emph{multi-soliton} is a solution $u$
of \eqref{eq:nls} defined on $[T_0,+\infty)$ for some $T_0\in\R$ and such
  that 
\[
\lim_{t\to+\infty}\nhu{u(t)-R(t)}=0.
\]
In this paper, we are concerned with existence, non-uniqueness and
instability of multi-solitons build on excited states, which we will refer
to as excited multi-solitons.

\subsection*{History and known results}

Solitons and multi-solitons play a crucial role in understanding the
dynamics of nonlinear dispersive evolution equations such as Korteweg-de
Vries equations or nonlinear Schrödinger equations (see e.g. \cite{Ta09}
for a general overview). 

To fix ideas, consider the pure-power nonlinearity $f(u) = |u|^{p-1} u$. Equation \eqref{eq:nls} is $L^2$-critical (resp. subcritical, resp. supercritical) if $p= 1 + \frac{4}{d}$ (resp. $p<1+ \frac{4}{d}$, $p>1+ \frac{4}{d}$). The \emph{soliton resolution conjecture} states that, 
at least in the $L^2$-subcritical case, a generic solution will eventually decompose into a sum of \emph{ground state} solitons and
a small radiative term, in some sense we will not try to make precise.
However, this conjecture remains widely open, except when the equation is completely
integrable (like the classical Korteweg-de Vries equation $u_t + u_{xxx} +
uu_x=0$) and explicit solutions are known \cite{La80,Sc86}.

Nevertheless,  multi-solitons \emph{based on ground states} are supposed to be
generic objects for large time; in contrast \emph{excited} multi-solitons are
believed to be singular objects of the flow of \eqref{eq:nls}. However, their
existence shows that a global approach of the large time dynamics must take
care of them.

The first existence result of multi-solitons in a non-integrable setting
was obtained by Merle \cite{Me90} for multi-solitons composed of ground states or excited states for the $L^2$-critical nonlinear Schrödinger equation. For multi-solitons composed only of ground states, the 
$L^2$-subcritical case was treated by Martel and Merle \cite{MaMe06} (see
also Martel \cite{Ma05} for the generalized Korteweg-de Vries equation) and
the $L^2$-supercritical case by Côte, Martel and
Merle \cite{CoMaMe09}. No excited multi-solitons were ever constructed except in the $L^2$-critical case and our result (Theorem
\ref{thm:1}) is the first in that direction: we construct excited
multi-solitons based on excited states which move fast away from one
another.

Study of the dynamics around \emph{ground-states} solitons and
multi-solitons, in particular stability properties, has attracted a lot of
attention since the beginning of the 80's (see e.g.
\cite{BeCa81,CaLi82,GrShSt87,GrShSt90,We83,We85,We86}). The main result states that ground-states solitons are
orbitally stable only in the $L^2$-subcritical case.

So far, little is known about the stability of excited state solitons. All excited states are conjectured to be unstable, regardless of any
assumption on the nonlinearity. For results on instability with
a supercritical nonlinearity, see Grillakis \cite{Gr88} and Jones
\cite{Jo88} in the case of real and radial excited states and Mizumachi for
vortices \cite{Mi05-1,Mi05-2}. Partial results in the $L^2$-subcritical
case are available in the works of Chang, Gustafson, Nakanishi and Tsai
\cite{ChGuNaTs07}, Grillakis \cite{Gr90} and Mizumachi \cite{Mi07}.

Here we show that under a very natural assumption of instability of the
linearized flow around one excited state, the excited multi-soliton is not
unique, and unstable in a strong sense.

\subsection*{Statement of the results}

We make the following assumptions on the nonlinearity (recall that
$f(z)=g(|z|^2)z$ for $z\in\C$). 

\begin{itemize}
\item[(A1)] $g\in\mathcal{C}^0([0,+\infty),\R)\cap\mathcal{C}^1((0,+\infty),\R)$, $g(0)=0$ and $\lim_{s\to 0} sg'(s)=0$.
\item[(A2)] There exist $C>0$ and $1< p<1+\frac{4}{d-2}$ if $d \geq 3$,
  $1<p<+\infty$ if $d=1,2$ such that $|s^2g'(s^2)|\leq Cs^{p-1}$ for
  $s\geq1$. 
\item[(A3)]There exists $s_0 > 0$ such that $F(s_0)>\frac{s_0^2}{2}$.
\end{itemize}

\begin{rmk}
 A typical example of a non-linearity satisfying (A1)-(A3) is given by the power type non-linearity $f(z)=|z|^{p-1}z$ with $1<p<1+\frac{4}{d-2}$ if $d\geq3$, $1<p<+\infty$ if $d=1,2$.
\end{rmk}

Assumptions (A1)-(A3) guarantee that, except in dimension $d=1$ where all
bound states are ground states, there exist ground states and infinitely
many excited states (see
e.g. \cite{BeGaKa83,BeLi83-1,BeLi83-2,Gr90,JoKu86}). In particular, excited states can have arbitrarily large energy and $L^\infty(\Rd)$-norm. Note that every solution of \eqref{eq:snls} is exponentially decaying (see
e.g. \cite{BeSh91}). More precisely, for all $\Phi_0$ solution to
\eqref{eq:snls} we have $ 
e^{\sqrt{\w} |x|}(|\Phi_0|+|\nabla\Phi_0|)\in L^\infty(\Rd)$ for all $\w < \w_0$.

Assumptions (A1)-(A2) ensure  well-posedness in $\hu$ of \eqref{eq:nls},
see e.g. \cite{Ca03} (the equation is then $H^1$-subcritical). In
particular, for any $u_0\in\hu$ there exists a 
unique maximal solution $u$ such that energy, mass and momentum are
conserved. Recall that energy, mass and momentum are defined in the
following way. 
\begin{align*}
E(u)&:=\frac{1}{2}\nldd{\nabla u}-\intrd F(u)dx,\\
M(u)&:=\nldd{u},\\
P(u)&:=\Im\intrd\bar{u}\nabla udx.
\end{align*}
Notice that (A3) makes the equation focusing.

Our first result is the existence of multi-solitons composed of excited
states as soon as the relative speeds $v_j - v_k$ of the solitons are
sufficiently large. 

\begin{thm}\label{thm:1} 
Assume (A1)-(A3).
Let $N\in\N\setminus\{0,1\}$, and for $j=1,...,N$ take $\w_j>0$,
$\g_j\in\R$, $v_j\in\Rd$, $x_j\in\Rd$ and
$\Phi_j\in\hu$ a solution of \eqref{eq:snls} (with $\w_0$ replaced by $\w_j$). Set 
\[
R_j(t,x)=R_{\Phi_j,\w_j,\g_j,v_j,x_j}(t,x):=\Phi_j(x-v_jt-x_j)e^{i(\frac{1}{2}v_j\cdot
  x-\frac{1}{4}|v_j|^2t+\w_jt+\g_j)}.
\]
Let $\w_\star$ and $v_\star$ be given by 
\[
\w_\star:=\frac{1}{2}\min\left\{ \w_j,j=1,...,N \right\},\quad
v_\star:=\frac{1}{9}\min\left\{|v_{j}-v_k|;j,k=1,...,N,j\neq k
\right\}. 
\]
Also introduce
$\alpha:=\sin\left(\frac{\sqrt{\pi}\Gamma(\frac{d-1}{2})}{N^2\Gamma(\frac{d}{2})}\right)$    
(this constant appears naturally in Claim
\ref{cl:choicedirection}).\\
There exists $v_\sharp:=v_\sharp(\Phi_1,...,\Phi_N)>0$ such that if
$v_\star>\alpha^{-1}v_\sharp$ then the following holds.\\ 
There exist $T_0\in\R$ and a solution of \eqref{eq:nls}
$u\in\mathcal{C}([T_0,+\infty),\hu)$ such that for all $t\in[T_0,+\infty)$
we have 
\[ 
\nhu{u(t)-\sum_{j=1}^N R_j(t)}\leq e^{-\alpha\w_\star^{\frac{1}{2}} v_\star t}.
\]
\end{thm}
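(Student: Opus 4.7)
The plan is to follow the compactness strategy initiated by Merle and developed by Martel--Merle and C\^ote--Martel--Merle: produce approximate multi-solitons $u_n$ with a prescribed backward terminal profile, obtain a uniform backward decay, and pass to the limit. The novelty is that no coercive Lyapunov functional is available at an excited state, so the entire smallness must come from the high relative speed $v_\star$. Concretely, fix a sequence $T_n\to+\infty$, let $u_n$ be the Cauchy solution of \eqref{eq:nls} with terminal datum $u_n(T_n)=R(T_n)$, and write $w_n:=u_n-R$. The goal is the uniform estimate
\[
\nhu{w_n(t)}\le e^{-\alpha\w_\star^{1/2}v_\star t},\qquad t\in[T_0,T_n],
\]
with $T_0$ independent of $n$; then a weak-$\hu$ limit at $t=T_0$ together with continuous dependence delivers the multi-soliton $u$ on $[T_0,+\infty)$.

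First I would bound the interaction remainder $S(t):=f(R(t))-\sum_j f(R_j(t))$. Each profile $\Phi_j$ satisfies $|\Phi_j(x)|+|\nabla\Phi_j(x)|\lesssim e^{-\mu|x|}$ for any $\mu<\sqrt{\w_j}$, as recalled in the excerpt, and the soliton centers $v_jt+x_j$ separate at speed at least $9v_\star$. The cross terms in $S$ are therefore products of two exponentially decaying bumps centered at points whose distance grows linearly in $t$; Claim \ref{cl:choicedirection} lets us arrange the velocity directions so that no point of $\Rd$ is simultaneously close to two different centers, producing the geometric deficit encoded in $\alpha$. Carrying this through yields $\nhu{S(t)}\le C_S\,e^{-\alpha\w_\star^{1/2}v_\star t}$, with $C_S$ depending only on the profiles.

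The heart of the argument is a bootstrap on $\nhu{w_n}$. Assume $\nhu{w_n(s)}\le 2e^{-\alpha\w_\star^{1/2}v_\star s}$ for all $s\in[t^\star,T_n]$. The perturbation $w_n$ satisfies
\[
i\partial_t w_n+\Delta w_n+\bigl(f(R+w_n)-f(R)\bigr)+S=0,\qquad w_n(T_n)=0,
\]
and since $e^{it\Delta}$ is an isometry of $\hu$, backward Duhamel gives
\[
\nhu{w_n(t)}\le\int_t^{T_n}\bigl(\nhu{(f(R+w_n)-f(R))(s)}+\nhu{S(s)}\bigr)ds.
\]
By (A2) the equation is $\hu$-subcritical, so the standard Moser-type estimate yields $\nhu{f(R+w_n)-f(R)}\le K_1\nhu{w_n}$ as long as $\nhu{w_n}\le 1$, which we ensure by taking $T_0$ large. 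Inserting the bootstrap hypothesis and the source bound,
\[
\nhu{w_n(t)}\le\frac{2K_1+C_S}{\alpha\w_\star^{1/2}v_\star}\,e^{-\alpha\w_\star^{1/2}v_\star t}.
\]
Setting $v_\sharp$ to be a constant dominating $(2K_1+C_S)/\w_\star^{1/2}$ and assuming $v_\star>\alpha^{-1}v_\sharp$ forces the prefactor to be strictly less than $1$; this improves the a priori assumption and closes the bootstrap on $[T_0,T_n]$.

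A diagonal extraction then provides a weak-$\hu$ limit $u_0$ of $u_{n_k}(T_0)$, and standard Cauchy theory for \eqref{eq:nls} promotes it into a solution $u\in\mathcal{C}([T_0,+\infty),\hu)$ that inherits the decay. The hardest point is conceptual rather than technical: the classical approach absorbs the linear-in-$w_n$ error via a coercive action-based Lyapunov functional, which is unavailable at an excited state. The replacement is a \emph{speed-for-spectrum trade}---provided $v_\star$ exceeds a threshold set by the $L^\infty$ and $\hu$ sizes of the $\Phi_j$'s, the linear contribution in the Duhamel integral decays at the same exponential rate as the source, so no spectral information on the linearizations is ever invoked. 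The other delicate ingredient is the geometric Claim \ref{cl:choicedirection}, which converts the multi-dimensional separation of the $v_j$'s into the single scalar decay rate $\alpha\w_\star^{1/2}v_\star$.
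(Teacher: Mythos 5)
Your overall scaffolding (backward terminal data $u_n(T_n)=R(T_n)$, a bootstrap giving uniform exponential decay on $[T_0,T_n]$, then compactness) matches the paper, but the core of your bootstrap rests on a step that fails under (A1)--(A2). You close the estimate by a Duhamel bound in $\hu$ using the claim $\nhu{f(R+w_n)-f(R)}\le K_1\nhu{w_n}$. No such Moser/Lipschitz estimate holds for a general $H^1$-subcritical nonlinearity: writing out $\nabla\bigl(f(R+w)-f(R)\bigr)$ produces the term $\bigl(df(R+w)-df(R)\bigr).\nabla w$, and by (A2) one only has $|df(z)|\lesssim 1+|z|^{p-1}$, so one must bound $\| \, |w|^{p-1}\nabla w\|_{\ld}$; for $w\in\hu$ and $d\ge 3$ this requires $w\in L^{(p-1)d}(\Rd)$, which fails once $p>1+\tfrac{2}{d-2}$ (and for $1<p<2$ the map $df$ is merely H\"older continuous, so even the best available bound degrades the decay of this term to $e^{-(p-1)\alpha\w_\star^{1/2}v_\star t}$, which is slower than the target rate and does not close the bootstrap). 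This derivative loss is precisely why $H^1$ well-posedness of subcritical NLS requires Strichartz estimates, and it is why the authors do \emph{not} run the high-speed Gronwall argument in $\hu$: if your estimate were available, the entire localized-action machinery of the paper would be superfluous.

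What the paper actually does is a two-tier argument. The ``speed beats the constant'' trick is applied only at the $\ld$ level: differentiating $\nldd{v}$ kills the Laplacian (since $\Re\int i|\nabla v|^2\,dx=0$), the remaining linear term is bounded by a constant $C_{\mathcal L}$ depending only on $g$ and $\|R\|_{L^\infty(\Rd)}$, and the nonlinear term is absorbed using the $\hu$ bootstrap hypothesis as an input; integrating from $T_n$ then yields $\nld{v(t)}\le (2mK)^{-1/2}e^{-\alpha\w_\star^{1/2}v_\star t}$ with a prefactor as small as desired once $v_\star$ is large (Lemma~\ref{lem:step1}). The $\hu$ control is then recovered by a completely different mechanism: coercivity of the localized linearized action $\mathcal H$ modulo finitely many $\ld$ directions (Lemmas~\ref{lem:coercivity-H-0} and~\ref{lem:coercivity-global}), combined with the almost-conservation of the localized action $\mathcal S$ (Lemma~\ref{lem:estimate-dS}); the bad directions are exactly what the $\ld$ smallness controls. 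Your proposal contains none of this, so the $\hu$ bound is unproved. Two smaller points: Claim~\ref{cl:choicedirection} is not needed to estimate the interaction term $f(R)-\sum_j f(R_j)$ (pairwise separation at speed $9v_\star$ already gives exponential smallness); it is needed to set up the one-dimensional moving cut-offs $\psi_j$ in the localized conservation laws. And at $t=T_0$ a weak $\hu$ limit is not enough to pass to the limit in the flow; the paper proves strong $\ld$ compactness of $u_n(T_0)$ via a localized mass argument (Claim~\ref{cl:compactness}) and then invokes well-posedness in $H^\s(\Rd)$ for some $\s<1$.
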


We now turn to the non-uniqueness and instability of a multi-soliton.

Assume that the flow around one of the $R_j$ is linearly unstable, i.e. has
an eigenvalue off the imaginary axis. As the $R_j$ all play the same role,
we can assume it is $R_1$. 
\begin{itemize}
\item[(A4)] $L = -i\Delta +i\w_1 - idf(\Phi_1)$ has an eigenvalue
  $\lambda \in \m C$ with $\rho := \Re (\lambda) >0$. 
\end{itemize}
This assumption is very natural if one expects $R_1$ to be
unstable. Actually, (A4) holds for any real radial bound state in the $L^2$-supercritical
case (see \cite{Gr88}). For excited states, (A4) is believed to hold for a wide class of
 non-linearities.

Under assumption (A4), we are able to construct a one parameter family of  solutions to \eqref{eq:nls} that converge to the soliton $R_1$ as time goes to infinity, as described in the following Theorem. 

\begin{thm} \label{thm:2}
Take $\w_1>0$,
$\g_1\in\R$, $v_1\in\Rd$, $x_1\in\Rd$ and
$\Phi_1\in\hu$ a solution of \eqref{eq:snls} (with $\w_0$ replaced by $\w_1$). Set 
\[
R_1(t,x)=R_{\Phi_1,\w_1,\g_1,v_1,x_1}(t,x):=\Phi_1(x-v_1t-x_1)e^{i(\frac{1}{2}v_1\cdot
  x-\frac{1}{4}|v_1|^2t+\w_1t+\g_1)}.
\]
Assume $g$ is $\mathcal C^\infty$ and (A1)-(A4) are satisfied. 

There exists a function $Y(t)$ such that $\nhu{Y(t)}\leq Ce^{-\rho t}$ and $e^{\rho t} \| Y(t) \|_{H^1}$ is non-zero and periodic (here $\rho$ is given by (A4) and $Y(t)$ is actually a solution to the linearized flow around $R_1$, see \eqref{def:Y},  \eqref{eq:Ylinflow}). For all $a \in \m
R$, there exist $T_0\in\R$ large enough, a solution $u_a$ to \eqref{eq:nls}
defined on $[T_0,+\infty)$, and a constant $C>0$ such that 
\[ \forall t \ge T_0, \quad \| u_a(t) - R_1(t) - a Y(t) \|_{H^1(\Rd)} \le C
e^{-2\rho t}. \] 
\end{thm}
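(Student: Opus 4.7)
\emph{Strategy and construction of $Y$.} The plan is to follow a classical backward center--stable manifold construction (in the spirit of Duyckaerts--Merle, Combet, or C\^ote--Martel--Merle for supercritical \eqref{eq:nls}): build $Y$ from the unstable eigenfunction of $L$, define approximate solutions $u_n$ by prescribing terminal data at times $T_n \to +\infty$, and close a uniform bootstrap estimate using the spectral decomposition of $L$. The operator $L$ enjoys Hamiltonian-type symmetries: if $\lambda \in \Sp(L)$ then $\bar\lambda, -\lambda, -\bar\lambda \in \Sp(L)$ as well. Let $\phi \in H^1$ satisfy $L\phi = \lambda\phi$ with $\lambda = \rho + i\sigma$. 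In the phase/Galilean frame where $R_1$ becomes the stationary profile $\Phi_1$, define
\[
\tilde Y(t) := \Re\bigl(e^{-\lambda t}\phi\bigr) = e^{-\rho t}\bigl(\cos(\sigma t)\,\Re\phi + \sin(\sigma t)\,\Im\phi\bigr),
\]
so that $\partial_t \tilde Y + L\tilde Y = 0$, $\nhu{\tilde Y(t)} \le Ce^{-\rho t}$, and $e^{\rho t}\nhu{\tilde Y(t)}$ is periodic of period $2\pi/|\sigma|$ (or constant if $\sigma=0$). Transporting $\tilde Y$ back through the inverse change of frame yields the desired $Y$.

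\emph{Approximation scheme and error equation.} Fix $a \in \R$ and a sequence $T_n \to +\infty$, and let $u_n$ be the solution of \eqref{eq:nls} with terminal data $u_n(T_n) = R_1(T_n) + aY(T_n)$. Setting $w_n := u_n - R_1 - aY$ and using that $R_1$ solves \eqref{eq:nls} while $Y$ solves the linearized flow around $R_1$, one finds
\[
i\partial_t w_n + \Delta w_n + df(R_1) w_n = -\mathcal N(w_n + aY),
\]
where $\mathcal N(\eta) := f(R_1 + \eta) - f(R_1) - df(R_1)\eta$ is at least quadratic (the $\mathcal C^\infty$ hypothesis on $g$ provides the required smoothness). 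The crux of the proof is the uniform a priori estimate
\[
\nhu{w_n(t)} \le C e^{-2\rho t} \quad \text{on } [T_0, T_n],
\]
with $T_0$ and $C$ independent of $n$; given this, weak $H^1$-compactness together with local well-posedness produces $u_a$ as a subsequential limit of $u_n$, and the estimate passes to the limit.

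\emph{Bootstrap via spectral projectors and main obstacle.} Working in the moving frame, decompose $H^1 = X^+ \oplus X^0 \oplus X^-$ into the $L$-invariant subspaces associated to eigenvalues with positive, zero, and negative real parts. The central subspace $X^0$ contains the symmetry directions and is eliminated by modulating the corresponding soliton parameters. Under the bootstrap hypothesis $\nhu{w_n(t)} \le C^\ast e^{-2\rho t}$, the source satisfies
\[
\nhu{\mathcal N(w_n + aY)} \lesssim \nhu{aY}^2 + \nhu{w_n}^2 = O(e^{-2\rho t}).
\]
Writing Duhamel backward from $T_n$ (where $w_n(T_n) = 0$) on each spectral component and using the exponential bounds for the linearized semigroup yields an improved control of the form $\tfrac12 C^\ast e^{-2\rho t}$ provided $T_0$ is large enough, closing the bootstrap. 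The delicate point is the direction $X^-$: a forward-in-time scheme would amplify any error by $e^{\rho t}$, so one is forced to set the construction up backward from $T_n$ with $w_n^-(T_n) = 0$. The specific choice of terminal data $R_1 + aY$ is what ensures this cancellation and moreover absorbs the unique slowly-decaying linear mode on $X^+$, so that the leading source becomes the quadratic self-interaction $\mathcal N(aY) \sim e^{-2\rho t}$, which fixes the decay rate announced in the theorem.
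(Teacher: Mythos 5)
Your construction of $Y$ from the unstable eigenfunction agrees with the paper's (see \eqref{def:Y}, \eqref{eq:Ylinflow}), but the core of your argument — the backward bootstrap closed ``on each spectral component using the exponential bounds for the linearized semigroup'' after splitting $H^1=X^+\oplus X^0\oplus X^-$ — rests on linear estimates that are simply not available here, and avoiding them is the whole point of the paper's proof. For the linearization $L$ around an \emph{excited} state one does not know that the generalized kernel is spanned by the symmetry directions (so modulation cannot be guaranteed to exhaust $X^0$), one does not know the absence of embedded eigenvalues or resonances on the essential spectrum $\{iy,\ |y|\geq\w_1\}$, and consequently one has no bound — not even polynomial — on the restriction of $e^{-tL}$ to the continuous spectral subspace. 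Your Duhamel estimate on that component is therefore unjustified; this is exactly the obstruction the authors flag when they say the linearized flow around excited states is much less understood than around a ground state. The paper sidesteps it entirely: it first constructs an approximate profile $U_1^{N_0}=R_1+e^{i\w_1 t}W^{N_0}$ solving \eqref{eq:nls} up to an error $O(e^{-(N_0+1)\rho t})$ for \emph{arbitrary} $N_0$ (Proposition~\ref{prop:profile}, which needs only the resolvent of $L$ at the points $k\rho+ij\theta\notin\Sp(L)$, $k\ge 2$, plus the exponential decay of the eigenfunction proved in \ref{ap:decay}), and then runs a contraction for the remainder in the weighted space $X^{\sigma}_{T_0,N_0}$ using the \emph{free} group $e^{i\Delta(t-\tau)}$, which is an isometry on $H^\sigma(\Rd)$. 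There the full difference $f(U_1^{N_0}+w)-f(U_1^{N_0})$ (linear part included) is treated as a Lipschitz term with constant $K_\sigma$, and the contraction closes because $K_\sigma/((N_0+1)\rho)\le\tfrac12$ once $N_0$ is large: the high order of the profile substitutes for the missing semigroup decay. Without the profile, your source is only $O(e^{-2\rho t})$ and there is no analogous smallness to exploit.

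Two further points would need repair even granting a spectral decomposition. First, your quadratic bound $\nhu{\mathcal N(w_n+aY)}\lesssim \nhu{aY}^2+\nhu{w_n}^2$ fails in general for $d\ge 2$, since $H^1(\Rd)$ is not an algebra; the paper works in $H^\sigma(\Rd)$ with $\sigma>d/2$ (using the $\mathcal C^\infty$ hypothesis on $g$) and recovers the $H^1$ statement afterwards. Second, your terminal data $u_n(T_n)=R_1(T_n)+aY(T_n)$ only cancels $w_n(T_n)$, not the projection of $w_n$ onto the growing-backward directions at earlier times, so on $X^+$ (eigenvalues of real part in $[\rho,\ \text{max}]$) one must still check that $\int_t^{T_n}e^{\rho(s-t)}e^{-2\rho s}\,ds\lesssim e^{-2\rho t}$ uses that $\rho$ is the \emph{maximal} real part of the unstable spectrum — a fact the paper establishes in Proposition (i) of Section~\ref{sec:profile} and which you should state explicitly, since otherwise a faster unstable mode would destroy the rate $e^{-2\rho t}$.
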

In particular, Theorem~\ref{thm:2} implies that the soliton $R_1$ is orbitally unstable, as precised in the following corollary.

\begin{cor}\label{cor1}
Under the hypotheses of Theorem~\ref{thm:2}, $R_1$ is orbitally unstable in the following sense. Let $\sigma\geq0$. There exist $\varepsilon>0$, $(T_n)\subset(0,+\infty)$, $(u_{0,n})\subset\hu$ and solutions $(u_n)$ of \eqref{eq:nls} defined on  $[0,T_n]$ with $u_n(0)=u_{0,n}$ such that
\[
\lim_{n\to+\infty}\|{u_{0,n}-R_1(0)}\|_{H^\sigma(\Rd)}=0 \mbox{ and }\inf_{y\in\Rd,\vartheta\in\R}\nld{u_n(T_n)-e^{i\vartheta}\Phi_1(\cdot-y)}\geq\varepsilon\mbox{ for all }n\in\N.
\]
\end{cor}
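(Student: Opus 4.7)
The plan is to deduce Corollary~\ref{cor1} from Theorem~\ref{thm:2} by means of the time-reversal symmetry of \eqref{eq:nls}: because $f(z)=g(|z|^2)z$ satisfies $\overline{f(z)}=f(\bar z)$, the map $u(t,x)\mapsto\overline{u(-t,x)}$ sends solutions of \eqref{eq:nls} to solutions. The family $\{u_a\}$ from Theorem~\ref{thm:2} converges to $R_1$ as $t\to+\infty$ along the \emph{decaying} direction $Y$; time-reversing (and realigning via the phase, translation and Galilean symmetries) yields solutions that start near $R_1(0)$ but carry a perturbation in the \emph{growing} direction of the linearised flow, which has reached a fixed amplitude at a later time and thereby escapes the orbit $\{e^{i\vartheta}\Phi_1(\cdot-y)\}$.

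First reduce, via the Galilean, translation and phase invariances of \eqref{eq:nls}, to the case $v_1=0$, $x_1=0$, $\g_1=0$ and $\Phi_1$ real (the complex-$\Phi_1$ case is handled by using $\overline{\Phi_1}$ as an auxiliary bound state, which still satisfies (A4) since $\Sp(\overline{L})=\overline{\Sp(L)}$). In this reduction $\overline{R_1(-t,x)}=R_1(t,x)$. Fix $a\neq 0$ and $T_0$ so that Theorem~\ref{thm:2} applies, set $\tau_n:=n+T_0$, $T_n:=n$, and define
\[
u_n(t,x):=\overline{u_a(\tau_n-t,x)}\,e^{i\w_1\tau_n},\qquad t\in[0,T_n],
\]
a solution of \eqref{eq:nls} by combining time-reversal with phase invariance. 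The expansion of Theorem~\ref{thm:2} and the identity $\overline{R_1(\tau_n)}e^{i\w_1\tau_n}=R_1(0)$ give
\[
u_n(0)=R_1(0)+\overline{aY(\tau_n)}\,e^{i\w_1\tau_n}+\mathcal{O}_{\hu}(e^{-2\rho\tau_n}),\quad \nhu{u_n(0)-R_1(0)}\leq C|a|e^{-\rho\tau_n}\to 0;
\]
the same decay holds in $H^\sigma(\Rd)$ for every $\sigma\geq 0$, thanks to the $\mathcal{C}^\infty$-smoothness of $g$ and the induced regularity of $\Phi_1$, $u_a$ and $Y$. Analogously at the final time,
\[
e^{-i\w_1T_n}u_n(T_n)=\Phi_1+a\overline{Y(T_0)}\,e^{i\w_1T_0}+\mathcal{O}_{\hu}(e^{-2\rho T_0})=:\Psi_a,
\]
a \emph{fixed} element of $\hu$ independent of $n$. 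Since the orbit $\{e^{i\vartheta}\Phi_1(\cdot-y)\}$ is phase-invariant, the corollary reduces to showing $\inf_{y,\vartheta}\nld{\Psi_a-e^{i\vartheta}\Phi_1(\cdot-y)}>0$.

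For the transversality, $Y(T_0)$ lies in the spectral subspace of $L$ associated with $\lambda$ (of real part $\rho>0$) and is nonzero by Theorem~\ref{thm:2}, so $Y(T_0)\notin\ker L$; a direct calculation shows $\mathrm{span}_{\R}\{i\Phi_1,\partial_{x_1}\Phi_1,\dots,\partial_{x_d}\Phi_1\}\subset\ker L$, and this span is exactly the tangent space at $\Phi_1$ to the smooth $(d+1)$-dimensional submanifold $\{e^{i\vartheta}\Phi_1(\cdot-y)\}\subset\ld$. Adjusting $T_0$ slightly to avoid the measure-zero set of phases where the rotation $e^{i\w_1T_0}$ could accidentally bring $\overline{Y(T_0)}$ into $\ker L$, the real $\ld$-orthogonal projection of $\overline{Y(T_0)}e^{i\w_1T_0}$ onto the orbit's normal bundle at $\Phi_1$ is nonzero, of norm $\geq ce^{-\rho T_0}$ for some $c>0$. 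For $|a|$ small and $T_0$ large, the implicit function theorem applied to the tubular neighborhood of the orbit then yields
\[
\inf_{y\in\Rd,\vartheta\in\R}\nld{\Psi_a-e^{i\vartheta}\Phi_1(\cdot-y)}\geq c'|a|e^{-\rho T_0}-Ce^{-2\rho T_0}=:\varepsilon>0,
\]
which completes the proof. The main obstacle is precisely this transversality step: in the non-self-adjoint setting one must control, in $\ld$, the projection of the unstable eigen-mode onto the orbit's normal bundle after the phase rotation by $e^{i\w_1T_0}$; this rests on the spectral decomposition of the Hamiltonian operator $L$, while all the remaining steps are routine manipulations of the symmetries of \eqref{eq:nls}.
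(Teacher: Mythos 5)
Your proposal is correct and follows essentially the same route as the paper: time-reverse and conjugate the one-parameter family of Theorem~\ref{thm:2} (applied to $\bar\Phi_1$ when $\Phi_1$ is genuinely complex, exactly as you note parenthetically — this is not a symmetry reduction to real $\Phi_1$, but the device works), so that the solutions start exponentially close to $R_1(0)$ and end, up to a phase, at a fixed profile $\Phi_1 + a\overline{Y(T_0)}e^{i\w_1 T_0} + O(e^{-2\rho T_0})$, and then exploit that the tangent space of the orbit $\{e^{i\vartheta}\Phi_1(\cdot-y)\}$ lies in $\ker L$ while the $Y$-perturbation does not. The only difference is in packaging: the paper gets the final lower bound by a soft argument (the infimum over the orbit is attained and cannot vanish, with $t_0$ adjusted modulo $2\pi/\w_1$ and $2\pi/\theta$ so that no phase rotation acts on $Y$), whereas you sketch a quantitative projection onto the normal bundle with an ``adjust $T_0$ off a discrete bad set'' step — both rest on the same transversality fact, and your quantitative version needs the evaluation time large enough relative to the fixed $a\neq 0$ so that $c'|a|e^{-\rho T}$ dominates $Ce^{-2\rho T}$.
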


From Theorem~\ref{thm:2} we infer the existence of a one parameter family of multi-solitons. As a corollary, we obtain non-uniqueness
and instability for high relative speeds multi-solitons. 

\begin{thm} \label{thm:3}
Let $N\in\N\setminus\{0,1\}$, and for $j=1,...,N$ take $\w_j>0$,
$\g_j\in\R$, $v_j\in\Rd$, $x_j\in\Rd$ and
$\Phi_j\in\hu$ a solution of \eqref{eq:snls} (with $\w_0$ replaced by $\w_j$). Set 
\[
R_j(t,x)=R_{\Phi_j,\w_j,\g_j,v_j,x_j}(t,x):=\Phi_j(x-v_jt-x_j)e^{i(\frac{1}{2}v_j\cdot
  x-\frac{1}{4}|v_j|^2t+\w_jt+\g_j)}.
\]
Let $
v_\star:=\frac{1}{9}\min\left\{|v_{j}-v_k|;j,k=1,...,N,j\neq k
\right\}. 
$
Assume $g$ is $\mathcal{C}^\infty$ and (A1)-(A4) are satisfied. 

There exists $v_\natural := v_\natural(\Phi_1, \dots, \Phi_N)>0$ such that if $v_\star>v_\natural$ then the following holds.\\
There exists a function $Y(t)$ such that $\nhu{Y(t)}\leq Ce^{-\rho t}$ and $e^{\rho t} \| Y(t) \|_{H^1}$ is non-zero and periodic (here $\rho$ is given by (A4) and $Y(t)$ is actually a solution to the linearized flow around $R_1$, see \eqref{def:Y},  \eqref{eq:Ylinflow}).
 For all $a \in \m
R$, there exist $T_0\in\R$ large enough, a solution $u_a$ to \eqref{eq:nls}
defined on $[T_0,+\infty)$, and a constant $C>0$ such that 
\[ \forall t \ge T_0, \quad \| u_a(t) - \sum_{j=1}^N R_j(t) - a Y(t) \|_{H^1(\Rd)} \le C
e^{-2\rho t}. \] 
\end{thm}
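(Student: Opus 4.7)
\medskip

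\noindent\emph{Proof plan for Theorem \ref{thm:3}.} The idea is to combine the multi-soliton construction of Theorem \ref{thm:1} with the unstable-mode construction of Theorem \ref{thm:2}. The approximate solution I would work with is
\[
U_a(t) := \sum_{j=1}^N R_j(t) + a\,Y(t),
\]
where $Y(t)$ is the $H^1$-exponentially decaying solution of the linearized flow around $R_1$ produced in Theorem \ref{thm:2}. Since $Y(t)$ is (up to modulation) a localized profile riding on the trajectory of $R_1$, and the $R_j$ for $j\ne 1$ are supported essentially near $x_j+v_j t$, the ``cross'' interactions $R_j \cdot Y$, $j\ne 1$, decay at least like $e^{-c\sqrt{\omega_\star}\,v_\star t}$ by the exponential localization recalled after (A3). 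Plugging $U_a$ into \eqref{eq:nls}, the error $i\partial_t U_a + \Delta U_a + f(U_a)$ splits into three pieces: soliton--soliton interactions from the nonlinearity applied to the sum $R$ alone, of size $e^{-c\sqrt{\omega_\star}\,v_\star t}$ exactly as in Theorem \ref{thm:1}; a quadratic-and-higher residue from expanding $f(R_1 + aY)$ around $R_1$, of size $e^{-2\rho t}$ as in Theorem \ref{thm:2}; and the cross interactions $Y \leftrightarrow R_j$ for $j\ne 1$, which are of order $e^{-c\sqrt{\omega_\star}\,v_\star t}$. Choosing $v_\natural$ large enough to ensure $c\sqrt{\omega_\star}\,v_\star > 2\rho$ makes the target rate $e^{-2\rho t}$ the dominant one.

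The construction is then the standard backwards-in-time compactness scheme. Fix $a\in\mathbb R$, pick a sequence $T_n\to +\infty$, and solve \eqref{eq:nls} backwards from data of the form $u_n(T_n) = U_a(T_n) + h_n$, where $h_n$ lies in the finite-dimensional subspace spanned by the unstable directions of $L$ (as in the proof of Theorem \ref{thm:2}) and will be selected through a topological/Brouwer argument. The aim is a uniform bound
\[
\|u_n(t) - U_a(t)\|_{H^1(\R^d)} \le C\,e^{-2\rho t}
\quad\text{for all } t\in [T_0, T_n],
\]
with constants independent of $n$. Once this is available, the family $\{u_n\}$ is precompact in $H^1_{\mathrm{loc}}$ (by local smoothing and Strichartz), and any weak limit $u_a$ inherits the desired bound on $[T_0,+\infty)$.

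For the uniform estimate I would run the same modulation + localized-in-space Lyapunov functional used in the proof of Theorem \ref{thm:1} (the high-speed argument replacing coercivity, which fails for excited states). Writing $w := u_n - U_a$, one derives a differential inequality for a suitable quantity equivalent to $\|w\|_{H^1}^2$, built from localized mass and momentum densities cut off on Voronoi-type regions separating the $R_j$, together with a piece of localized energy. The source terms are precisely those three errors above, hence bounded by $e^{-2\rho t}$ after imposing $v_\star > v_\natural$. Modulation must now absorb the usual symmetry parameters of each soliton and, additionally, project $w$ onto the stable part of the spectrum of $L$ around $R_1$; the unstable component, which cannot be controlled by the Lyapunov functional, is handled by adjusting $h_n$ via a topological shooting argument, exactly as in Theorem \ref{thm:2}, so that the unstable projection at time $t$ stays of size $\le C e^{-2\rho t}$.

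The main obstacle is coupling these two mechanisms: the multi-soliton estimates of Theorem \ref{thm:1} rely on separating the dynamics of each $R_j$ via spatial localization, while the unstable-mode control of Theorem \ref{thm:2} is inherently spectral and concentrated near $R_1$. One must check that the modulation parameters attached to $R_1$, the $H^1$-projection onto the generalized eigenspaces of $L$, and the localized Lyapunov functional are compatible (i.e.\ that cross-terms produced by the spatial cut-off acting on the unstable/stable eigenfunctions are of order $e^{-c\sqrt{\omega_\star} v_\star t}$, hence negligible under $v_\star > v_\natural$). Once this compatibility is verified, the rest is a routine combination of the arguments from the proofs of Theorems \ref{thm:1} and \ref{thm:2}.
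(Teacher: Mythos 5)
Your plan is a genuinely different route from the paper's, and as written it has gaps that I do not think can be repaired without essentially reinventing the paper's argument. The paper does not use modulation, localized Lyapunov functionals, Voronoi cut-offs, or a topological shooting argument anywhere in the proof of Theorem~\ref{thm:3}. Instead it first builds (Proposition~\ref{prop:profile}) an approximate solution $U_1^{N_0}=R_1+e^{i\omega t}W^{N_0}$ around the \emph{single} soliton $R_1$, solving the linearized equation order by order in powers of $e^{-\rho t}$ up to an arbitrary order $N_0$, so that the self-generated error is $O(e^{-(N_0+1)\rho t})$; it then runs a plain contraction mapping for $w=u-\bigl(U_1^{N_0}+\sum_{j\ge2}R_j\bigr)$ in the weighted space $X^\sigma_{T_0,N_0}$ with norm $\sup_t e^{(N_0+1)\rho t}\|w(t)\|_{H^\sigma}$, $\sigma>d/2$. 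The contraction constant is $K_\sigma/((N_0+1)\rho)$, made small by taking $N_0$ large; the sole role of the high-speed hypothesis $v_\star>v_\natural$ is to force the soliton--profile cross terms to decay at rate $2\alpha\sqrt{\omega_\star}\,v_\star\ge(N_0+1)\rho$ so that they fit in the same space. No spectral projection of the perturbation is ever needed.

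The concrete gaps in your proposal are the following. First, you keep only the first-order ansatz $\sum_j R_j+aY$, whose intrinsic error (the quadratic residue of $f(R_1+aY)$ around $R_1$) is already of size $e^{-2\rho t}$ --- exactly the target size. There is then no small parameter with which to close either a Duhamel estimate (the integral $\int_t^\infty e^{-2\rho s}\,ds$ only gains the fixed factor $1/(2\rho)$, which need not beat the Lipschitz constant of $f$) or a Gronwall/energy estimate. This is precisely why the profile must be constructed to arbitrary order $N_0$. Second, the mechanism by which Theorem~\ref{thm:1} controls the finitely many non-coercive directions (Lemma~\ref{lem:step1}) integrates $|\frac{d}{dt}\|v\|_{L^2}^2|\le C_{\mathcal L}e^{-2\alpha\sqrt{\omega_\star}v_\star t}$ and gains the factor $C_{\mathcal L}/(\alpha\sqrt{\omega_\star}v_\star)$, which is small only because the decay rate is \emph{tunable} via $v_\star$. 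At the fixed rate $2\rho$ dictated by the spectrum, that gain disappears, so the Theorem~\ref{thm:1} machinery does not transplant to this setting. Third, your topological shooting argument to control the unstable component presupposes a stable/unstable/center decomposition of the linearized flow around $R_1$ with coercivity on the complement and control of the generalized kernel; for an excited state with a possibly non-real unstable eigenvalue $\lambda=\rho+i\theta$, none of this spectral information is available --- which is the stated reason the paper avoids any such decomposition. (Incidentally, Theorem~\ref{thm:2} in the paper is also proved by a fixed point, not by a Brouwer argument, so there is no shooting lemma to borrow from it.)
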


\begin{rmk}
Notice that, in Theorem \ref{thm:3}, if for $a,b\in\R$ we have $a\neq b$, then $u_a\not\equiv u_b$. Indeed, for $t$ large enough we have
\[
\nhu{u_a(t)-u_b(t)}\geq |a-b|\nhu{Y(t)}-2Ce^{-2\rho t}.
\]
Since $e^{\rho t} \| Y(t) \|_{H^1}$ is non-zero and periodic, this implies that $u_a\not\equiv u_b$ if $a\neq b$.
\end{rmk}

\begin{cor}\label{cor2}
Under the hypotheses of Theorem \ref{thm:3}, the following instability property holds.
Let $\sigma \ge 0$, there exists $\e >0$, such that
for all $n\in\N\setminus\{0\}$ and for all $T \in \m R$ the following holds. There exists
$I_n, J_n \in \m R$, $T \le I_n < J_n$  and a solution $w_n \in  \q C([I_n, J_n], H^1(\m R^d))$ to \eqref{eq:nls} such that
\[ 
\lim_{n\to+\infty}\| w_n(I_n) - R(I_n) \|_{H^\sigma(\m R^d)} =0, \quad \text{and} \quad \inf_{\substack{ y_j\in\Rd, \vartheta_j\in\R,\\j=1,...,N}} \| w_n(J_n) - \sum_{i=1}^N \Phi_j(x-y_j) e^{i (\frac{1}{2}v_j\cdot
  x+\vartheta_j)} \|_{L^2(\m R^d)} \ge \e. 
\]
\end{cor}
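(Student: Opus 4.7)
The plan is to derive Corollary \ref{cor2} from the single-soliton instability of Corollary \ref{cor1} applied to $R_1$, combined with the large-speed regime $v_\star>v_\natural$: a localised perturbation placed near the unstable soliton $R_1$ should evolve on a suitable time window essentially independently of the other components $R_j$ ($j\neq 1$), because the latter drift away at relative speed $\ge v_\star$ and the profiles $\Phi_j$ are exponentially decaying. The instability of $R_1$ then transfers to $R$.

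Fix $T\in\R$ and $n\in\N\setminus\{0\}$. Using Corollary \ref{cor1} together with the Galilean and space–time translation invariances of \eqref{eq:nls}, I would first obtain a constant $\varepsilon_0>0$ independent of $n$ such that, for every $\delta_n\to 0$ and every $I_n\ge T$ large enough, there is $\chi_n\in H^\sigma(\Rd)$ with $\|\chi_n\|_{H^\sigma}=\delta_n$, compactly supported near the centre $v_1 I_n+x_1$ of $R_1(I_n)$, for which the NLS solution $\tilde v_n$ starting from $R_1(I_n)+\chi_n$ satisfies, at some time $J_n=I_n+\tau_n$ with $\tau_n=O(\log\delta_n^{-1})$,
\[
\inf_{y\in\Rd,\vartheta\in\R}\bigl\| \tilde v_n(J_n)-e^{i(\frac{1}{2}v_1\cdot x+\vartheta)}\Phi_1(\cdot-y)\bigr\|_{L^2}\ge\varepsilon_0.
\]
I would then take $w_n$ to be the solution of \eqref{eq:nls} with initial data $w_n(I_n):=R(I_n)+\chi_n$, so that $\|w_n(I_n)-R(I_n)\|_{H^\sigma}=\delta_n\to 0$, which gives the first half of the conclusion.

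The heart of the argument is a decoupling estimate on $[I_n,J_n]$ of the form
\[
\Bigl\| w_n(t)-\tilde v_n(t)-\sum_{j=2}^N R_j(t)\Bigr\|_{L^2}\le\eta_n,\qquad \eta_n\to 0.
\]
Its validity rests on the spatial separation $\ge v_\star t$ between the support region of $\tilde v_n(t)$ (which tracks $R_1(t)$ up to a localised growing perturbation) and the supports of the $R_j(t)$ for $j\neq 1$, together with the exponential decay of the profiles. Once this estimate is in hand, a spatial cut-off argument around each of the $N$ soliton centres at $J_n$, pairwise separated by at least $v_\star J_n$, transfers the $L^2$ lower bound from $\tilde v_n(J_n)$ to $w_n(J_n)$ and gives
\[
\inf_{y_j\in\Rd,\vartheta_j\in\R}\Bigl\| w_n(J_n)-\sum_{j=1}^N\Phi_j(\cdot-y_j)e^{i(\frac{1}{2}v_j\cdot x+\vartheta_j)}\Bigr\|_{L^2}\ge\varepsilon:=\varepsilon_0/2,
\]
which is the desired conclusion.

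The main obstacle will be the decoupling estimate above: one must quantitatively control the nonlinear interaction between the perturbed $R_1$ component and the unperturbed $R_j$'s over a window of length $\tau_n=O(\log\delta_n^{-1})$. Since the spatial separation grows linearly in $t$ while $\tau_n$ only grows logarithmically in $\delta_n^{-1}$, it is reasonable to expect that fixing $\delta_n$ first and then choosing $I_n$ sufficiently large makes the interaction arbitrarily small, in the same well-separated-solitons spirit as the estimates underlying Theorem \ref{thm:1}.
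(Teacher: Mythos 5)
Your proposal is correct and follows essentially the same route as the paper: perturb the first soliton using the unstable data from Corollary~\ref{cor1}, glue on $\sum_{j\ge 2}R_j$, prove a decoupling estimate by Gr\"onwall with the crucial quantifier order (fix $n$, hence the window length $T_n$ and the constant $e^{CT_n}$, then push $I_n\to+\infty$ so the interaction term $\eta(I_n)$ beats it), and transfer the $L^2$ lower bound by spatial localization. Two details where the paper proceeds slightly differently: it does not truncate to a compactly supported $\chi_n$ (which would require an extra Gr\"onwall argument to justify) but instead uses the $H^\sigma(\Rd)$-compactness of the trajectory $\{u_n(t): t\in[0,T_n]\}$ to obtain the uniform spatial localization needed for the interaction bound, and in the final step the infimum over all placements of the $N$ profiles is controlled by showing that any near-minimizer must put $y_j$ within $O(1)$ of $x_j(J_n)$ for $j\ge 2$, up to permutations of identical profiles.
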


\begin{rmk}
The fact that instability holds backward in time (i.e. with $J_n<I_n$) is an easy consequence of Theorem \ref{thm:3}. Hence the difficulty in Corollary  \ref{cor2} is to prove instability forward in time.
\end{rmk}

\begin{rmk}
The classification of multi-solitons is now complete for the generalized Korteweg-de Vries equations (see \cite{Co10,Ma05} and the references therein). In particular, uniqueness holds in the subcritical and critical cases, whereas in the supercritical case the set of multi-solitons consists in a $N$-parameters family.
To the authors knowledge, no uniqueness nor classification result is available yet for multi-solitons of nonlinear Schrödinger equations.
\end{rmk}

\subsection*{Scheme of proofs and comments}

Our strategy for the proof of the existence result (Theorem~\ref{thm:1}) is
inspired from the works \cite{CoMaMe09,MaMe06,Me90}: we take a sequence of time
$T_n\to+\infty$ and a set of final data $u_n(T_n)=R(T_n)$. Our goal is to prove that the solutions $u_n$ to \eqref{eq:nls} backwards in time (which approximate a multi-soliton) exist up to some time $T_0$ independent of $n$, and enjoy uniform $H^1(\m R^d)$ decay estimates on $[T_0,T_n]$. 
A compactness argument then shows that $(u_n)$ converges to a multi-soliton solution to
\eqref{eq:nls} defined on $[T_0,+\infty)$.  

As in \cite{CoMaMe09,MaMe06}, the
uniform backward $\hu$-estimates rely on slow variation
of localized conservation laws as well as coercivity of the Hessian
of the action around each component of the multi-soliton. 
However, this Hessian has negative ``bad  directions'' on which it is not coercive. When dealing with ground states,
these were ruled out either by modulation and conservation of the mass (as in
\cite{MaMe06}) or with the help of explicit knowledge of eigenfunctions of
the operator corresponding to the linearization of \eqref{eq:nls} around a
soliton (as in \cite{CoMaMe09}). In both cases, this could be done
only because of the knowledge of precise spectral properties for ground
states; this does no longer hold when dealing with the more general case of excited states.

Our remark is that the Hessian fails to be $\hu$-coercive only up to a $\ld$-scalar
product with the bad directions. Hence the first step in our analysis is to
find  uniform $\ld$-backward estimates without the help of the
Hessian. This rules out the ``bad directions'' and we can now take advantage of
the coercivity of the Hessian to obtain the $\hu$-estimates.
The main drawback of our approach is that the bootstrap of the $\ld$-estimates requires that the soliton components are
well-separated. Thus we have to work with high-speed solitons. 

To obtain the one parameter family of Theorem~\ref{thm:2}, we rely on a
fixed point argument for smooth functions exponentially convergent (in
time). This is possible because we now assume smoothness on the
non-linearity. The main difficulty is to construct a very good approximate
solution to the multi-soliton. Actually we build such a profile at
arbitrary exponential order. This method is inspired by 
\cite{DuMe07,DuMe08,DuMe09,DuRo} in the case of a single ground state, for the
nonlinear wave or Schrödinger equations. It was also recently developed by
Combet \cite{Co09,Co10} for multi-solitons in the context of the
$L^2$-supercritical generalized Korteweg-de Vries equation. 

However, an important difference in our case is that we consider excited
states, and the linearized flow around them is much less understood than
that around a ground state soliton. For example,
to our knowledge, the exponential decay of eigenfunctions was not known in
general (see \cite{HuLe07} for a partial result). We prove it in
 \ref{ap:decay}, see Proposition~\ref{thm:decayofeigenfunction}. Also, the
unstable eigenvalue has no reason to be real, and this will make the
construction of the profile much more intricate than in the ground state
soliton case. This is the purpose of Proposition~\ref{prop:profile}. Once
the approximation profile is derived, the proofs of Theorem~\ref{thm:2} and
\ref{thm:3} follow from a fixed point argument around the profile.

The paper is organized as follows. In Section 2 we prove Theorem
\ref{thm:1}. Section 3 is devoted to the proofs of Theorems~\ref{thm:2}
and~\ref{thm:3}. In \ref{ap:decay} we prove the exponential decay of eigenfunctions for matrix Schrödinger operators and  in \ref{ap:cor} we prove Corollaries \ref{cor1} and \ref{cor2}. 

\noindent\textbf{Acknowledgement.} The authors are grateful to the unknown referee for valuable comments and suggestions.

\section{Existence}

In this section, we assume (A1)-(A3) and suppose we are given $N\in\N\setminus\{0,1\}$, and for $j=1,...,N$, $\w_j>0$,
$\g_j\in\R$, $v_j\in\Rd$, $x_j\in\Rd$ and
$\Phi_j\in\hu$ a solution of \eqref{eq:snls} (with $\w_0$ replaced by $\w_j$). Recall that 
\begin{gather*}
R_j(t,x)=\Phi_j(x-v_jt-x_j)e^{i(\frac{1}{2}v_j\cdot
  x-\frac{1}{4}|v_j|^2t+\w_jt+\g_j)},\\
\w_\star=\frac{1}{2}\min\left\{ \w_j,j=1,...,N \right\},\quad
v_\star=\frac{1}{9}\min\left\{|v_{j}-v_k|;j,k=1,...,N,j\neq k
\right\},
\end{gather*}
and $\alpha:=\sin\left(\frac{\sqrt{\pi}\Gamma(\frac{d-1}{2})}{N^2\Gamma(\frac{d}{2})}\right)$.

\subsection{Approximate solutions and convergence toward a multi-soliton}\label{sec:approximate}

Let $(T_n)_{n \ge 1} \subset\R$ be an increasing sequence of time such that $T_n\to+\infty$ and $(u_n)$ be solutions to \eqref{eq:nls} such that $u_n(T_n)=R(T_n)$. We call $u_n$ an \emph{approximate multi-soliton}.

The proof of Theorem~\ref{thm:1} relies on the following proposition.
\begin{prop}[Uniform estimates]\label{prop:estimate}
There exists $v_\sharp:=v_\sharp(\Phi_1,...,\Phi_N)>0$ such that if $v_\star>\alpha^{-1}v_\sharp$ then the following holds. There exist $n_0\in\N,$ $T_0>0$ such that for all $n\geq n_0$ every approximate multi-soliton $u_n$ is defined on $[T_0,T_n]$ and for all $t\in[T_0,T_n]$ we have
\begin{equation}\label{eq:prop:estimate}
\nhu{u_n(t)-R(t)} \leq e^{-\alpha\w_\star^{\frac{1}{2}} v_\star t}.
\end{equation}
\end{prop}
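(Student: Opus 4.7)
The plan is a backward bootstrap in the spirit of Merle \cite{Me90} and Martel--Merle \cite{MaMe06}, refined by the two-step $\ld$-first, $\hu$-second scheme that the authors announce. Setting $w_n := u_n - R$, one has $w_n(T_n) = 0$ and
\[
i \partial_t w_n + \Delta w_n + f(R + w_n) - \sum_{j=1}^N f(R_j) = 0.
\]
Fix a large $K$ (to be chosen), and for each $n$ let $[t_n^\ast, T_n]$ be the maximal subinterval of the existence interval of $u_n$ on which $\nhu{w_n(t)} \leq K e^{-\alpha \omega_\star^{1/2} v_\star t}$ holds. The core of the proof is to strictly improve this into $\nhu{w_n(t)} \leq \tfrac{K}{2}\, e^{-\alpha \omega_\star^{1/2} v_\star t}$ under the bootstrap assumption; by continuity this forces $t_n^\ast \leq T_0$ for some $T_0$ independent of $n$, and the statement follows after absorbing $K$ into $T_0$.

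The key step is the $\ld$ estimate obtained without Hessian coercivity. Claim \ref{cl:choicedirection} supplies a unit vector $e \in \Rd$ such that the projections $e \cdot v_j$ are pairwise separated by at least $c \alpha v_\star$ for some absolute $c > 0$. I introduce smooth cut-offs $\phi_j(t, x)$ that separate the $N$ trajectories by disjoint slabs perpendicular to $e$, with transition zones of width of order $v_\star t$; in those zones each $R_k$ is of size $e^{-c \omega_\star^{1/2} v_\star t}$ thanks to the exponential decay of $\Phi_k$. Computing $\frac{d}{dt} \intrd \phi_j |u_n|^2\, dx$ and $\frac{d}{dt} \intrd \phi_j\, \Im(\bar u_n \nabla u_n)\, dx$, the NLS dynamics reduces to boundary integrals against $\nabla \phi_j$ and $\partial_t \phi_j$, all supported in the transition zones and hence bounded by $e^{-c \omega_\star^{1/2} v_\star t}(1 + \nhu{w_n})$. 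Integrating backward from $T_n$ (where $w_n$ vanishes) and using the bootstrap yields
\[
\nld{w_n(t)} \leq C_0\, e^{-\alpha \omega_\star^{1/2} v_\star t}, \qquad t \in [t_n^\ast, T_n],
\]
crucially with $C_0$ \emph{independent of} $K$.

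For the $\hu$ bound I use the localized Galilean-shifted action
\[
\mathcal{F}(t) := \sum_{j=1}^N \intrd \phi_j \left( |\nabla u_n|^2 + \omega_j |u_n|^2 - 2 F(u_n) + v_j \cdot \Im(\bar u_n \nabla u_n) \right) dx,
\]
tuned so that each $\Phi_j$ is a critical point of the $j$-th summand. As in the previous step, $|\mathcal{F}'(t)|$ is bounded by $e^{-c \omega_\star^{1/2} v_\star t}$ times a polynomial in $\nhu{w_n}$, since the dynamical bulk collapses to boundary terms against $\nabla \phi_j$ and $\partial_t \phi_j$. Taylor-expanding $\mathcal{F}(t) - \mathcal{F}(T_n)$ around $R$ and using the soliton equation \eqref{eq:snls} produces
\[
\mathcal{F}(t) - \mathcal{F}(T_n) = \sum_{j=1}^N \tfrac{1}{2}\, \dual{H_j w_n(t)}{w_n(t)} + O\!\left(\nhu{w_n(t)}^3\right) + O\!\left(e^{-c \omega_\star^{1/2} v_\star t}\right),
\]
where $H_j$ is the Hessian of the $j$-th action at $\Phi_j$ in the moving frame. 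This $H_j$ is coercive on $\hu$ modulo a finite number of $\ld$-scalar products of $w_n(t)$ with its directions of non-coercivity (phase and translation generators together with the null and negative eigenfunctions). Each such projection is controlled by $\nld{w_n(t)}$, which the previous step bounds by $C_0\, e^{-\alpha \omega_\star^{1/2} v_\star t}$. The resulting inequality
\[
\nhud{w_n(t)} \leq C\, e^{-2\alpha \omega_\star^{1/2} v_\star t} + C\, \nhu{w_n(t)}^3
\]
closes the bootstrap at $\tfrac{K}{2}$ once $T_0$ is taken large enough that $K\, e^{-\alpha \omega_\star^{1/2} v_\star T_0}$ is small.

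The main obstacle is the $\ld$ step, and specifically producing a constant $C_0$ \emph{independent of} the bootstrap constant $K$: a $K$-dependent $C_0$ would only match the bootstrap on the $\ld$-bad directions and could not close the loop. This independence requires the cut-off transition zones to stay strictly disjoint from the soliton cores for all $t \geq T_0$, which forces the high-speed threshold $v_\star > \alpha^{-1} v_\sharp(\Phi_1, \ldots, \Phi_N)$, with $v_\sharp$ dictated by the exponential decay rates of the profiles. Once Proposition \ref{prop:estimate} is proved, Theorem \ref{thm:1} follows from a standard compactness argument: $(u_n(T_0))$ is uniformly bounded in $\hu$, and any weak subsequential limit provides initial data at $T_0$ for the desired multi-soliton.
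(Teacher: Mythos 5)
Your overall architecture (backward bootstrap; an $L^2$ estimate obtained first, without the Hessian; then an $H^1$ estimate from a localized action functional that is coercive modulo finitely many $L^2$-directions, each controlled by the $L^2$ step) is the paper's. The $H^1$ half is essentially correct. But the $L^2$ step — which is the crux of the argument and the place where the high-speed hypothesis is actually used — has a genuine gap. You propose to derive $\nld{w_n(t)}\le C_0 e^{-\alpha\w_\star^{1/2}v_\star t}$ from the near-conservation of the localized masses $\intrd \phi_j |u_n|^2\,dx$ and momenta $\intrd\phi_j\,\Im(\bar u_n\nabla u_n)\,dx$. These are quantities built from $u_n$ alone; their almost-conservation (up to flux terms through the transition zones) tells you that the localized mass near the $j$-th trajectory stays close to $\nldd{\Phi_j}$, but it carries no information about the cross term $\Re\intrd \bar u_n R\,dx$, hence none about $\nld{u_n-R}$. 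Knowing that $u_n$ has the right amount of $L^2$ mass near each soliton does not make $u_n-R$ small in $L^2$; this mechanism cannot produce the bound you assert, let alone one with a constant that beats the coercivity constant of the Hessian.

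What the paper does instead is differentiate $\nldd{v}$ directly for $v=u_n-R$, using the equation $iv_t+\mathcal Lv+\mathcal N(v)=0$. The only bulk contribution that survives is $-\intrd 2g'(|R|^2)\Re(R\bar v)\Im(R\bar v)\,dx$, coming from the non-self-adjoint part of $i\mathcal L$; it is \emph{quadratic} in $v$, hence $O(e^{-2\alpha\w_\star^{1/2}v_\star t})$ under the bootstrap. Integrating from $t$ to $T_n$ (where $v$ vanishes) then yields
\[
\nldd{v(t)}\le \frac{C_{\mathcal L}}{\alpha\w_\star^{1/2}v_\star}\,e^{-2\alpha\w_\star^{1/2}v_\star t},
\]
and the prefactor $(\alpha\w_\star^{1/2}v_\star)^{-1}$ produced by the time integration is the whole point: the condition $v_\star>\alpha^{-1}v_\sharp$ is chosen precisely so that this prefactor is smaller than $1/(2mK)$, with $K$ the coercivity constant of Lemma \ref{lem:coercivity-global} and $m$ the number of bad directions. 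Your diagnosis that the high-speed threshold is forced by keeping the cut-off transition zones disjoint from the soliton cores is therefore also off: that separation holds for any positive relative speeds once $T_0$ is large (the paper's transition zones have width $\sqrt t$ around $m_jt$); the speed enters only through the $1/v_\star$ gain in the $L^2$ integration. Replacing your localized-mass argument by this direct computation of $\frac{d}{dt}\nldd{v}$ repairs the proof and the rest of your scheme goes through.
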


In this section, assuming Proposition~\ref{prop:estimate}, we prove Theorem 1 by establishing the convergence of the approximate multi-solitons $u_n$ to a multi-soliton $u$ existing on $[T_0,+\infty)$. Our proof follows the same line as in \cite{CoMaMe09,MaMe06}.

From now on and in the rest of section~\ref{sec:approximate} we assume that $v_\star>\alpha^{-1}v_\sharp$, where $v_\sharp$ is given by Proposition~\ref{prop:estimate}.

Since the approximate multi-solitons $u_n$ are constructed by solving \eqref{eq:nls} backward in time, to prove Theorem~\ref{thm:1} we first need to make sure that the initial data $u_n(T_0)$ converge to some initial datum $u_0$.

\begin{lem}\label{lem:compactness}
There exists $u_0\in\hu$ such that, possibly for a subsequence only, $u_n(T_0)\to u_0$ strongly in $H^s(\Rd)$ as $n\to+\infty$ for any $s\in[0,1)$. 
\end{lem}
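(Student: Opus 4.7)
The plan is to apply weak compactness and Rellich-Kondrachov to the uniformly bounded sequence $(u_n(T_0))_n$, then interpolate with the $\hu$ bound to get strong $H^s$ convergence for $s<1$.

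From Proposition~\ref{prop:estimate} evaluated at $t=T_0$, one has $\nhu{u_n(T_0) - R(T_0)} \le e^{-\alpha \w_\star^{1/2} v_\star T_0}$ for $n \ge n_0$, so the sequence is uniformly bounded in $\hu$. Banach-Alaoglu produces a subsequence (not relabeled) and $u_0 \in \hu$ with $u_n(T_0) \rightharpoonup u_0$ weakly in $\hu$; Rellich-Kondrachov then upgrades this to strong convergence in $L^2_{\mathrm{loc}}(\Rd)$. To lift this to strong $\ld$ convergence on the whole space I would establish tightness of $(u_n(T_0))_n$ in $\ld$, exploiting the exponential decay at infinity of the soliton profiles $\Phi_j$ (recalled in the introduction) which makes $R(T_0)$ rapidly decreasing, together with the uniform $L^2$ control of $u_n(T_0) - R(T_0)$. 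Once strong $\ld$ convergence is in hand, the standard interpolation
\[
\|u_n(T_0) - u_0\|_{H^s(\Rd)} \le C\,\nld{u_n(T_0) - u_0}^{1-s}\,\nhu{u_n(T_0) - u_0}^{s}
\]
combined with the uniform $\hu$ bound delivers strong $H^s$ convergence for any $s \in [0,1)$.

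The main obstacle is tightness. The naive bound
\[
\|u_n(T_0)\|_{L^2(|x|>A)} \le \|R(T_0)\|_{L^2(|x|>A)} + e^{-\alpha\w_\star^{1/2} v_\star T_0}
\]
only produces tightness up to the fixed error $e^{-\alpha\w_\star^{1/2} v_\star T_0}$, which is not arbitrarily small. A sharper ingredient is $\ld$ mass conservation: $\nldd{u_n(T_0)} = \nldd{R(T_n)} \to \sum_j \nldd{\Phi_j}$ as $n \to \infty$, because the cross terms in $\nldd{R(T_n)}$ vanish thanks to the separation and exponential decay of the solitons at $T_n$. If one can show that $\nldd{u_0}$ matches this limiting mass, then the norm-plus-weak-convergence criterion automatically promotes weak to strong $\ld$ convergence, and the $H^s$ conclusion follows by interpolation.
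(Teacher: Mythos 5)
Your overall architecture (uniform $H^1$ bound, weak limit, Rellich--Kondrachov for $L^2_{\mathrm{loc}}$, tightness to upgrade to strong $L^2$, interpolation for $H^s$, $s<1$) is exactly the paper's, and you correctly identify both that tightness is the crux and that the naive estimate $\|u_n(T_0)\|_{L^2(|x|>A)} \le \|R(T_0)\|_{L^2(|x|>A)} + e^{-\alpha\w_\star^{1/2}v_\star T_0}$ fails because the error is a fixed constant. However, the fix you propose does not close the gap: it is circular. Mass conservation does give $\nldd{u_n(T_0)} = \nldd{R(T_n)} \to \sum_j \nldd{\Phi_j}$, and weak convergence gives $\nldd{u_0} \le \liminf_n \nldd{u_n(T_0)}$; but the reverse inequality --- your conditional ``if one can show that $\nldd{u_0}$ matches this limiting mass'' --- is precisely the statement that no mass escapes to spatial infinity along the sequence, i.e.\ it is equivalent (given weak convergence) to the tightness you are trying to establish. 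You cannot obtain it from the ingredients you have listed; nor can you get it from the asymptotic behaviour of the limiting solution $u$, since proving that $u$ converges to $R$ is the content of Theorem~\ref{thm:1}, whose proof relies on this very lemma.

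The missing idea is a propagation-in-time argument for a \emph{localized} mass (Claim~\ref{cl:compactness} in the paper). Given $\delta>0$, one first chooses a \emph{large} time $T_\delta$ so that $e^{-\alpha\w_\star^{1/2}v_\star T_\delta} \le \sqrt{\delta/4}$; at that time the $H^1$ estimate of Proposition~\ref{prop:estimate} together with the spatial localization of $R(T_\delta)$ gives $\int_{|x|>\rho_\delta}|u_n(T_\delta)|^2\,dx < \delta/2$ uniformly in $n$. One then transports this smallness backward from $T_\delta$ to $T_0$ by monitoring
\[
\Upsilon(t)=\intrd |u_n(t)|^2\,\tau\!\left(\frac{|x|-\rho_\delta}{\kappa_\delta}\right)dx,
\qquad
|\Upsilon'(t)| \le \frac{2C_0}{\kappa_\delta},
\]
where $C_0$ is the uniform $H^1$ bound; choosing the cutoff width $\kappa_\delta$ so large that $2C_0 T_\delta/\kappa_\delta < \delta/2$ yields $\Upsilon(T_0)\le\delta$, hence a uniform-in-$n$ exterior mass bound at $T_0$ on $\{|x|>\rho_\delta+\kappa_\delta\}$. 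The point is that the smallness is harvested at a late time, where the convergence estimate is as strong as one wishes, and the almost-conservation of the localized mass (with drift controlled by widening the cutoff) carries it back to the fixed time $T_0$. Without this step, or some substitute for it, your argument does not yield strong $L^2$ convergence.
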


Lemma~\ref{lem:compactness} is a consequence of the following claim.

\begin{claim}[$L^2(\Rd)$-compactness]\label{cl:compactness}
Take $\delta>0$. There exists $r_\delta>0$ such that for all $n$ large enough we have
\begin{equation}\label{eq:lemcvclaim}
\int_{|x|>r_\delta}|u_n(T_0)|^2dx\leq \delta.
\end{equation}
\end{claim}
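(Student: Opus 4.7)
The plan is to establish the tightness estimate by decomposing $u_n(T_0)$ against the soliton ansatz $R(T_0)$, leveraging (i) the exponential spatial decay of each bound state $\Phi_j$ and (ii) the uniform $H^1$-control of the error $u_n(T_0) - R(T_0)$ from Proposition~\ref{prop:estimate}. Since the hard uniform estimates are already available, the argument should be short.

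Applying Proposition~\ref{prop:estimate} at time $T_0$ yields, for all $n \ge n_0$,
\[
\nld{u_n(T_0) - R(T_0)} \le \nhu{u_n(T_0) - R(T_0)} \le e^{-\alpha \w_\star^{1/2} v_\star T_0}.
\]
From the exponential decay of bound states recalled in the introduction ($e^{\sqrt{\w_\star}|y|} |\Phi_j(y)| \in L^\infty(\Rd)$ for each $j$, since $\w_\star < \w_j$), one has $|R(T_0,x)| \le \sum_{j=1}^N |\Phi_j(x - x_j - v_j T_0)|$, which is concentrated exponentially near the bounded set of centers $\{x_j + v_j T_0 : j = 1, \ldots, N\}$. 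Hence there exist $C > 0$ and $r_0 > 0$ (depending on $T_0$ and on the fixed data) such that for $r \ge r_0$,
\[
\int_{|x|>r} |R(T_0,x)|^2 \, dx \le C e^{-2\sqrt{\w_\star}\, r}.
\]
Combining via the $L^2$ triangle inequality on $\{|x| > r\}$ gives, for $n \ge n_0$ and $r \ge r_0$,
\[
\int_{|x|>r} |u_n(T_0,x)|^2 \, dx \le 2 C e^{-2\sqrt{\w_\star}\, r} + 2 e^{-2\alpha \w_\star^{1/2} v_\star T_0}.
\]
Given $\delta > 0$, assuming $T_0$ is large enough so that the second term is at most $\delta/2$, one chooses $r_\delta$ large enough so that the first term is at most $\delta/2$; the claim follows.

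The main obstacle is a matter of quantifiers: the residual $\nld{u_n(T_0) - R(T_0)}$ is controlled by a constant $e^{-\alpha \w_\star^{1/2} v_\star T_0}$ depending on the fixed $T_0$ rather than on $n$, so for arbitrarily small $\delta$ one must have $T_0$ correspondingly large. This is benign because Proposition~\ref{prop:estimate} only provides a lower threshold for $T_0$, and the uniform estimates remain valid on $[T_0', T_n]$ for any $T_0' \ge T_0$; one can therefore fix $T_0$ at the outset of the compactness argument large enough to accommodate any prescribed $\delta$. Should that flexibility be unavailable, a more robust alternative in the spirit of \cite{MaMe06, CoMaMe09} would prove tightness via a monotonicity formula for $\int \psi_R(x) |u_n(t,x)|^2\, dx$, propagating the spatial concentration of $u_n(T_n) = R(T_n)$ backward in time from the separation of the soliton paths.
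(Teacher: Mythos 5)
Your direct decomposition at time $T_0$ does not close, and the quantifier issue you flag at the end is not benign. In the paper $T_0$ is fixed once and for all by Proposition~\ref{prop:estimate}, and the whole point of Claim~\ref{cl:compactness} is to obtain, \emph{at that single fixed time}, a tail bound valid for every $\delta>0$: this is what gives strong $L^2$ convergence of the sequence $u_n(T_0)$ to one initial datum $u_0$, from which the multi-soliton is then evolved on $[T_0,+\infty)$. Your estimate
\[
\int_{|x|>r}|u_n(T_0)|^2\,dx\ \le\ 2Ce^{-2\sqrt{\w_\star}\,r}+2e^{-2\alpha\w_\star^{1/2}v_\star T_0}
\]
has a second term which is a fixed positive constant, so it says nothing once $\delta$ is smaller than that constant. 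Replacing $T_0$ by some $T_0(\delta)$ destroys the statement: you would prove tightness of $\{u_n(T_0(\delta))\}_n$ for each $\delta$ at a different time, which does not yield tightness of $\{u_n(T_0)\}_n$ at the fixed $T_0$, hence no strong limit $u_0$ at a single time and no well-defined initial datum for Theorem~\ref{thm:1}.

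The missing ingredient is precisely the step you relegate to a final sentence: one must apply your decomposition argument at an intermediate, $\delta$-dependent time $T_\delta\ge T_0$ (where $e^{-\alpha\w_\star^{1/2}v_\star T_\delta}\le\sqrt{\delta/4}$ makes the residual small), and then \emph{transport the localization backward from $T_\delta$ to $T_0$}. This is what the paper does, via the quantity $\Upsilon(t)=\int|u_n(t)|^2\tau\bigl(\frac{|x|-\rho_\delta}{\kappa_\delta}\bigr)dx$, whose time derivative is bounded by $2C_0/\kappa_\delta$ uniformly in $n$; choosing $\kappa_\delta$ so that $\frac{2C_0}{\kappa_\delta}T_\delta<\frac{\delta}{2}$ controls the mass flux over the \emph{fixed-length} interval $[T_0,T_\delta]$. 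Note also that your fallback, as phrased, proposes to propagate the concentration of $u_n(T_n)=R(T_n)$ backward from $T_n$; this cannot work uniformly in $n$, because the flux bound scales like $(T_n-T_0)/\kappa$ and would force the cut-off radius to grow with $n$. The propagation must start from the fixed time $T_\delta$, not from $T_n$. So the skeleton of the right proof is present in your last sentence, but the argument you actually wrote out is the one that fails.
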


\begin{proof}
Let $n$ be large enough so that the conclusions of Proposition~\ref{prop:estimate} hold. Let $T_\delta$ be such that $e^{-\alpha\w_\star^{\frac{1}{2}}  v_\star T_\delta}\leq \sqrt{\frac{\delta}{4}}$. Then, by Proposition~\ref{prop:estimate}, we have
\begin{equation}\label{eq:lemcv1}
\nhu{u_n(T_\delta)-R(T_\delta)}\leq \sqrt{\frac{\delta}{4}}.
\end{equation}
Let $\rho_\delta$ be such that 
\begin{equation}\label{eq:lemcv2}
\int_{|x|>\rho_\delta}|R(T_\delta)|^2dx<\frac{\delta}{4}.
\end{equation}
From \eqref{eq:lemcv1}-\eqref{eq:lemcv2} we infer
\begin{equation}\label{eq:lemcv3}
\int_{|x|>\rho_\delta}|u_n(T_\delta)|^2dx<\frac{\delta}{2}.
\end{equation}
We define a $\mathcal{C}^1$ cut-off function $\tau:\R\to\R$ such that $\tau(s)=0$ if $s\leq 0$, $\tau(s)=1$ if $s\geq 1$, $\tau(s)\in[0,1]$ and $|\tau'(s)|\leq 2$ if $s\in[0,1]$. Let $\kappa_\delta$ to be determined later and consider
\[
\Upsilon(t):=\intrd|u_n(t)|^2\tau\left(\frac{|x|-\rho_\delta}{\kappa_\delta}\right)dx. 
\]
To obtain \eqref{eq:lemcvclaim} from \eqref{eq:lemcv3} we need to establish a link between $\Upsilon(T_0)$ and $\Upsilon(T_\delta)$. Differentiating in time, we obtain after simple calculations (see e.g. \cite[Claim 2]{MaMe06})
\[
\Upsilon'(t)=\frac{2}{\kappa_\delta}\Im\intrd\bar{u}_n\nabla u_n\cdot\frac{x}{|x|}\tau'\left(\frac{|x|-\rho_\delta}{\kappa_\delta}\right)dx.
\]
Since $\nhu{u_n(t)}$ is bounded independently of $n$ and $t$, there exists 
\[
C_0:=\sup_{n\in\N}\sup_{t\in[T_0,T_n]}\nhud{u_n(t)}>0
\]
 such that 
\[
|\Upsilon'(t)|\leq \frac{2C_0}{\kappa_\delta}.
\]
Choose $\kappa_\delta$ such that $\frac{2C_0}{\kappa_\delta}T_\delta<\frac{\delta}{2}$. Then, by integrating between $T_0$ and $T_\delta$ we obtain
\begin{equation}\label{eq:lemcv4}
\Upsilon(T_0)-\Upsilon(T_\delta)\leq \frac{\delta}{2}.
\end{equation}
From \eqref{eq:lemcv3} we infer that 
\[
\Upsilon(T_\delta)=\intrd|u_n(T_\delta)|^2\tau\left(\frac{|x|-\rho_\delta}{\kappa_\delta}\right)dx\leq\int_{|x|>\rho_\delta}|u_n(T_\delta)|^2dx\leq \frac{\delta}{2}.
\]
Combining with \eqref{eq:lemcv4} we obtain
\[
\Upsilon(T_0)\leq \delta.
\]
Now set $r_\delta:=\kappa_\delta+\rho_\delta$. Then from the definition of $\tau$ it is easy to see that 
\[
\int_{|x|>r_\delta}|u_n(T_0)|^2dx\leq \Upsilon(T_0)\leq \delta,
\]
which proves the claim.
\end{proof}

\begin{proof}[Proof of Lemma~\ref{lem:compactness}]
Since $u_n(T_0)$ is bounded in $\hu$, there exists $u_0\in\hu$ such that up to a subsequence $u_n(T_0)\rightharpoonup u_0$ weakly in $\hu$. Hence, $u_n(T_0)\to u_0$ strongly in $L^2_{\mathrm{loc}}(\Rd)$ and actually strongly in $\ld$ by Claim~\ref{cl:compactness}. By interpolation we get the desired conclusion.
\end{proof}

\begin{proof}[Proof of Theorem~\ref{thm:1}]
Let $u_0$ be given by Lemma~\ref{lem:compactness} and let $u\in\mathcal{C}([T_0,T^\star),\hu)$ be the corresponding maximal solution of \eqref{eq:nls}. By (A1)-(A2), there exists $0<\s<1$ such that $1< p<1+\frac{4}{d-2\s}$ and
\[
|f(z_1)-f(z_2)|\leq C(1+|z_1|^{p-1}+|z_2|^{p-1})|z_1-z_2|\mbox{ for all }z_1,z_2\in\C.
\]
This implies that the Cauchy problem for \eqref{eq:nls} is well-posed in $H^\s(\Rd)$ (see \cite{Ca03,CaWe90}).
Combined with Lemma~\ref{lem:compactness} this implies that $u_n(t)\to u(t)$ strongly in $H^\s(\Rd)$ for any $t\in[T_0,T^\star)$. By boundedness of $u_n(t)$ in $\hu$, we also have $u_n(t)\rightharpoonup u(t)$ weakly in $\hu$ for any $t\in[T_0,T^\star)$. By Proposition~\ref{prop:estimate}, for any $t\in[T_0,T^\star)$ we have
\begin{equation}\label{eq:proofthm1}
\nhu{u(t)-R(t)}\leq \liminf_{n\to+\infty}\nhu{u_n(t)-R(t)}\leq e^{-\alpha\w_\star^{\frac{1}{2}}  v_\star t}.
\end{equation}
In particular, since $R(t)$ is bounded in $\hu$ there exists $C>0$ such that for any $t\in[T_0,T^\star)$ we have
\begin{equation}\label{eq:proofthm2}
\nhu{u(t)}\leq e^{-\alpha\w_\star^{\frac{1}{2}}  v_\star t}+\nhu{R(t)}\leq C.
\end{equation}
Recall that, by the blow up alternative (see e.g. \cite{Ca03}), either $T^\star=+\infty$ or $T^\star<+\infty$ and $\lim_{t\to T^\star}\nhu{u}=+\infty$. Therefore \eqref{eq:proofthm2} implies that $T^\star =+\infty$. From \eqref{eq:proofthm1} we infer that for all $t\in[T_0,+\infty)$ we have 
\[
\nhu{u(t)-R(t)}\leq e^{-\alpha\w_\star^{\frac{1}{2}}  v_\star t}.
\]
This concludes the proof.
\end{proof}

\subsection{Uniform backward estimates}

This section is devoted to the proof of Proposition~\ref{prop:estimate}. This proof relies on a bootstrap argument. Indeed, from the definition of the final datum $u_n(T_n)$ and continuity of $u_n$ in time, it follows that \eqref{eq:prop:estimate} holds on an interval $[t^\dagger,T_n]$ for $t^{\dagger}$ close enough to $T_n$. Then the following Proposition~\ref{prop:estimate2} shows that we can actually improve to a better estimate, hence leaving enough room to extend the interval on which the original estimate holds.  

\begin{prop}\label{prop:estimate2}
There exists $v_\sharp:=v_\sharp(\Phi_1,...,\Phi_N)>0$ such that if $v_\star>\alpha^{-1}v_\sharp$ then the following holds. There exist $n_0\in\N,$ $T_0>0$ such that for all $n\geq n_0$ every approximate multi-soliton $u_n$ is defined on $[T_0,T_n]$. Let $t^{\dagger}\in[T_0,T_n]$ and $n\geq n_0$. If for all $t\in[t^{\dagger},T_n]$ we have  
\begin{equation}\label{eq:H1est}
\nhu{u_n(t)-R(t)}\leq e^{-\alpha\w_\star^{\frac{1}{2}}  v_\star t}
\end{equation}
then for all $t\in[t^{\dagger},T_n]$ we have 
\begin{equation}\label{eq:H1est-bootstrap}
\nhu{u_n(t)-R(t)}\leq \frac{1}{2} e^{-\alpha\w_\star^{\frac{1}{2}}  v_\star t}.
\end{equation}
\end{prop}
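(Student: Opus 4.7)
Write $w_n(t) := u_n(t) - R(t)$. The hypothesis gives $\nhu{w_n(t)} \le e^{-\alpha\w_\star^{1/2} v_\star t}$ on $[t^\dagger,T_n]$, and the goal is to improve this by a factor $1/2$. The central geometric fact is that the centers of the solitons separate linearly: $|(x_j + v_j t) - (x_k + v_k t)| \ge 9 v_\star t$ for $j\neq k$. Since each $\Phi_j$ decays like $e^{-\sqrt{\w_j}|x|}$ with $\sqrt{\w_j} \ge \sqrt{2\w_\star}$, any cross-interaction $\intrd R_j\, \overline{R_k}\, \psi\, dx$ supported in the half-spaces half-way between solitons (in the optimal directions of Claim~\ref{cl:choicedirection}) is bounded by $e^{-2\alpha\w_\star^{1/2} v_\star t}$, i.e.\ one power better than the bootstrap bound. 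This gain is the engine of the argument; it must be harvested through a well-chosen Lyapunov functional combined with a two-step ($L^2$ then $H^1$) backward bootstrap.

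\textbf{Localised Lyapunov functional.} Following the strategy of \cite{MaMe06,CoMaMe09}, I would pick a smooth moving partition of unity $\{\psi_j(t,\cdot)\}_j$ concentrated on tubes around $x_j+v_jt$, with $\nabla\psi_j$ supported in those separating half-spaces. Using this partition I localise the mass and momentum, $\q M_j(t) := \intrd |u_n|^2 \psi_j\,dx$ and $\q P_j(t) := \Im\intrd \bar u_n \nabla u_n\,\psi_j\,dx$, and form
\[
\q F(t) := E(u_n(t)) + \sum_{j=1}^N \left[\tfrac{1}{2}\bigl(\w_j + \tfrac{1}{4}|v_j|^2\bigr)\q M_j(t) - v_j \cdot \q P_j(t)\right].
\]
The combination is chosen so that the quadratic term in the expansion around $R$ reads $\sum_j H_j(\tilde w_j,\tilde w_j)$, where $H_j$ is the Hessian of the action $S_{\w_j}$ at $\Phi_j$ in the moving frame and $\tilde w_j$ is $w_n$ pulled back to that frame. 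Differentiating $\q F$ in time and using that $\psi_j$ is transported at speed $v_j$, only terms supported on $\nabla\psi_j$ survive (hence $\lesssim e^{-2\alpha\w_\star^{1/2}v_\star t}$) together with cubic non-linear contributions ($\lesssim \nhu{w_n}^3$ controlled via (A1)-(A2) and the bootstrap). Integrating backwards from $T_n$, where $w_n(T_n)=0$, yields
\[
|\q F(u_n(t)) - \q F(R(t))| \lesssim v_\star^{-1}\, e^{-2\alpha\w_\star^{1/2}v_\star t}.
\]

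\textbf{Two-step bootstrap and the main obstacle.} Each $H_j$ is $H^1$-coercive only modulo a \emph{finite-dimensional} space of bad directions $\{\chi_j^{(\ell)}\}_\ell$ coming from symmetries ($\Phi_j$, $\nabla\Phi_j$, $i\Phi_j$, $ix\Phi_j$) and from its non-positive eigenmodes, so that $H_j(\tilde w_j,\tilde w_j) \ge \lambda_j \nhud{\tilde w_j} - \lambda_j^{-1} \sum_\ell (\tilde w_j,\chi_j^{(\ell)})_{L^2}^2$. For ground states these scalar products are killed by modulation and conservation of mass and momentum; no such mechanism is available for excited states since the spectrum of $H_j$ is essentially unknown, and this is the \textbf{main obstacle} of the proof. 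I would bypass it by a direct \emph{$L^2$-bootstrap}: set $y_{j,\ell}(t) := (w_n(t), \chi_j^{(\ell)}(t))_{L^2}$, differentiate using the equation for $w_n$, and observe that the linear flow contribution is harmless under the $L^2$ pairing while interaction terms between solitons and the non-linear remainder are respectively of order $e^{-2\alpha\w_\star^{1/2}v_\star t}$ and $\nhu{w_n}^2$. Backward integration from $T_n$ (where $y_{j,\ell}(T_n)=0$) produces $|y_{j,\ell}(t)| \lesssim v_\star^{-1} e^{-2\alpha\w_\star^{1/2}v_\star t}$ \emph{without} invoking Hessian coercivity. Plugging these $L^2$-bounds into the coercivity inequality and combining with the Lyapunov estimate of the previous paragraph gives
\[
\nhud{w_n(t)} \lesssim v_\star^{-2}\, e^{-2\alpha\w_\star^{1/2}v_\star t},
\]
which beats $\tfrac{1}{4} e^{-2\alpha\w_\star^{1/2}v_\star t}$ as soon as $v_\star \ge v_\sharp/\alpha$ with $v_\sharp = v_\sharp(\Phi_1,\dots,\Phi_N)$ sufficiently large, closing the bootstrap and proving \eqref{eq:H1est-bootstrap}. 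The requirement that $v_\star$ be large --- the trade-off of the $L^2$-step --- is precisely why the theorem is confined to high relative speeds.
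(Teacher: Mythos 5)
Your proposal follows the same architecture as the paper's proof: a localized action-type functional whose quadratic part is $\mathcal H(t,\cdot)=\sum_j H_j(t,\cdot)$, an almost-conservation estimate obtained by differentiating it in time and integrating backwards from $T_n$ (where $u_n(T_n)=R(T_n)$), coercivity of $\mathcal H$ modulo finitely many $\ld$-directions, and a separate backward-in-time $\ld$-type estimate --- closed only for $v_\star$ large --- to neutralize those directions. The one genuine difference is in that last step. The paper does not estimate the individual projections $\psld{v}{X_j^{\ell}}$: it estimates the \emph{full} norm $\nld{v}$ by computing $\frac{1}{2}\frac{d}{dt}\nldd{v}=\psld{i\mathcal Lv}{v}+\psld{i\mathcal N(v)}{v}$ and using the cancellation $\Re\intrd i(\Delta v+g(|R|^2)v)\bar v\,dx=0$, which leaves only $-\intrd 2g'(|R|^2)\Re(R\bar v)\Im(R\bar v)\,dx=O(\nldd{v})$ with a constant depending only on $g$ and $\|R\|_{L^\infty(\Rd)}$; backward integration and largeness of $v_\star$ then give $\nld{v(t)}\le(2mK)^{-1/2}e^{-\alpha\w_\star^{1/2}v_\star t}$, and the projections are controlled by Cauchy--Schwarz. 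This buys two things over your projection ODEs: no regularity or structural information about the bad directions is needed beyond $\nld{\tilde X_j^{\ell}}=1$, and the constants are manifestly independent of the speeds $v_j$.

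Your version of this step contains an overclaim that must be repaired. Writing $y_{j,\ell}=\psld{w_n}{\chi_j^{(\ell)}}$ and differentiating, the free and linearized parts of the flow contribute $\psld{w_n}{e^{i\theta_j}(-i\Delta\tilde\chi+i\w_j\tilde\chi+\cdots)}$ (after the Galilean phase and transport terms cancel); this is \emph{linear} in $w_n$, hence of size $e^{-\alpha\w_\star^{1/2}v_\star t}$ under the bootstrap hypothesis, not $e^{-2\alpha\w_\star^{1/2}v_\star t}$ as your phrase ``the linear flow contribution is harmless'' suggests. It would vanish only if $\chi_j^{(\ell)}$ were annihilated by the adjoint of the linearized operator --- exactly the spectral information unavailable for excited states. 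Backward integration therefore yields $|y_{j,\ell}(t)|\lesssim v_\star^{-1}e^{-\alpha\w_\star^{1/2}v_\star t}$, one exponential power and not two. This corrected bound still closes the argument, since the coercivity inequality only requires $y_{j,\ell}^2\le(2mK)^{-1}e^{-2\alpha\w_\star^{1/2}v_\star t}$, which holds once $v_\star$ is large; but you must also verify that the constant in the ODE for $y_{j,\ell}$ does not itself grow with $|v_j|$ (it does not, because the terms in $|v_j|^2$ and $v_j\cdot\nabla$ coming from $\partial_t\chi_j^{(\ell)}$ cancel against those from $-i\Delta\chi_j^{(\ell)}$ --- a short computation requiring $\tilde\chi_j^{(\ell)}\in H^2(\Rd)$, which holds by elliptic regularity for the eigenfunctions furnished by the coercivity lemma). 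Likewise your final bound $\nhud{w_n}\lesssim v_\star^{-2}e^{-2\alpha\w_\star^{1/2}v_\star t}$ should be weakened to something of the form $\bigl(\frac{C}{\sqrt t}+\frac{C}{v_\star}\bigr)e^{-2\alpha\w_\star^{1/2}v_\star t}$; the conclusion \eqref{eq:H1est-bootstrap} is unaffected for $T_0$ and $v_\star$ large.
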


Before proving Proposition~\ref{prop:estimate2}, we indicate precisely how it is used to obtain Proposition~\ref{prop:estimate}.

\begin{proof}[Proof of Proposition~\ref{prop:estimate}]
Let $T_0$, $n_0$ and $v_\sharp$ be given by Proposition~\ref{prop:estimate2}, assume $v_\star>\alpha^{-1}v_\sharp$, and let $n\geq n_0$.
Since $u_n(T_n)=R(T_n)$ and $u_n$ is continuous in $\hu$, for $t$ close enough to $T_n$ we have
\begin{equation}\label{eq:prop:estimate1}
\nhu{u_n(t)-R(t)}\leq e^{-\alpha\w_\star^{\frac{1}{2}}  v_\star t}.
\end{equation}
Let $t^\dagger$ be the minimal time such that \eqref{eq:prop:estimate1} holds:
\[
t^\dagger:=\min\{\tau\in[T_0,T_n];\eqref{eq:prop:estimate1}\mbox{ holds for all }t\in[\tau,T_n] \}.
\]
We prove by contradiction that $t^\dagger=T_0$. Indeed, assume that $t^\dagger>T_0$. Then
\[
\nhu{u_n(t^\dagger)-R(t^\dagger)}\leq e^{-\alpha\w_\star^{\frac{1}{2}}  v_\star t^\dagger}
\]
and by Proposition~\ref{prop:estimate2} we can improve this estimate in
\[
\nhu{u_n(t^\dagger)-R(t^\dagger)}\leq \frac{1}{2} e^{-\alpha\w_\star^{\frac{1}{2}}  v_\star t^\dagger}.
\]
Hence, by continuity of $u_n(t)$ in $\hu$, there exists $T_0\leq t^\ddagger<t^\dagger$ such that \eqref{eq:prop:estimate1} holds for all $t\in[t^\ddagger,t^\dagger]$. This contradicts the minimality of $t^\dagger$ and finishes the proof.
\end{proof}

The proof of Proposition~\ref{prop:estimate2} is done in two steps. First, assuming \eqref{eq:H1est} we prove that we can control the $L^2(\Rd)$-norm of $(u_n-R)$. To obtain the full control on the $\hu$-norm of $(u_n-R)$ as in \eqref{eq:H1est-bootstrap} we use the linearization of an action-like functional. This linearization is coercive (i.e. controls the $\hu$-norm) up to a finite number of non-positive directions that can all be controlled due to the $\ld$-estimate.

Let $T_0>0$ large enough and fix $n\in\N$ such that $T_n>T_0$. For notational convenience, the dependency on $n$ is understood for $u$ and we drop the subscript $n$. Set $v:=u-R$. Let $t^{\dagger}\in[T_0,T_n]$ and assume that for all $t\in[t^{\dagger},T_n]$ we have
\[
\nhu{v(t)}\leq e^{-\alpha\w_\star^{\frac{1}{2}}  v_\star t}.
\]

\subsubsection*{\texorpdfstring{Step 1: $\ld$-control}{Step 1: L2-control}}

\begin{lem}\label{lem:step1}
For all $K>0$ and $m\in\N\setminus\{0\}$ there exists $v_\sharp=v_\sharp(K,m,\Phi_1,...,\Phi_N)>0$ such that if $v_\star>\alpha^{-1}v_\sharp$ then for all $t\in[t^{\dagger},T_n]$ we have 
\begin{equation*}
\nld{v(t)}\leq \frac{1}{\sqrt{2mK}} e^{-\alpha\w_\star^{\frac{1}{2}}  v_\star t}.
\end{equation*}
\end{lem}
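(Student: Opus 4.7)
The plan is to bound $\nld{v(t)}$ by running a localized mass-type identity backward from $T_n$, separately for each soliton component. Using Claim~\ref{cl:choicedirection}, I build a $\mathcal C^1$ partition of unity $\sum_{j=1}^N\phi_j(t,x) \equiv 1$ with $\phi_j \approx 1$ on a neighborhood of the $j$-th trajectory $x = v_j t + x_j$ and $\phi_j \approx 0$ near every other trajectory; these can be assembled from one-dimensional cut-offs in directions $e_{jk}$ satisfying $e_{jk}\cdot(v_j-v_k) \geq \alpha|v_j - v_k|$, so that on $\mathrm{supp}(\partial_t\phi_j)\cup\mathrm{supp}(\nabla\phi_j)$ the distance to every soliton trajectory is at least of order $\alpha v_\star t$.

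Define the localized squared distance
\[
\mathcal L_j(t) := \intrd |u(t,x) - R_j(t,x)|^2\,\phi_j(t,x)\, dx.
\]
Using that $u$ solves \eqref{eq:nls} and $R_j$ is itself a soliton, integrating by parts in the Laplacian term and observing that $\Im(|\nabla(u-R_j)|^2)=0$, I obtain
\begin{align*}
\mathcal L_j'(t) ={}& \intrd |u-R_j|^2\, \partial_t\phi_j\, dx \\
&{}+ 2\intrd \Im\bigl((\bar u - \bar R_j)\nabla(u-R_j)\bigr)\cdot\nabla\phi_j\, dx \\
&{}- 2\intrd \bigl(g(|u|^2) - g(|R_j|^2)\bigr)\Im(u\bar R_j)\,\phi_j\, dx.
\end{align*}

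The first two contributions are localized on the transition region of $\phi_j$, whose points lie at distance at least of order $\alpha v_\star t$ from every trajectory. Combining the Agmon-type pointwise decay $|\Phi_k(x)| + |\nabla\Phi_k(x)| \lesssim e^{-\sqrt{\w_k}|x|}$ with the uniform $\hu$-bound on $u$ furnished by \eqref{eq:H1est}, they are bounded by $C\, e^{-2\alpha\w_\star^{1/2} v_\star t}$; the factor $2$ in the exponent is the gain afforded by the definition $\w_\star = \tfrac12\min_j\w_j$, which leaves $\sqrt{\w_k}\geq\sqrt{2\w_\star}$ of slack against $\alpha\sqrt{\w_\star}\,v_\star$. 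The third (nonlinear) term is not localized at the transition, but a Taylor expansion at $v=u-R_j$ yields $g(|u|^2)-g(|R_j|^2) = O(|v|(|R_j|+|v|))$ and $\Im(u\bar R_j) = \Im(v\bar R_j) = O(|v|\,|R_j|)$, so it is $O(\nldd{v})$ and therefore also controlled by $C\,e^{-2\alpha\w_\star^{1/2} v_\star t}$ directly from \eqref{eq:H1est}, after using Sobolev and growth assumption (A2).

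Integrating on $[t, T_n]$ and using that $u(T_n) = R(T_n)$ makes $\mathcal L_j(T_n) = \int\bigl|\sum_{k\ne j}R_k(T_n)\bigr|^2\phi_j(T_n)\,dx$ exponentially small in $T_n$, one finds $\mathcal L_j(t) \leq \frac{C}{\alpha\w_\star^{1/2} v_\star}\, e^{-2\alpha\w_\star^{1/2} v_\star t}$. Summing over $j$, using $\sum_j \phi_j \equiv 1$ together with the exponential smallness of $R - R_j$ on $\mathrm{supp}(\phi_j)$, gives $\nldd{v(t)} \leq \frac{C'}{\alpha\w_\star^{1/2} v_\star}\, e^{-2\alpha\w_\star^{1/2} v_\star t}$. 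Taking a square root and choosing $v_\sharp = v_\sharp(K,m,\Phi_1,\dots,\Phi_N)$ large enough that the prefactor is $\leq 1/(2mK)$ whenever $v_\star > \alpha^{-1} v_\sharp$ completes the proof. The main obstacle is sharpening the rate of decay of $\mathcal L_j'$ to the full $2\alpha\w_\star^{1/2} v_\star$ (rather than just the $\alpha\w_\star^{1/2} v_\star$ of the bootstrap hypothesis): this relies on the Agmon decay of $\Phi_k$ on the transition region together with the one-sided geometric separation provided by Claim~\ref{cl:choicedirection}, and is what forces the smallness condition on $v_\star$ to depend on $K$ and $m$ through the $1/v_\star$ gain from time integration.
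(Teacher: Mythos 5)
Your approach is genuinely different from the paper's, and as written it has a gap that defeats the very mechanism you rely on for the conclusion. The whole point of the lemma is that the prefactor in front of $e^{-2\alpha\w_\star^{1/2}v_\star t}$ can be made smaller than $\frac{1}{2mK}$ by taking $v_\star$ large, and you attribute this smallness to the factor $\frac{1}{\alpha\w_\star^{1/2}v_\star}$ gained by integrating the derivative bound in time. But your localized functionals $\mathcal L_j$ produce the term $\int|u-R_j|^2\,\partial_t\phi_j\,dx$, and since the transition surfaces of any moving partition of unity adapted to the trajectories $x=v_jt+x_j$ must travel at speeds comparable to the $v_j$ (hence $\gtrsim v_\star$), one has $\|\partial_t\phi_j\|_{L^\infty}\gtrsim v_\star\,\|\nabla\phi_j\|_{L^\infty}$. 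On the transition region the only available control of $u-R_j$ is $|u-R_j|\le |v|+|R|+|R_j|$, and the $v$-part can only be estimated by the global bootstrap bound $\nldd{v}\le e^{-2\alpha\w_\star^{1/2}v_\star t}$ (nothing prevents $v$ from concentrating exactly there). So this term is of size $v_\star\, e^{-2\alpha\w_\star^{1/2}v_\star t}$ up to the cutoff width, and after time integration the factor $v_\star$ exactly cancels the $\frac{1}{v_\star}$ gain: the resulting prefactor is $O(1)$ in $v_\star$ and cannot be forced below $\frac{1}{\sqrt{2mK}}$ by enlarging $v_\star$. (One could repair this with cutoffs whose width grows in time, trading the smallness in $v_\star$ for smallness in $T_0$, but you neither do this nor account for the $v_\star$ in $\partial_t\phi_j$ at all.)

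The paper avoids the issue entirely by not localizing: it works with the single global quantity $\nldd{v}$ for $v=u-R$. Writing $iv_t+\mathcal Lv+\mathcal N(v)=0$ with $\mathcal Lv=\Delta v+df(R).v$, the computation of $\frac{d}{dt}\nldd{v}=2\psld{i\mathcal Lv}{v}+2\psld{i\mathcal N(v)}{v}$ shows that the Laplacian and the $\m C$-linear part of $df(R)$ contribute nothing (they give purely real quadratic forms multiplied by $i$), and the only surviving term is $-\int 2g'(|R|^2)\Re(R\bar v)\Im(R\bar v)\,dx$, bounded pointwise by $C_{\mathcal L}|v|^2$ with $C_{\mathcal L}$ depending only on $g$ and $\|R\|_{L^\infty}$ --- in particular independent of $v_\star$. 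The bootstrap hypothesis then gives $|\frac{d}{dt}\nldd{v}|\le 2C_{\mathcal L}e^{-2\alpha\w_\star^{1/2}v_\star t}$, and integration from $t$ to $T_n$ (using $v(T_n)=0$) yields the clean prefactor $\frac{C_{\mathcal L}}{\alpha\w_\star^{1/2}v_\star}$, which is where the condition on $v_\sharp$ comes from. Note also that Claim~\ref{cl:choicedirection}, the Agmon decay of the $\Phi_k$, and the separation of the solitons play no role in this step; they are only needed for the $H^1$ control in Step 2. Your heuristic that the difficulty lies in ``sharpening the rate from $\alpha\w_\star^{1/2}v_\star$ to $2\alpha\w_\star^{1/2}v_\star$'' is also off the mark: the doubling of the rate is automatic because the derivative is quadratic in $v$; the actual content is the $v_\star$-independence of the constant.
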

Notice that the reason why we introduce such $K$ and $m$ will appear later in the proof.

\begin{proof}
First note that by identifying $\C$ to $\R^2$ and viewing $f: \m R^2 \to \m R^2$ we can consider
\[ 
df(z).w = g(|z|^2)w + 2 \Re(z \bar w) g'(|z|^2) z.
\]

The function $v$ satisfies
\[
iv_t+\mathcal{L}v+\mathcal{N}(v)=0,
\]
where
\[
\mathcal{L}v:=\D v+df(R).v 
\]
and the remaining nonlinear term $\mathcal{N}(v)$ verifies
\[
|\psld{i\mathcal{N}(v)}{v}|\leq \eta(\nhu{v})\nhud{v},
\]
where $\eta$ is a decreasing function satisfying $\eta(s)\to 0$ when $s\to 0$.
Take any $t\in[t^{\dagger},T_n]$. We have
\[
\frac{1}{2}\frac{d}{dt}\nldd{v}=\psld{v_t}{v}=\psld{i\mathcal{L}v}{v}+\psld{i\mathcal{N}(v)}{v}.
\]
We have
\begin{align*}
\MoveEqLeft
\psld{i\mathcal{L}v}{v}\\
&=\Re\intrd i\left(\D v+df(R).v)\right)\bar{v}dx,\\
&=\Re\intrd i\left(\D v+g(|R|^2)v+2g'(|R|^2)\Re(R\bar{v})R\right)\bar{v}dx,\\
&=\Re\intrd i(-|\nabla v|^2+g(|R|^2)|v|^2+2g'(|R|^2)\Re(R\bar{v})R\bar{v})dx,\\
&=-\intrd 2g'(|R|^2)\Re(R\bar{v})\Im(R\bar{v}) dx.
\end{align*}
Therefore,
\begin{align*}
|\psld{i\mathcal{L}v}{v}|&\leq\intrd 2|g'(|R|^2)||R|^2|v|^2dx.\\
&\leq \frac{C_\mathcal{L}}{2} \nldd{v},
\end{align*}
where this last constant $C_\mathcal{L}$ depends only on $g$ and $\|R\|_{L^\infty(\Rd)}$. 
By the bootstrap assumption on $v$, this implies
\[
|\psld{i\mathcal{L}v}{v}|\leq \frac{C_\mathcal{L}}{2}e^{-2\alpha\w_\star^{\frac{1}{2}}  v_\star t}.
\]
In addition, it is easy to see that 
\[
|\psld{i\mathcal{N}(v)}{v}|\leq \eta(\nhu{v})\nhud{v}\leq \eta(e^{-\alpha\w_\star^{\frac{1}{2}}  v_\star t})e^{-2\alpha\w_\star^{\frac{1}{2}}  v_\star t}.
\]
In short, if $T_0$ is large enough so that $\eta(e^{-\alpha\w_\star^{\frac{1}{2}}  v_\star t})\leq \frac{C_\mathcal{L}}{2}$, we have obtained that
\[
\left|\frac{d}{dt}\nldd{v}\right|\leq 2C_\mathcal{L}e^{-2\alpha\w_\star^{\frac{1}{2}}  v_\star t}.
\]
Therefore, by integration between $t$ and $T_n$ we get 
\begin{equation}\label{eq:lemstep1-2}
\nldd{v(t)}-\nldd{v(T_n)}\leq \frac{C_\mathcal{L}}{\alpha\w_\star^{\frac{1}{2}}  v_\star }(e^{-2\alpha\w_\star^{\frac{1}{2}}  v_\star t}-e^{-2\alpha\w_\star^{\frac{1}{2}}  v_\star T_n}).
\end{equation}
Now, we take $v_\sharp$ such that
\[
\frac{C_\mathcal{L}}{\w_\star^{\frac{1}{2}}  v_\sharp}<\frac{1}{2mK}.
\]
If $v_\star>\alpha^{-1}v_\sharp$ and since $v(T_n)=0$ we get from \eqref{eq:lemstep1-2} that
\begin{equation*}
\nld{v(t)}\leq \frac{1}{\sqrt{2mK}} e^{-\alpha\w_\star^{\frac{1}{2}}  v_\star t},
\end{equation*}
which is the desired conclusion.
\end{proof}

\subsubsection*{\texorpdfstring{Step 2: $\hu$-control}{Step 2: H1-control}}

The idea of the second step of the proof of Proposition~\ref{prop:estimate2} is reminiscent of the technique used to prove stability for a single soliton in the subcritical case (see e.g. \cite{GrShSt87,GrShSt90,Le09,We85,We86}). Indeed, it is well-known that the linearization of the action functional $S_0$ (see the definition of $S_0$ p.~\ref{page:action}), whose critical points are the solutions of \eqref{eq:snls}, is coercive  on a subspace of $\hu$ of finite codimension in $\ld$. At large time, the components of the multi-soliton are well-separated and thus it is possible to localize the analysis around each soliton to gain an $\hu$-local control, up to a space of finite dimension in $\ld$. But due to Lemma~\ref{lem:step1} we are able to control the remaining $\ld$-directions, hence to close the proof. The idea of looking at localized versions of the invariants of \eqref{eq:nls} was introduced in \cite{Me90} and later developed in \cite{CoMaMe09,Ma05,MaMe06,MaMeTs06}. We shall therefore be sketchy in the proofs, highlighting only the main differences with the previous works.

We start with the case of a single soliton.
\begin{lem}[Coercivity for a soliton]\label{lem:coercivity-H-0}
Let $\w_0>0$, $\g_0\in\R$, $x_0,v_0\in\Rd$ and a solution $\Phi_0\in\hu$ of \eqref{eq:snls}. Then there exist $K_0=K_0(\Phi_0)>0$, $\nu_0\in\N\setminus\{0\}$ and $\tilde X_0^1,...,\tilde X_0^{\nu_0}\in\ld$ such that for $k=1,...,\nu_0$ we have $\nld{\tilde X^k_0}=1$ and for any $w\in\hu$ we have
\[
\nhud{w}\leq K_0 H_0(t,w)+K_0\sum_{k=1}^{\nu_0}\psld{w}{ X_0^k(t)}^2\mbox{ for all }t\in\R,
\]
where 
\begin{align*}
X^k_0(t):= & e^{i(\frac{1}{2}v_0\cdot x-\frac{1}{4}|v_0|^2t+\w_0t+\g_0)}\tilde X^k_0(x-v_0t-x_0),\\
H_0(t,w):= &\nldd{\nabla w}+\left(\w_0+\frac{|v_0|^2}{4}\right)\nldd{w}-v_0\cdot\Im\intrd\bar{w}\nabla wdx\\
&-\intrd\left(g(|R_0|^2)|w|^2+2g'(|R_0|^2)\Re(R_0\bar{w})^2\right)dx,
\end{align*}
and $ R_0 (t,x)$ is the soliton given by \eqref{eq:defsoliton}.
\end{lem}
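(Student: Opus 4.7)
My plan is to reduce the statement to the stationary case via Galilean invariance, then apply standard spectral theory to the Hessian of the action $S_0$ at $\Phi_0$ to extract the bad directions.

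First, for $w\in\hu$ I would define $\tilde w(t,y)$ via the Galilean change of variables
\[
w(t,x) = \tilde w(t, x - v_0 t - x_0)\, e^{i(\frac12 v_0\cdot x - \frac14 |v_0|^2 t + \w_0 t + \g_0)}.
\]
Using $\nabla w = (\nabla \tilde w + \tfrac{i}{2}v_0\tilde w)e^{i\theta}$, one checks by a direct computation (the contribution $v_0\cdot\Im\intrd\bar{\tilde w}\nabla \tilde w\,dy$ cancels with the corresponding term in $\nldd{\nabla w}$, while $|v_0|^2$ terms telescope) that all velocity-dependent contributions in $H_0(t,w)$ vanish, leaving
\[
H_0(t,w) = \nldd{\nabla \tilde w} + \w_0 \nldd{\tilde w} - \intrd\bigl(g(|\Phi_0|^2)|\tilde w|^2 + 2 g'(|\Phi_0|^2)\Re(\Phi_0\bar{\tilde w})^2\bigr)dy =: Q_0(\tilde w).
\]
This quadratic form is precisely $\psld{\mathcal H_0 \tilde w}{\tilde w}$, where $\mathcal H_0$ is the $\R$-linear self-adjoint matrix Schrödinger operator on $\ld$ (viewed as a real Hilbert space) given by
\[
\mathcal H_0 \tilde w := -\D \tilde w + \w_0 \tilde w - g(|\Phi_0|^2)\tilde w - 2 g'(|\Phi_0|^2)\Re(\Phi_0\bar{\tilde w})\Phi_0.
\]

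Next I would invoke spectral theory for $\mathcal H_0$. Since $\Phi_0\in\hu$ decays exponentially (see the remark after (A1)-(A3)), the potential part of $\mathcal H_0$ is a relatively compact perturbation of $-\D+\w_0$. Weyl's theorem then yields $\sigma_{\mathrm{ess}}(\mathcal H_0) = [\w_0,+\infty)$, so the spectrum of $\mathcal H_0$ lying in $(-\infty, \w_0)$ consists of finitely many eigenvalues of finite multiplicity. I would pick $\tilde X_0^1,\ldots,\tilde X_0^{\nu_0}$ to be an $\ld$-orthonormal basis of the spectral subspace associated with the eigenvalues in $(-\infty,\w_0/2]$; this finite-dimensional subspace contains the generalized kernel coming from the phase and translation invariances of \eqref{eq:snls} as well as any negative direction of $\mathcal H_0$ (if $\Phi_0$ is not a ground state one expects several of them, but their number stays finite). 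By the spectral theorem, on its $\ld$-orthogonal complement one has $Q_0(\tilde w) \geq (\w_0/2)\nldd{\tilde w}$; decomposing a general $\tilde w$ along the $\tilde X_0^k$'s and absorbing the cross terms then gives
\[
\nldd{\tilde w} \leq C\, H_0(t,w) + C\sum_{k=1}^{\nu_0} \psld{\tilde w}{\tilde X_0^k}^2.
\]

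To upgrade this to an $\hu$-bound I would rewrite $\nldd{\nabla \tilde w} = Q_0(\tilde w) - \w_0\nldd{\tilde w} + \intrd(\cdots)\,dy$, control the potential integrals by $C\nldd{\tilde w}$, and combine with the previous inequality to obtain $\nhud{\tilde w}\leq C\,H_0(t,w)+C\sum_k\psld{\tilde w}{\tilde X_0^k}^2$. Since the $\hu$-norms of $w$ and $\tilde w$ are equivalent up to $\ld$-controlled lower order terms (with constants depending on $|v_0|$), the same bound holds for $w$. Setting finally
\[
X_0^k(t,x) := e^{i(\frac12 v_0\cdot x - \frac14 |v_0|^2 t + \w_0 t + \g_0)}\, \tilde X_0^k(x-v_0 t - x_0),
\]
the identity $\psld{w}{X_0^k(t)} = \psld{\tilde w}{\tilde X_0^k}$ translates this into the statement with some $K_0=K_0(\Phi_0)$. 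The only delicate step is the spectral analysis: because $\Phi_0$ may be a complex-valued excited state, no explicit eigenfunction information is available and one must rely solely on the abstract finiteness of the discrete spectrum of $\mathcal H_0$ below $\w_0$ and on the exponential decay of $\Phi_0$ to justify the spectral gap used above; this is precisely where the present argument departs from the ground-state-specific strategies of \cite{MaMe06,CoMaMe09}.
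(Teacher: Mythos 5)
Your proposal is correct and follows essentially the same route as the paper's proof in Appendix~\ref{ap:coercivity}: a Galilean change of variables reduces $H_0(t,w)$ to the stationary quadratic form $\tilde H_0(z)=\nldd{\nabla z}+\w_0\nldd{z}-\intrd\big(g(|\Phi_0|^2)|z|^2+2g'(|\Phi_0|^2)\Re(\Phi_0\bar z)^2\big)dx$, whose associated self-adjoint operator on $L^2(\Rd,\R^2)$ is a relatively compact perturbation of $-\D+\w_0$, so that only finitely many eigenvalues lie below the essential spectrum $[\w_0,+\infty)$ and the corresponding spectral subspace furnishes the directions $\tilde X_0^k$. The subsequent upgrade from $\ld$ to $\hu$ control and the transfer back to $w$ (with constants depending on $|v_0|$) likewise match the paper's argument.
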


Lemma~\ref{lem:coercivity-H-0} follows from standard arguments. We included a proof in \ref{ap:coercivity} for the reader's convenience.

We introduce now the localization procedure around each component of the multi-soliton.

We begin by the selection of a particular direction of propagation.

\begin{claim}\label{cl:choicedirection}
Let $0<\alpha<\sin\left(\frac{\sqrt{\pi}\Gamma(\frac{d-1}{2})}{N(N-1)\Gamma(\frac{d}{2})}\right)$. Then there exists an orthonormal basis $(e_1,...,e_d)$ of $\Rd$ such that for all $j,k=1,...,N$,  we have
\[
|(v_j-v_k,e_1)_{\Rd}|\geq \alpha|v_j-v_k|.
\]
\end{claim}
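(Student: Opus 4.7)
The plan is a measure-theoretic argument on the unit sphere $S^{d-1}$: I will show that the set of directions $e \in S^{d-1}$ which fail the conclusion for at least one pair $(j,k)$ has measure strictly less than that of the whole sphere, so a good direction $e_1$ exists, and I then extend it to an orthonormal basis.

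For each pair $j\neq k$ with $v_j\neq v_k$ (the case $v_j=v_k$ is trivial), introduce the unit vector $u_{jk}:=(v_j-v_k)/|v_j-v_k|$ and the \emph{bad set}
\[
B_{jk}:=\{e\in S^{d-1}:|(u_{jk},e)_{\Rd}|<\alpha\}.
\]
Since $B_{jk}=B_{kj}$, there are at most $\binom{N}{2}$ distinct bad sets. Parametrizing $S^{d-1}$ by $(\theta,\omega)\in[0,\pi]\times S^{d-2}$, where $\theta$ is the angle from $u_{jk}$, the condition $|(u_{jk},e)|=|\cos\theta|<\alpha$ becomes $|\theta-\pi/2|<\arcsin\alpha$. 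Using the standard volume form $\sin^{d-2}\theta\,d\theta\,d\omega$ and bounding $\sin^{d-2}\theta\le 1$, I obtain
\[
\sigma(B_{jk})\le \sigma(S^{d-2})\int_{\pi/2-\arcsin\alpha}^{\pi/2+\arcsin\alpha}\sin^{d-2}\theta\,d\theta\le 2\arcsin(\alpha)\,\sigma(S^{d-2}).
\]

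Summing over pairs gives
\[
\sigma\Bigl(\bigcup_{j<k}B_{jk}\Bigr)\le \binom{N}{2}\cdot 2\arcsin(\alpha)\,\sigma(S^{d-2})=N(N-1)\arcsin(\alpha)\,\sigma(S^{d-2}).
\]
Recalling $\sigma(S^{d-1})=2\pi^{d/2}/\Gamma(d/2)$ and $\sigma(S^{d-2})=2\pi^{(d-1)/2}/\Gamma((d-1)/2)$, one has $\sigma(S^{d-1})/\sigma(S^{d-2})=\sqrt{\pi}\,\Gamma((d-1)/2)/\Gamma(d/2)$. The hypothesis $\alpha<\sin\bigl(\frac{\sqrt{\pi}\,\Gamma((d-1)/2)}{N(N-1)\,\Gamma(d/2)}\bigr)$ is exactly
\[
\arcsin(\alpha)<\frac{1}{N(N-1)}\cdot\frac{\sigma(S^{d-1})}{\sigma(S^{d-2})},
\]
so $\sigma\bigl(\bigcup_{j<k}B_{jk}\bigr)<\sigma(S^{d-1})$. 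The complement is therefore non-empty; pick any $e_1$ in it, and complete it into an orthonormal basis $(e_1,\dots,e_d)$ of $\Rd$ by Gram--Schmidt. For every $j\neq k$ one has $e_1\notin B_{jk}$, that is $|(v_j-v_k,e_1)_{\Rd}|\ge\alpha|v_j-v_k|$, which is the stated inequality.

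The argument is essentially a Borsuk-type counting on the sphere, and the only genuinely non-trivial point is matching the combinatorial/geometric constant with the Gamma-function expression in the statement; everything else is bookkeeping. No obstacle of substance is expected — the constant is designed exactly so that the union of bad equatorial bands does not cover $S^{d-1}$.
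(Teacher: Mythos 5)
Your proof is correct and follows essentially the same route as the paper: bound the spherical measure of each equatorial band $\{|(u_{jk},e)|<\alpha\}$ by $2\arcsin(\alpha)\,\sigma(S^{d-2})$, sum over the $\binom{N}{2}$ pairs, and check that the hypothesis on $\alpha$ makes the union smaller than $\sigma(S^{d-1})$ via the ratio $\sigma(S^{d-1})/\sigma(S^{d-2})=\sqrt{\pi}\,\Gamma(\tfrac{d-1}{2})/\Gamma(\tfrac{d}{2})$. Your bookkeeping of the surface-area constants is, if anything, slightly more careful than the paper's, and the conclusion matches.
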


\begin{proof}
For $j\neq k$, set $v_{jk}:=\frac{v_j-v_k}{|v_j-v_k|}$. The claim will be proved if we show that the measure of the set 
\[
\Lambda:=\bigcup_{\stackrel{j,k=1,...,N}{j\neq k}}\{w\in \mathbb{S}^{d-1},|(v_{jk},w)_{\Rd}|\leq \alpha \}
\]
is smaller than the measure of the surface of the unit sphere $\mathbb{S}^{d-1}$.

Take $j,k=1,...,N$; $j\neq k$. Without loss of generality, assume that $v_{jk}=(1,0,...,0)$. Take $w\in\mathbb{S}^{d-1} $ and let $(\theta_1,...,\theta_{d-1})$ be the spherical coordinates of $w$. Then we have 
\[
(v_{jk},w)_{\Rd}=\cos{\theta_1}.
\]
Therefore, after easy calculations we get
\[
\mu(\{w\in \mathbb{S}^{d-1},|(v_{jk},w)_{\Rd}|\leq \alpha \})\leq 2\arcsin(\alpha)
\frac{\pi^{\frac{d-1}{2}}}{\Gamma(\frac{d-1}{2})} 
\]
where $\mu$ is the Lebesgues measure on $\mathbb{S}^{d-1}$ and $\frac{\pi^{\frac{d-1}{2}}}{\Gamma(\frac{d-1}{2})}$ is the area of the $(d-2)$-unit sphere. By subadditivity of the measure this leads to 
\[
\mu(\Lambda)\leq N(N-1)\arcsin(\alpha) \frac{\pi^{\frac{d-1}{2}}}{\Gamma(\frac{d-1}{2})}.
\]
Now, remember that
\[
0<\alpha<\sin\left(\frac{\sqrt{\pi}\Gamma(\frac{d-1}{2})}{N(N-1)\Gamma(\frac{d}{2})}\right).
\]
This implies 
\[
\mu(\Lambda)\leq N(N-1)\arcsin(\alpha) \frac{\pi^{\frac{d-1}{2}}}{\Gamma(\frac{d-1}{2})}<\frac{\pi^{\frac{d}{2}}}{\Gamma(\frac{d}{2})}=\mu(\mathbb{S}^{d-1}).
\]
Therefore $\mu(\mathbb{S}^{d-1}\setminus \Lambda)>0$ and we can pick up $e_1\in\mathbb{S}^{d-1}$ such that for all $j,k=1,...,N$,  we have
\[
|(v_j-v_k,e_1)_{\Rd}|\geq \alpha |v_j-v_k|.
\]
Completing $e_1$ into an orthonormal basis $(e_1,...,e_d)$ of $\Rd$ finishes the proof.
\end{proof}

By invariance of \eqref{eq:nls} with respect to orthonormal transformations we can assume without loss of generality that the basis $(e_1,...,e_d)$ is the canonical basis of $\Rd$. Up to a changes of indices, we can also assume that $v^1_1<...<v_N^1$ where the exponent $1$ in $v^1_{j}$ denote the first coordinate of $v_j=(v^1_j,...,v^d_j)$. 

Let $\psi:\R\to\R$ be a $\mathcal{C}^\infty$ cut-off function such that 
$\psi(s)=0$ for $s<-1$, $\psi(s)\in[0,1]$ if $s\in[-1,1]$ and $\psi(s)=1$ for $s>1$. We define 
\begin{gather*}
m_j:=\frac{1}{2}(v^1_{j-1}+v^1_{j})\mbox{ for }j=2,...,N,\\
\psi_1(t,x):=1,\;
\psi_j(t,x):=\psi(\frac{1}{\sqrt{t}}(x^1-m_jt))\mbox{ for }j=2,...,N.
\end{gather*}
Then we can define
\[
\phi_j=\psi_j-\psi_{j+1}\mbox{ for }j=1,...,N-1,\;\phi_N:=\psi_N.
\]
We introduce localized versions of the energy, charge and momentum. For $j=1,...,N$ we define
\begin{gather*}
E_j(t,w):=\frac{1}{2}\intrd|\nabla w|^2\phi_jdx-\intrd F(w)\phi_jdx,\;\\
M_j(t,w):=\intrd|w|^2\phi_jdx,\;
P_j(t,w):=\Im\intrd (\nabla w) \bar{w}\phi_jdx.
\end{gather*}
We denote by $S_j$ a localized action defined for $w\in\hu$ by
\[
S_j(t,w):=E_j(t,w)+\frac{1}{2}\left(\w_j+\frac{|v_j|^2}{4}\right)M_j(t,w)-\frac{1}{2}v_j\cdot P_j(t,w)
\]
and by $H_j$ a localized linearized defined for $w\in\hu$ by
\begin{multline*}
H_j(t,w):= \intrd|\nabla w|^2\phi_jdx-\intrd\left(g(|R_j|^2)|w|^2+2g'(|R_j|^2)\Re(R_j\bar{w})^2\right)\phi_jdx\\
+\left(\w_j+\frac{|v_j|^2}{4}\right)\intrd|w|^2\phi_jdx-v_j\cdot\Im\intrd\bar{w}\nabla w\phi_jdx.
\end{multline*}
We define an action-like functional for multi-solitons 
\[
\mathcal{S}(t,w):=\sum_{j=1}^NS_j(t,w)
\]
and a corresponding linearized
\[
\mathcal{H}(t,w):=\sum_{j=1}^N H_j(t,w).
\]

We have the following coercivity property on $\mathcal{H}$.

\begin{lem}[Coercivity for the multi-soliton]\label{lem:coercivity-global}
There exists $K=K(\Phi_1,...,\Phi_N)>0$ such that for all $t$ large enough and for all $w\in\hu$ we have
\[
\nhud{w}\leq K\mathcal{H}(t,w)+K\sum_{j=1}^N\sum_{l=1}^{\nu_j}\psld{w}{ X_j^l(t)}^2,
\]
where $(\nu_j)$, $( X_j^l)$ are given for each $R_j$ by Lemma~\ref{lem:coercivity-H-0}.
\end{lem}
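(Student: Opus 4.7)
The strategy is to reduce the multi-soliton coercivity to the single-soliton estimate of Lemma~\ref{lem:coercivity-H-0} through a partition-of-unity argument, exploiting that the soliton centres move apart at speed $\gtrsim v_\star$ in $t$.

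First, I would introduce auxiliary smooth cut-offs $\tilde\phi_1,\ldots,\tilde\phi_N$ satisfying $\sum_j \tilde\phi_j^2\equiv 1$, with $\tilde\phi_j$ supported in a thickened version of $\{\phi_j=1\}$, specifically contained in a strip of the form $\{m_j t-2\sqrt t\le x^1\le m_{j+1}t+2\sqrt t\}$ (with obvious modifications for $j=1$ and $j=N$), and with $|\nabla\tilde\phi_j|\le C/\sqrt t$. For $t$ large, the supports are essentially disjoint, and on $\operatorname{supp}(\tilde\phi_j)$ each other soliton satisfies $|R_k|+|\nabla R_k|\le Ce^{-c\sqrt t}$ for $k\ne j$, by exponential decay of $\Phi_k$ combined with the centre separation $|v_j-v_k|t\gtrsim v_\star t$.

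For each $j$, apply Lemma~\ref{lem:coercivity-H-0} to $\tilde\phi_j w$ relative to the $j$-th soliton:
\[
\nhud{\tilde\phi_j w}\le K_j\,H_j^{\mathrm{full}}(t,\tilde\phi_j w)+K_j\sum_{l=1}^{\nu_j}\psld{\tilde\phi_j w}{X_j^l(t)}^2,
\]
where $H_j^{\mathrm{full}}$ denotes the (non-localized) single-soliton linearized functional around $R_j$. The core of the proof is then the quantitative comparison of $H_j^{\mathrm{full}}(t,\tilde\phi_j w)$ with $H_j(t,w)$. Expanding $|\nabla(\tilde\phi_j w)|^2=\tilde\phi_j^2|\nabla w|^2+|\nabla\tilde\phi_j|^2|w|^2+\nabla(\tilde\phi_j^2)\cdot\Re(\bar w\nabla w)$ and using that $\tilde\phi_j^2$ differs from $\phi_j$ only on a transition layer of width $\sqrt t$, the volume part of $H_j^{\mathrm{full}}(t,\tilde\phi_j w)$ reconstructs $H_j(t,w)$ up to three controllable error terms: cross-gradient contributions bounded by $Ct^{-1/2}\nhud{w}$; the substitution of $g(|R_j|^2),\,g'(|R_j|^2)R_j$ by $g(|R|^2),\,g'(|R|^2)R$ on $\operatorname{supp}(\tilde\phi_j)$, which costs at most $Ce^{-c\sqrt t}\nldd{w}$; and the identity $\psld{\tilde\phi_j w}{X_j^l(t)}=\psld{w}{X_j^l(t)}+\mathcal{O}(e^{-c\sqrt t})\nld{w}$, valid because $X_j^l(t)$ is exponentially concentrated at the centre of $R_j$, where $\tilde\phi_j\equiv 1$.

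Summing the estimates over $j$ and using the IMS-type identity $\sum_j\nhud{\tilde\phi_j w}=\nhud{w}+\sum_j\int|\nabla\tilde\phi_j|^2|w|^2\,dx\ge\nhud{w}$, one arrives at
\[
\nhud{w}\le K\,\mathcal{H}(t,w)+K\sum_{j=1}^N\sum_{l=1}^{\nu_j}\psld{w}{X_j^l(t)}^2+\varepsilon(t)\nhud{w},
\]
with $\varepsilon(t)\to 0$ as $t\to+\infty$. For $t$ sufficiently large, $\varepsilon(t)\le 1/2$ can be absorbed into the left-hand side, giving the lemma with $K$ depending only on the single-soliton constants $K_j$, hence only on $\Phi_1,\ldots,\Phi_N$. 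The main technical obstacle I anticipate is the coherent bookkeeping of the three sources of error --- Leibniz cross-gradients of order $t^{-1/2}$, exponentially small substitution errors, and the mismatch between the piecewise cut-off $\phi_j$ and the smooth localization $\tilde\phi_j^2$ --- and verifying that the single choice of transition width $\sqrt t$ makes all three simultaneously negligible; this is precisely why the conclusion requires $t$ large, not merely $v_\star$ large.
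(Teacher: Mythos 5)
Your overall strategy --- localize $w$ near each soliton with a quadratic partition of unity, apply the single-soliton coercivity of Lemma~\ref{lem:coercivity-H-0} to each localized piece, and recombine via the IMS identity --- is exactly the argument the paper invokes (its proof is a one-line citation of \cite[Lemma 4.1]{MaMeTs06}), so the plan is the right one. However, your specific choice of cut-offs leaves a gap. You introduce new functions $\tilde\phi_j$ with $\sum_j\tilde\phi_j^2\equiv1$ that are \emph{not} related by $\tilde\phi_j^2=\phi_j$ to the weights $\phi_j$ appearing in the definition of $H_j$ (note also that the $\phi_j$ of the paper are already smooth, not sharp cut-offs). On each transition layer the difference $\tilde\phi_j^2-\phi_j$ is of size $O(1)$ on a slab of width $\sim\sqrt t$. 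For the potential terms this is harmless (every $R_k$ is exponentially small there), and the pure gradient terms telescope when summed over $j$ since their coefficient is $1$ for every $j$; but the mass terms carry the coefficients $\w_j+\tfrac{|v_j|^2}{4}$ and the momentum terms carry $v_j$, which change from one index to the next. After summation you are therefore left with the uncontrolled quantity
\[
\sum_{j=1}^N\Bigl[\bigl(\w_j+\tfrac{|v_j|^2}{4}\bigr)\int_{\Rd}(\tilde\phi_j^2-\phi_j)|w|^2\,dx-v_j\cdot\Im\int_{\Rd}(\tilde\phi_j^2-\phi_j)\bar w\nabla w\,dx\Bigr],
\]
which for $w$ concentrated on a transition layer is of size $O(1)\nhud{w}$ with no sign, not $o(1)\nhud{w}$; none of the three error sources you list accounts for it, and it cannot be absorbed. (A minor additional point: the substitution error you do list, replacing $g(|R_j|^2)$ by $g(|R|^2)$, is not needed, since $H_j$ is defined with $R_j$ and not with $R$.)

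The repair is the standard one from \cite{MaMeTs06}: choose the profile $\psi$ so that $\sqrt\psi$ and $\sqrt{1-\psi}$ are $\mathcal C^1$ with bounded derivative, and take $\tilde\phi_j:=\sqrt{\phi_j}$, which then satisfies $|\nabla\sqrt{\phi_j}|\le C t^{-1/2}$. With $\tilde\phi_j^2=\phi_j$ exactly, every zeroth-order term of the full quadratic form evaluated at $\sqrt{\phi_j}\,w$ coincides with the corresponding term of $H_j(t,w)$; the momentum cross term vanishes identically because $\Im\bigl(|w|^2\sqrt{\phi_j}\,\nabla\sqrt{\phi_j}\bigr)=0$; and the only surviving errors are the gradient cross terms, of size $Ct^{-1/2}\nhud{w}$, together with the corrections $\psld{\sqrt{\phi_j}\,w}{X_j^l}=\psld{w}{X_j^l}+o(1)\nld{w}$, for which mere $L^2$ membership of $\tilde X_j^l$ suffices since $1-\sqrt{\phi_j}$ is supported at distance $\gtrsim v_\star t$ from the centre of $R_j$. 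With this modification the rest of your argument --- the IMS lower bound $\sum_j\nhud{\sqrt{\phi_j}\,w}\ge\nhud{w}$, a common constant $K$ (admissible because $H_j^{\mathrm{full}}+\sum_l\psld{\cdot}{X_j^l}^2\ge0$ by the single-soliton lemma), and the absorption of the $o(1)\nhud{w}$ error for $t$ large --- goes through and yields the lemma.
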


\begin{proof}
It is a consequence of Lemma~\ref{lem:coercivity-H-0}  (see \cite[Lemma 4.1]{MaMeTs06}).
\end{proof}

\begin{lem}\label{lem:loc-S}
The following equality holds
\[
S_j(t,u(t))=S_j(t,R_j)+H_j(t,v)+O(e^{-3\alpha\w_\star^{\frac{1}{2}}  v_\star t})+o(\nhud{v}).
\]
\end{lem}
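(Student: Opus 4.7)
The plan is a second-order Taylor expansion of $S_j(t,\cdot)$ at $R_j$ (or equivalently at $R$), combined with the exponential spatial separation of the solitons and the exponential decay of bound states to show that all cross contributions fall into the $O(e^{-3\alpha\w_\star^{1/2}v_\star t})$ error. The three key inputs are: $R_j$ is a critical point of the unlocalized action in its moving frame (thanks to \eqref{eq:snls}); each $R_k$ with $k\ne j$ is exponentially small on $\mathrm{supp}(\phi_j)$; and $R_j$ is exponentially small on $\mathrm{supp}(\nabla\phi_j)$. The last two facts are guaranteed by Claim~\ref{cl:choicedirection} and the factor $1/9$ in the definition of $v_\star$: the distances between $\mathrm{supp}(\phi_j)$ and the center of $R_k$, and between $\mathrm{supp}(\nabla\phi_j)$ and the center of $R_j$, both grow like $\alpha v_\star t$, while each bound state decays at rate $\sqrt{\w_\star}$.

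First, I would use the $\mathcal C^2$-smoothness of $S_j(t,\cdot)$ on $\hu$ (which follows from (A1)--(A2) and Sobolev embedding) to write
\[
S_j(t,u)=S_j(t,R)+dS_j(t,R)(v)+\tfrac12 d^2S_j(t,R)(v,v)+r(t,v),
\]
with a cubic-and-higher remainder $|r(t,v)|\le C(\nhu{v}^3+\nhu{v}^{p+1})=o(\nhud{v})$ by the bootstrap \eqref{eq:H1est}. A direct computation gives $d^2 S_j(t,R_j)(v,v)=H_j(t,v)$ (up to the $\tfrac12$ Taylor convention; the structure matches the definition of $H_j$). Replacing $R$ by $R_j$ in the nonlinearity introduces differences of the form $g(|R|^2)-g(|R_j|^2)$ and $g'(|R|^2)R^2-g'(|R_j|^2)R_j^2$. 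On $\mathrm{supp}(\phi_j)$ these are controlled by $\|\textstyle\sum_{k\ne j}R_k\|_{L^\infty(\mathrm{supp}\phi_j)}=O(e^{-c_0 t})$ for some $c_0>2\alpha\w_\star^{1/2}v_\star$; combined with the bootstrap $\nhu{v}\le e^{-\alpha\w_\star^{1/2}v_\star t}$ this makes the discrepancy $O(e^{-3\alpha\w_\star^{1/2}v_\star t})$. The same reasoning applied directly to $S_j(t,R)-S_j(t,R_j)$ (where no $v$ is present) gives an $O(e^{-c_0 t})$ error, well below the required order.

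Next, for the linear part, I would split $dS_j(t,R)(v)=dS_j(t,R_j)(v)+[dS_j(t,R)-dS_j(t,R_j)](v)$; the bracketed difference is handled as above and yields $O(e^{-c_0 t})\nhu{v}=O(e^{-3\alpha\w_\star^{1/2}v_\star t})$. For $dS_j(t,R_j)(v)$, I would integrate by parts the kinetic term $\intrd\nabla R_j\cdot\nabla\bar v\,\phi_j\,dx$ and expand $\nabla R_j$ and $\Delta R_j$ using the Galilean phase $\theta_j=\tfrac12 v_j\cdot x-\tfrac14|v_j|^2 t+\w_jt+\g_j$. After collecting all bulk terms (those carrying $\phi_j$ but no $\nabla\phi_j$), the profile equation $-\Delta\Phi_j+\w_j\Phi_j-f(\Phi_j)=0$ forces complete cancellation, leaving only commutator terms proportional to $\nabla\phi_j$ times $R_j$ or $\nabla R_j$. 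Since $\mathrm{supp}(\nabla\phi_j)\subset\{|x^1-m_jt|\le\sqrt t\}$ lies at distance $\ge\tfrac{9}{2}\alpha v_\star t-\sqrt t$ from $v_jt+x_j$, the exponential decay of $\Phi_j$ gives $|R_j|+|\nabla R_j|=O(e^{-c_0 t})$ on this set, so these commutator terms contribute $O(e^{-c_0 t})\nhu{v}=O(e^{-3\alpha\w_\star^{1/2}v_\star t})$.

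The main obstacle is the exact algebraic cancellation inside $dS_j(t,R_j)(v)$: one must check that the terms involving $-iv_j\cdot\nabla\Phi_j$ coming from the expansion of $\Delta R_j$ cancel precisely against those produced by the momentum piece $-v_j\cdot\Im\intrd\bar v\,\nabla R_j\,\phi_j\,dx$ of $dS_j$, and that the $|v_j|^2/4$ terms balance between the Laplacian and the coefficient $\w_j+|v_j|^2/4$ appearing in $S_j$. This is the standard computation expressing that $R_j$ is critical for the full Galilean-adapted action (with $\phi_j\equiv 1$), and it is precisely what dictates the choice of the coefficients $\w_j+|v_j|^2/4$ and $\tfrac12 v_j\cdot P_j$ in the definition of $S_j$. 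Once this cancellation is in place, everything else is routine bookkeeping with the exponential bounds established above, and the four error contributions combine to give the announced identity.
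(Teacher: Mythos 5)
Your proposal is correct and follows essentially the same route as the paper's proof: a second-order Taylor expansion of $S_j$ in $v$, the separation estimates of Claim~\ref{cl:exponentialestimatesonRj} to replace $R$ by $R_j$ and to control the commutator terms supported on $\mathrm{supp}\,\nabla\phi_j$, and the Galilean-adapted elliptic equation \eqref{eq:eqsolevedbyR} satisfied by $R_j$ to kill the first-order bulk term. The only slight imprecision is in the zeroth-order term $S_j(t,R)-S_j(t,R_j)$, where a rate $c_0>2\alpha\w_\star^{1/2}v_\star$ alone would not reach $O(e^{-3\alpha\w_\star^{1/2}v_\star t})$; one needs (and the geometry does give, exactly as encoded in the extra spatial factor of Claim~\ref{cl:exponentialestimatesonRj}) the additional gain coming from the separation of the two exponentially localized profiles in each cross term.
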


The proof relies on the following claim.

\begin{claim}\label{cl:exponentialestimatesonRj}
For all $x\in\Rd$ and $j,k=1,...,N$ the following inequalities holds.
\begin{gather*}
(|R_k(t,x)|+|\nabla R_k(t,x)|)\phi_j(t,x)\leq Ce^{-2\alpha\w_\star^{\frac{1}{2}} v_\star t}e^{-\frac{\w_\star^{\frac{1}{2}}}{2}|x-v_kt-x_k|}\mbox{ for }j\neq k,\\
(|R_j(t,x)|+|\nabla R_j(t,x)|)(1-\phi_j(t,x))\leq Ce^{-2\alpha\w_\star^{\frac{1}{2}} v_\star t}e^{-\frac{\w_\star^{\frac{1}{2}}}{2}
|x-v_jt-x_j|}.
\end{gather*}
\end{claim}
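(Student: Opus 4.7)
The plan is to combine the exponential decay of bound states with a geometric analysis of the supports of $\phi_j$ and $1-\phi_j$. Using the decay estimate recalled after (A3), since $\omega_\star < \omega_j$ for every $j$, there is a uniform constant with $|\Phi_j(y)|+|\nabla\Phi_j(y)|\le C e^{-\sqrt{\omega_\star}|y|}$, and therefore $|R_k(t,x)|+|\nabla R_k(t,x)|\le C e^{-\sqrt{\omega_\star}|x-v_kt-x_k|}$. I would then split this bound into two halves, $e^{-\sqrt{\omega_\star}|y|}=e^{-\frac{\sqrt{\omega_\star}}{2}|y|}\cdot e^{-\frac{\sqrt{\omega_\star}}{2}|y|}$, keep one half as the right-hand side of the targeted inequality, and use the other half to absorb a lower bound on $|x-v_kt-x_k|$ valid on the support of the relevant cutoff.

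The geometric step rests on Claim~\ref{cl:choicedirection}: after the relabelling $v_1^1<\cdots<v_N^1$ one has $v_{j+1}^1-v_j^1\ge 9\alpha v_\star$ for every $j$. For the first inequality, on $\mathrm{supp}\,\phi_j$ one has $x^1\ge m_jt-\sqrt{t}$ and $x^1\le m_{j+1}t+\sqrt{t}$ (with the obvious truncation for $j=1$ or $j=N$). Since $m_j=\frac{1}{2}(v_{j-1}^1+v_j^1)$ lies strictly between $v_{j-1}^1$ and $v_j^1$, a direct computation shows that for any $k\ne j$,
\[
|x^1-v_k^1 t-x_k^1|\ge \tfrac{9\alpha v_\star}{2}\,t-\sqrt{t}-|x_k^1|\ge 4\alpha v_\star t
\]
provided $t$ is sufficiently large. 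For the second inequality, being in $\mathrm{supp}(1-\phi_j)$ means either $\psi_j<1$, i.e.\ $x^1<m_jt+\sqrt{t}$, placing $x$ to the left of the $j$-th trajectory by at least $(v_j^1-m_j)t-\sqrt{t}=\frac{1}{2}(v_j^1-v_{j-1}^1)t-\sqrt{t}$, or $\psi_{j+1}>0$, i.e.\ $x^1>m_{j+1}t-\sqrt{t}$, placing it symmetrically to the right; the same arithmetic yields $|x-v_jt-x_j|\ge 4\alpha v_\star t$ for $t$ large.

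Inserting this into one of the halves of the exponential gives, on $\mathrm{supp}\,\phi_j$,
\[
(|R_k|+|\nabla R_k|)\,\phi_j\;\le\;C\, e^{-\frac{\sqrt{\omega_\star}}{2}|x-v_kt-x_k|}\cdot e^{-\frac{\sqrt{\omega_\star}}{2}\cdot 4\alpha v_\star t}\;=\;C\,e^{-2\alpha\sqrt{\omega_\star} v_\star t}\,e^{-\frac{\sqrt{\omega_\star}}{2}|x-v_kt-x_k|},
\]
and the inequality extends trivially to the complement of $\mathrm{supp}\,\phi_j$ where the left-hand side vanishes. The second inequality is obtained identically. There is no real difficulty: the one point demanding care is bookkeeping, namely treating the endpoint indices $j=1$ and $j=N$ separately and absorbing the $O(\sqrt{t})$ width of the transition region of $\psi$ into the linear-in-$t$ lower bound, which is precisely why the statement is to be understood for $t$ large (as is assumed throughout Section~2).
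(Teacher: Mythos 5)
Your proof is correct and follows exactly the route the paper intends: the paper's own proof is a one-line appeal to the support properties of $\phi_j$, the definitions of $\w_\star$ and $v_\star$, and the exponential decay of $\Phi_j$, and your argument is precisely that reasoning with the details (the splitting of the exponential, the lower bound $|x^1-v_k^1t-x_k^1|\geq \frac{9\alpha v_\star}{2}t-\sqrt{t}-|x_k^1|\geq 4\alpha v_\star t$ on the relevant supports, and the endpoint cases $j=1,N$) written out. Your observation that the estimate is to be read for $t$ large is also consistent with how the claim is used, namely only on $[T_0,T_n]$ with $T_0$ large.
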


\begin{proof}
The claim follows immediately from the support properties of $\phi_j$, the definitions of $\w_\star$ and $v_\star$ and exponential decay of $\Phi_j$.
\end{proof}

\begin{proof}[Proof of Lemma~\ref{lem:loc-S}]
The proof is done by writing $u(t)= R(t) +v(t)$ and expanding in the definition of $S
_j$. We start with the terms of order $0$ in $v$. By Claim~\ref{cl:exponentialestimatesonRj} we have
\begin{equation}\label{eq:Sjest-begin}
S_j(t,R)= S_j(t,R_j)+O(e^{-4\alpha\w_\star^{\frac{1}{2}}  v_\star t}).
\end{equation}
We now look at the terms of order $1$ in $v$. Still by Claim~\ref{cl:exponentialestimatesonRj}, taking in addition into account that $\nhu{v}=O(e^{-\alpha\w_\star^{\frac{1}{2}} v_\star t})$ and remembering the equation solved by $R_j$ (see \eqref{eq:eqsolevedbyR}) we obtain,
\begin{align}
\dual{S_j'(t,R)}{v}&= \dual{S'_j(t,R_j)}{v}+O(e^{-3\alpha\w_\star^{\frac{1}{2}}  v_\star t})=O(e^{-3\alpha\w_\star^{\frac{1}{2}}  v_\star t}),\\
\dual{S_j''(t,R)v}{v}&=H_j(t,v)+O(e^{-3\alpha\w_\star^{\frac{1}{2}}  v_\star t})+o(\nhud{v})\label{eq:Sjest-end}.
\end{align}
Gathering \eqref{eq:Sjest-begin}-\eqref{eq:Sjest-end} we obtain the following expansion
\[
S_j(t,u(t))=S_j(t,R_j)+H_j(t,v)+O(e^{-3\alpha\w_\star^{\frac{1}{2}}  v_\star t})+o(\nhud{v}),
\]
which concludes the proof.
\end{proof}

We can now write a Taylor-like expansion for $\mathcal{S}$.
\begin{lem}\label{lem:S-eexpansion}
We have 
\[
\mathcal{S}(t,u)-\mathcal{S}(t,R)=\mathcal{H}(t,v)+o(\nhud{v})+O(e^{-3\alpha\w_\star^{\frac{1}{2}}  v_\star  t}).
\]
\end{lem}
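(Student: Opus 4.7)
The plan is to reduce Lemma \ref{lem:S-eexpansion} to summing Lemma \ref{lem:loc-S} over $j$, and then to use Claim \ref{cl:exponentialestimatesonRj} to replace $\sum_j S_j(t,R_j)$ by $\mathcal{S}(t,R) = \sum_j S_j(t,R)$ up to exponentially small errors.

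First I would write $\mathcal{S}(t,u) = \sum_{j=1}^N S_j(t,u)$ and apply Lemma \ref{lem:loc-S} to each term, which yields
\[
\mathcal{S}(t,u) = \sum_{j=1}^N S_j(t,R_j) + \mathcal{H}(t,v) + O(e^{-3\alpha\w_\star^{1/2}v_\star t}) + o(\nhud{v}).
\]
Here the remainders simply add up (there are $N$ of them, which is fixed, so the bounds survive). This step is essentially bookkeeping.

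The next step is to show that $\mathcal{S}(t,R) = \sum_{j=1}^N S_j(t,R_j) + O(e^{-3\alpha\w_\star^{1/2}v_\star t})$. To do so I would expand $S_j(t,R)$ with $R=\sum_k R_k$: the diagonal contribution is exactly $S_j(t,R_j)$ (taking into account that $\phi_j = 1$ on a region where $R_j$ is essentially supported and the missing piece of $R_j$ on $1-\phi_j$ is exponentially small by Claim \ref{cl:exponentialestimatesonRj}), while the off-diagonal contributions involve products of $R_k$ for $k \ne j$ with $R_j$ or with itself, each localized to the support of $\phi_j$. By Claim \ref{cl:exponentialestimatesonRj}, for $k\neq j$ the quantities $|R_k|\phi_j$ and $|\nabla R_k|\phi_j$ carry a factor $e^{-2\alpha\w_\star^{1/2} v_\star t}$ together with spatial exponential decay, so after integration (using $L^\infty$ bounds on the $R_j$'s and the local growth bound on the nonlinearity $F$ provided by (A1)-(A2)) every cross term is $O(e^{-4\alpha\w_\star^{1/2}v_\star t})$, which is absorbed in $O(e^{-3\alpha\w_\star^{1/2}v_\star t})$.

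Subtracting the two displays gives the lemma. The main (and essentially only) subtlety is handling the nonlinear pieces $\int F(R)\phi_j\,dx$ when expanding $S_j(t,R)$: one must check that the difference $F(R) - F(R_j)$ on the support of $\phi_j$ is dominated by a polynomial in $|R_j|$ and $|R_k|$ ($k\neq j$) whose cross terms all inherit the exponential smallness from Claim \ref{cl:exponentialestimatesonRj}. Since $F$ is smooth away from $0$ and has polynomial growth by (A2), this is routine but worth writing carefully; everything else is a direct consequence of Lemma \ref{lem:loc-S}.
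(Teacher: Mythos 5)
Your proposal is correct and follows exactly the paper's route: sum Lemma~\ref{lem:loc-S} over $j$ and reduce to showing $\mathcal{S}(t,R)=\sum_{j}S_j(t,R_j)+O(e^{-3\alpha\w_\star^{1/2}v_\star t})$, which the paper also deduces from Claim~\ref{cl:exponentialestimatesonRj}. Your treatment of the cross terms and of $F(R)-F(R_j)$ simply fills in details the paper leaves implicit.
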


\begin{proof}
In view of Lemma~\ref{lem:loc-S} all we need to prove is 
\[
\mathcal{S}(t,R)=\sum_{j=1}^NS_j(t,R_j)+O(e^{-3\alpha\w_\star^{\frac{1}{2}}  v_\star  t}),
\]
which follows immediately from Claim~\ref{cl:exponentialestimatesonRj}.
\end{proof}

\begin{lem}\label{lem:estimate-dS}
The following estimate holds.
\[
\left| \frac{\partial \mathcal{S}(t,u(t))}{\partial t}\right|\leq \frac{C}{\sqrt{t}}e^{-2\alpha\w_\star^{\frac{1}{2}}  v_\star t}.
\]
\end{lem}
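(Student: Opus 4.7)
My plan is to differentiate $\mathcal{S}(t,u(t))$ in time, use that $u$ solves \eqref{eq:nls} together with integration by parts to transfer every time derivative of $u$ onto spatial derivatives, and exploit the fact that the unweighted versions of $E_j$, $M_j$, $P_j$ are conserved by the flow of \eqref{eq:nls}. Consequently every surviving integrand in $\frac{d}{dt}\mathcal{S}(t,u(t))$ must carry a factor $\partial_t\phi_j$ or $\nabla\phi_j$.

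Concretely, $iu_t = -\D u - f(u)$ yields the local mass identity
\[
\frac{d}{dt}M_j(t,u) = \intrd |u|^2\,\partial_t\phi_j\,dx + 2\intrd \Im(\bar u\nabla u)\cdot\nabla\phi_j\,dx,
\]
and analogous but longer formulas for $\frac{d}{dt}E_j(t,u)$ and $\frac{d}{dt}P_j(t,u)$ whose flux terms involve $|\nabla u|^2$, $F(u)$, $f(u)$ and $\bar u\nabla u$. Summing these three contributions gives $\frac{d}{dt}\mathcal{S}(t,u(t))$ as a finite sum of integrals of the form $\intrd Q(u,\nabla u)\,\chi\,dx$ with $\chi\in\{\partial_t\phi_j,\,\partial_{x_i}\phi_j\}$ and $Q$ a polynomial expression in $u,\bar u,\nabla u,f(u)$ (locally controlled in $\hu$).

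Next I would quantify the size of $\chi$. From $\psi_j(t,x)=\psi((x^1-m_jt)/\sqrt t)$ a direct computation yields $|\partial_t\phi_j|+|\nabla\phi_j|\leq C/\sqrt t$, supported in the thin strip $\Sigma_j(t):=\{|x^1-m_jt|\leq\sqrt t\}\cup\{|x^1-m_{j+1}t|\leq\sqrt t\}$. By construction $m_j=\tfrac12(v_{j-1}^1+v_j^1)$, and Claim~\ref{cl:choicedirection} combined with the definition of $v_\star$ imply $|m_j-v_k^1|\geq \tfrac{9\alpha v_\star}{2}$ for every $j,k$. Hence for $t$ large every point of $\Sigma_j(t)$ sits at distance at least $4\alpha v_\star t$ from every soliton center $v_kt+x_k$, and the exponential decay of the $\Phi_k$ (Berestycki--Lions / paper's $e^{\sqrt{\w}|x|}$ bound) gives on this support the analogue of Claim~\ref{cl:exponentialestimatesonRj}:
\[
|R_k(t,x)|+|\nabla R_k(t,x)|\leq Ce^{-2\alpha\w_\star^{1/2}v_\star t}\,e^{-\frac{\w_\star^{1/2}}{2}|x-v_kt-x_k|}.
\]

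To conclude, I would write $u=R+v$ and expand each integrand polynomially in $R$ and $v$ (the $F(u)$-terms being treated by Taylor expansion with remainders controlled through (A2) and the Sobolev embedding). Pure $R$-terms are bounded by $Ct^{-1/2}e^{-4\alpha\w_\star^{1/2}v_\star t}$; Cauchy--Schwarz on cross-terms gives $Ct^{-1/2}e^{-2\alpha\w_\star^{1/2}v_\star t}\,\nhu{v}\leq Ct^{-1/2}e^{-3\alpha\w_\star^{1/2}v_\star t}$; and pure $v$-terms are bounded by $Ct^{-1/2}\nhud{v}\leq Ct^{-1/2}e^{-2\alpha\w_\star^{1/2}v_\star t}$, the dominant contribution, which delivers the claimed bound. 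The main obstacle is precisely the bookkeeping step for the energy and momentum: one must verify carefully that \emph{every} term produced by integration by parts in those more delicate identities genuinely carries a factor $\partial_t\phi_j$ or $\nabla\phi_j$, since any leftover term without such a weight would not enjoy the $t^{-1/2}$ gain and would spoil the estimate.
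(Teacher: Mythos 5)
Your proposal is correct and follows essentially the same route as the paper: differentiate the localized quantities, observe that every flux term carries a factor $t^{-1/2}$ and is supported in the strips $|x^1-m_jt|\le\sqrt t$, which stay at distance $\gtrsim \alpha v_\star t$ from all soliton centers, and bound the integrand there by writing $u=R+v$, the bootstrap bound on $v$ giving the dominant $e^{-2\alpha\w_\star^{1/2}v_\star t}$ contribution. The one simplification the paper makes --- which disposes of exactly the bookkeeping you flag as the main obstacle --- is to note that $\sum_j\phi_j\equiv 1$, so $\sum_j E_j(t,u)=E(u)$ is exactly conserved and only the localized masses and momenta need to be differentiated; for those the flux identities are standard (the momentum one requiring an extra integration by parts that produces a harmless $t^{-1}|u|^2\psi'''$ term rather than any second derivative of $u$).
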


\begin{proof}
We remark that 
\[
\mathcal{S}(t,w)=E(w)+\sum_{j=1}^N\left(\frac{1}{2}\left(\w_j+\frac{|v_j|^2}{4}\right)M_j(t,w)-\frac{1}{2}v_j\cdot P_j(t,w)\right).
\]
Since the energy $E$ is conserved by the flow of \eqref{eq:nls}, to estimate the variations of $\mathcal{S}(t,u(t))$ we only have to study the variations of the localized masses $M_j(t,u(t))$ and momentums $P_j(t,u(t))$.
Take any $j=2,...,N$. We have
\begin{align}\label{eq:massderivation}
\begin{split}
\MoveEqLeft \frac{1}{2}\frac{\partial}{\partial t}\intrd|u(t)|^2\psi_j(t,x)dx\\&=\frac{1}{\sqrt{t}}\intrd \left(\Im(\bar{u}\partial_{1}u)
- |u|^2\frac{x^1+m_jt}{4t}\right)\psi'(\frac{1}{\sqrt{t}}(x^1-m_jt))dx.
\end{split}
\end{align}
Define $I_j:=[m_jt-\sqrt{t},m_jt+\sqrt{t}]\times \R^{d-1}$. From \eqref{eq:massderivation} and the support properties of $\psi$ we obtain
\[
\left|\frac{\partial}{\partial t}\intrd|u(t)|^2\psi_j(t,x)dx\right|\leq \frac{C}{\sqrt{t}}\int_{I_j}|\nabla u|^2+|u|^2dx.
\]
Similarly, for the first component of $P_j$ we have
\begin{multline}\label{eq:momentum1derivation}
\frac{1}{2}\frac{\partial}{\partial t}\intrd \bar{u}\partial_{1}u \psi_jdx=\\
\frac{1}{\sqrt{t}}\intrd \left(
|\partial_{1}u|^2-g(|u|^2)|u|^2+F(u)-\bar{u}\partial_{1}u\frac{x^1+m_jt}{2t}\right)\psi'(\frac{1}{\sqrt{t}}(x^1-m_jt))\\
-\frac{1}{4t}|u|^2\psi'''(\frac{1}{\sqrt{t}}(x^1-m_jt)dx.
\end{multline}
Combining \eqref{eq:momentum1derivation} with the support properties of $\psi$ and (A1)-(A2) we obtain
\[
\left|\frac{\partial}{\partial t}\intrd \bar{u}\partial_{1}u \psi_jdx\right|\leq \frac{C}{\sqrt{t}}\left(\int_{I_j}|\nabla u|^2+|u|^2dx+\left(\int_{I_j}|\nabla u|^2+|u|^2dx\right)^{\frac{p+1}{2}}\right).
\]
Similar arguments lead for $k\geq 2$ to
\[
\left|\frac{\partial}{\partial t}\intrd \bar{u}\partial_{k}u \psi_jdx\right|\leq \frac{C}{\sqrt{t}}\int_{I_j}|\nabla u|^2+|u|^2dx.
\]
Now, we remark that 
\[
\int_{I_j}\left(|\nabla u|^2+|u|^2\right)dx\leq \int_{I_j}|\nabla R|^2+|R|^2dx+\nhud{u-R}.
\]
Recall that by hypothesis we have 
\[
\nhu{u-R}=\nhu{v}\leq e^{-\alpha\w_\star^{\frac{1}{2}} v_\star t}.
\]
In addition, the decay properties of each $\Phi_k$ and the definition of $I_j$ imply 
\[
\int_{I_j}\left(|\nabla u|^2+|u|^2\right)dx\leq Ce^{-2\alpha\w_\star^{\frac{1}{2}} v_\star t}.
\]
Consequently, 
\[
\left|\frac{\partial}{\partial t}\intrd|u(t)|^2\psi_j(t,x)dx\right|+\left|\frac{\partial}{\partial t}\intrd \bar{u}\nabla u \psi_jdx\right|\leq \frac{C}{\sqrt{t}}e^{-2 \alpha\w_\star^{\frac{1}{2}} v_\star t}.
\]
Note that the previous inequality is trivial for $j=1$ since $\psi_1=1$ and the mass and momentum are conserved.
Plugging the previous into the expressions of $M_j$ and $P_j$ gives
\[
\left|\frac{\partial}{\partial t}\left(M_j(t,u)+P_j(t,u)\right)\right|\leq \frac{C}{\sqrt{t}}e^{-2\alpha \w_\star^{\frac{1}{2}} v_\star t}
\]
and the desired conclusion readily follows.
\end{proof}

\begin{proof}[Proof of Proposition~\ref{prop:estimate2}]
Let $K=K(\Phi_1,...,\Phi_N)$ and $m:=\sum_{j=1}^N\nu_j$ be given by Lemma~\ref{lem:coercivity-global}. Since $\nld{ X_j^k(t)}=1$ for any $t,j,k$, by Lemma~\ref{lem:step1}, there exists $v_\sharp=v_\sharp(\Phi_1,...,\Phi_N)$ such that if $v_\star>\alpha^{-1}v_\sharp$ we have for $j=1,...,N$, $k=1,...,\nu_j$ that
\begin{equation}\label{eq:prop:estimate2-1}
\psld{v(t)}{ X^k_j(t)}^2\leq \nldd{v(t)}\leq \frac{1}{2mK}e^{-2\alpha\w_\star^{\frac{1}{2}}  v_\star t}.
\end{equation}
Using Lemma~\ref{lem:estimate-dS} we obtain
\begin{equation}\label{eq:prop:estimate2-2}
\mathcal{S}(t,u(t))-\mathcal{S}(T_n,u(T_n))\leq \int^{T_n}_t\left| \frac{\partial \mathcal{S}(s,u(s))}{\partial s}\right|ds\leq \frac{C}{\sqrt{t}}e^{-2\alpha\w_\star^{\frac{1}{2}}  v_\star t}.
\end{equation}
Note that since $u_n(T_n)=R(T_n)$ we have
\begin{equation}\label{eq:S(u(T_n)=S(R)}
\mathcal{S}(T_n,u(T_n))-\mathcal{S}(T_n,R(T_n))=0
\end{equation}
By Lemma~\ref{lem:S-eexpansion}, \eqref{eq:prop:estimate2-2}-\eqref{eq:S(u(T_n)=S(R)} imply
\begin{equation}\label{eq:prop:estimate2-3}
\mathcal{H}(t,v)\leq \frac{Ce^{-2\alpha\w_\star^{\frac{1}{2}}  v_\star t}}{\sqrt{t}}+o(\nhud{v}).
\end{equation}
Combining \eqref{eq:prop:estimate2-1}-\eqref{eq:prop:estimate2-3} and Lemma~\ref{lem:coercivity-global} we get
\[
\nhud{v}\leq \left(\frac{C}{\sqrt{t}}+\frac{1}{2}\right)e^{-2\alpha\w_\star^{\frac{1}{2}}  v_\star t}+o(\nhud{v})
\]
and we easily obtain the desired conclusion if $T_0$ is chosen large enough.
\end{proof}

\section{Non-uniqueness and instability}

In this section, we assume $g\in \mathcal C^\infty$ and (A1)-(A4) are satisfied. We take $N\in\N\setminus\{0,1\}$, and for $j=1,...,N$, $\w_j>0$,
$\g_j\in\R$, $v_j\in\Rd$, $x_j\in\Rd$ and
$\Phi_j\in\hu$ a solution of \eqref{eq:snls} (with $\w_0$ replaced by $\w_j$). Recall that 
\begin{gather*}
R_j(t,x)=\Phi_j(x-v_jt-x_j)e^{i(\frac{1}{2}v_j\cdot
  x-\frac{1}{4}|v_j|^2t+\w_jt+\g_j)},\\
\w_\star=\frac{1}{2}\min\left\{ \w_j,j=1,...,N \right\},\quad
v_\star=\frac{1}{9}\min\left\{|v_{j}-v_k|;j,k=1,...,N,j\neq k
\right\}.
\end{gather*}

\subsection{Construction of approximation profiles}\label{sec:profile}

Since \eqref{eq:nls} is Galilean invariant, we can assume without loss of generality that $v_1=0,\gamma_1=0,x_1=0$.
For notational brevity we drop in this subsection the subscript $1$ indicating that we work we the first excited state. Hence we will write (in this subsection only) $R_1(t,x)=R(t,x)$, $\Phi_1=\Phi$, etc.

Note first $df(z).w = g(|z|^2)w + 2 \Re(z \bar w) g'(|z|^2) z$ is not $\m C$-linear. 
This is why we shall identify $\m C$ with $\m R^2$ and use the notation $a+ib = \begin{pmatrix} a \\ b \end{pmatrix}$ ($a,b \in \m R$), so as to consider operators with real entries.  Given a vector $v \in \m C^2$, we denote $v^+$ and $v^-$ its components (so that if $v$ represents a complex number, $v^+$ is the real part and $v^-$ the imaginary part). To avoid confusion, we will denote with an index whether we consider the operator with $\m C$, $\m R^2$, or $\m \C^2$-valued functions. 

Thus, as we consider
\[ 
\q L_{\m C} v = -i\Delta v - idf(R).v, \quad  L_{\m C} v = -i\Delta v + i\w v - idf(\Phi).v,
\]
and the non-linear operators
\begin{align*} 
 \q N_{\m C} (v) &= i f(R+v) - if(R) - idf(R).v, \\ 
\q M_{\m C} (v)& = e^{-i\w t} \q N(e^{i\w t} v) = if(\Phi+v) - if(\Phi) - idf(\Phi).v,
\end{align*}
then for instance
\[ 
L_{\m R^2} 
\begin{pmatrix}
 v^+ \\ v^- 
\end{pmatrix} 
= 
\begin{pmatrix}
J &  \Delta - \omega +I^- \\
-\Delta + \omega - I^+ & -J
\end{pmatrix}
\begin{pmatrix}
v^+ \\
v^-
\end{pmatrix}.
\]
with $\Phi^+$ and $\Phi^-$ the real and imaginary parts of $\Phi$ and
\[ 
J = 2\Phi^+ \Phi^- g'(|\Phi|^2), \quad  I^\pm = g(|\Phi|^2) + 2{\Phi^{\pm}}^2 g'(|\Phi|^2). 
\]
Now $L_{\m R^2}$ is as an (unbounded) $\m R$-linear operator on $H^2(\m R^d, \m R^2) \to L^2(\m R^d, \m R^2)$. So as to have some eigenfunctions, we can complexify, and we are interested in $L_{\m C^2} : H^2(\m R^d, \m C^2) \to L^2(\m R^d, \m C^2)$, which is a $\m C$-linear operator with real entries.

Let $\alpha>0$ be the decay rate given by Proposition~\ref{thm:decayofeigenfunction} for eigenfunctions of $L$ with eigenvalue $\lambda$ (see (A4)). Possibly taking a smaller value of $\alpha$, we can assume  $\alpha\in (0,\sqrt \w)$. For $\m K = \m R, \m R^2, \m C$ or $\m C^2$, denote
\begin{equation}\label{eq:defspaceqH}
\q H(\m K) = \{ v \in H^\infty(\m R^d, \m K) | \ e^{\alpha |x|} |D^a v| \in L^\infty(\Rd)\text{ for any multi-index }a \}.
\end{equation}
We have gathered in the following proposition some properties of $L_{\m C^2}$ that shall be needed for our analysis. 
\begin{prop}[Properties of $L_{\m C^2}$] \mbox{}
\begin{itemize}
\item[(i)] The eigenvalue $\lambda = \rho + i \theta \in \m C$ of $L_{\m C^2}$ can be chosen with maximal real part. We denote $Z(x) = \begin{pmatrix}Z^+(x) \\ Z^-(x) \end{pmatrix} \in H^2(\m R^d, \m C^2)$ an associated eigenfunction.
\item[(ii)] $\Phi \in \q H(\m R^2)$ and $Z \in \q H(\m C^2)$.
\item[(iii)] Let $\mu \notin \Sp(L_{\m R^2})$, and $A \in \q H(\m C^2)$. Then there exists a solution $X \in \q H(\m C^2)$ to
$(L - \mu I)X =A$, and $(L-\mu I)^{-1}$ is a continuous operator on $\q H(\m C^2)$. 
\end{itemize}
\end{prop}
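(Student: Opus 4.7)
For (i), I would decompose $L_{\m C^2} = L_0 + K$ with $L_0 = -i\Delta + i\w$ skew-adjoint on $L^2(\Rd,\m C^2)$ (so $\Sp(L_0) \subset i\R$) and $K = -idf(\Phi)$ a bounded, relatively compact matrix multiplication operator. Relative compactness follows from the pointwise decay of the entries of $df(\Phi)$ at infinity (a consequence of $\Phi(x) \to 0$ together with $g(0) = 0$ and $sg'(s) \to 0$ in (A1)). Weyl's theorem then gives $\Sp_{\mathrm{ess}}(L_{\m C^2}) = \Sp(L_0) \subset i\R$, while the identity $\Re\lambda = \Re(Ku,u)$ for any $L^2$-eigenpair with $\|u\|=1$ (since $(L_0u,u) \in i\R$) forces $|\Re\lambda| \le \|K\|_\infty$. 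Eigenvalues with positive real part thus lie in the discrete spectrum and cannot accumulate off $i\R$; combined with the uniform bound on $\Re\lambda$, the set $\{\lambda \in \Sp(L_{\m C^2}) : \Re\lambda \ge \rho\}$ is finite for every $\rho > 0$, so one attaining the maximum of $\Re\lambda$ exists.

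For (ii), $\Phi \in H^\infty$ by standard bootstrap on $-\Delta\Phi + \w\Phi = f(\Phi)$ since $g \in \mathcal C^\infty$; the exponential decay at rate $\alpha < \sqrt{\w}$ of $\Phi$ and $\nabla\Phi$ is exactly the statement recalled after (A3). Iteratively differentiating the equation, each $D^a\Phi$ solves an elliptic equation of the form $(-\Delta + \w)D^a \Phi = P_a(\Phi, \nabla\Phi, \dots)$ with a right-hand side controlled by already-handled derivatives, so local elliptic regularity propagates the exponential decay to every derivative, giving $\Phi \in \q H(\m R^2)$. For $Z$, the pointwise bound $|Z(x)| \le Ce^{-\alpha|x|}$ is exactly Proposition~\ref{thm:decayofeigenfunction}; smoothness $Z \in H^\infty$ follows by bootstrap on $L_{\m C^2} Z = \lambda Z$, and the same iterated differentiation argument upgrades decay to every derivative.

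For (iii), since the real operator $L_{\m R^2}$ has spectrum closed under conjugation, $\mu \notin \Sp(L_{\m R^2})$ implies $\mu \notin \Sp(L_{\m C^2})$, so $L - \mu I : H^2(\Rd, \m C^2) \to L^2(\Rd, \m C^2)$ is a bounded isomorphism. Given $A \in \q H(\m C^2) \subset L^2$, set $X := (L - \mu I)^{-1}A$; iterative elliptic bootstrap on $(L-\mu)X = A$ yields $X \in H^\infty$ with norms linearly controlled by those of $A$. For exponential decay, I would run an Agmon-type weighted energy estimate: with the regularized weight $\chi_\e(x) := e^{\alpha|x|}/(1 + \e e^{\alpha|x|})$ (bounded for $\e > 0$, so legitimate as a test multiplier), pairing $(L - \mu)X = A$ with $\chi_\e^2 \bar X$ and integrating by parts, the principal part yields a coercive lower bound of order $(\w - \alpha^2)\|\chi_\e X\|_{L^2}^2 + \|\chi_\e \nabla X\|_{L^2}^2$ (using $\alpha^2 < \w$), while the contributions from the exponentially decaying $df(\Phi)$ and the lower-order $\mu X$ can be absorbed. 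The right-hand side is bounded by $\|\chi_\e A\|_{L^2}\|\chi_\e X\|_{L^2}$; letting $\e \to 0$ gives $e^{\alpha|x|} X \in L^2$, and a final pointwise/Sobolev bootstrap on $(L-\mu)X = A$ with derivatives, using $e^{\alpha|x|}D^a A \in L^\infty$, promotes this to $X \in \q H(\m C^2)$ with continuous dependence on $A$.

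The main technical obstacle is establishing the Agmon estimate in (iii) uniformly in $\mu$: one must check that the coercive lower bound depends only on $\alpha^2 < \w$ and the decay of $df(\Phi)$ (both independent of $\mu$), while the $\mu$-dependence enters only through the $L^2 \to L^2$ norm of $(L - \mu I)^{-1}$, which is uniform on compact subsets of the resolvent set. The exponential decay of the entries of $df(\Phi)$ (part (ii) applied to $\Phi$) is what makes this absorption argument go through.
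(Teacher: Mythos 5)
Your parts (i) and (ii) follow essentially the same route as the paper: (i) is the compact-perturbation/Weyl argument (sharing the paper's own small elision, namely that the bound $|\Re\lambda|\le\|K\|$ controls only the real part, so one must still rule out a sequence of eigenvalues escaping to $\pm i\infty$ inside the strip before concluding the maximum is attained), and (ii) correctly defers the decay of $Z$ to Proposition~\ref{thm:decayofeigenfunction} and bootstraps regularity, as the paper does.

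The genuine gap is in (iii). Your Agmon-type estimate assumes that pairing $(L-\mu I)X=A$ against $\chi_\e^2\bar X$ and taking real parts produces a coercive term of order $(\w-\alpha^2)\|\chi_\e X\|_{L^2}^2+\|\chi_\e\nabla X\|_{L^2}^2$. It does not: the principal part of $L$ is the off-diagonal, skew-adjoint matrix $\begin{pmatrix}0&\Delta-\w\\-\Delta+\w&0\end{pmatrix}$, so in $\Re\langle L_0X,\chi_\e^2X\rangle$ both the $\w$-terms and the gradient terms are purely imaginary and cancel; no sign-definite quadratic form survives, only an error of size $\alpha\|\chi_\e X\|\,\|\chi_\e\nabla X\|$ from $\nabla\chi_\e$. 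If you first conjugate by $P=\begin{pmatrix}1&i\\1&-i\end{pmatrix}$ to diagonalize the principal part (as the paper does in the appendix), you obtain two scalar Helmholtz operators with zeroth-order constants $\w\mp i\mu$, whose real parts are $\w\pm\Im\mu$; one of these is \emph{negative} as soon as $|\Im\mu|>\w$. That is exactly the regime needed in Proposition~\ref{prop:profile}, where $L-(k_0\rho+ij\theta)\Id$ must be inverted with $j\theta$ arbitrarily large, so your closing claim that the coercive lower bound ``depends only on $\alpha^2<\w$'' and not on $\mu$ is precisely what fails. This obstruction is the reason the paper avoids weighted energy estimates altogether and instead proves the pointwise comparison $|g^d_\mu|\le Cg^d_{-\tau}$ for fundamental solutions of $-\Delta-\mu$, valid for \emph{every} $\mu\notin\R^+$ (Lemma~\ref{lem:estimateonfundamentalsol}), then represents $X$ as a convolution and closes with a maximum-principle comparison. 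To salvage your route you would need a Combes--Thomas-type argument that exploits the imaginary parts as well as the real parts, or you should simply adopt the fundamental-solution representation.
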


Exponential decay of eigenvalues of $L$ is a fact of independent interest. Hence we have stated the result under general assumptions in the  \ref{ap:decay} (see Proposition~\ref{thm:decayofeigenfunction}). Notice that we treat all possible eigenvalues (in particular without assuming $|\Im\lambda|<\w$, as it is the case for example in \cite{HuLe07}).

\begin{proof}
(i) It is well known that the spectrum of $L_{\m C^2}$ is composed of essential spectrum on $\{iy,y\in\R,|y|\geq\w\}$ and eigenvalues symmetric with respect to the real and imaginary axes (see e.g. \cite{Gr88,HuLe07}). The set of eigenvalues with positive real part is non-empty due to (A4). As $L_{\m C^2}$ is a compact perturbation of $\begin{pmatrix}0&\Delta-\w\\-\Delta+\w&0\end{pmatrix}$ there exists an eigenvalue $\lambda$ with maximal real part.

(ii) Exponential decay of $\Phi$, $\nabla \Phi$ is a well-known fact (see e.g. \cite{Ca03}). Then using the equation satisfied by $\Phi$, one deduces that $\Phi \in \q H(\m R^2)$. The decay and regularity of the eigenfunction $Z$ rely essentially on the decay and regularity of $\Phi$. Therefore, we leave the proof to  \ref{ap:decay}, Proposition~\ref{thm:decayofeigenfunction} and Proposition~\ref{prop:decayreg}.

(iii) Regularity of $X$ follows from a simple bootstrap argument. For the exponential decay, we use the properties of fundamental solutions of Helmoltz equations (see Proposition~\ref{prop:decayreg}).
\end{proof}

To conclude with the notations, we define the decay class $O(\chi(t))$, which we will use for functions decaying exponentially in time.

\begin{defn}
Let $\xi \in \mathcal C^\infty(\m R^+, H^\infty(\Rd))$ and $\chi:\m R^+ \to (0,+\infty)$. Then we denote
\[ \xi(t) = O(\chi(t))  \quad \text{as } t \to +\infty, \]
if, for all $s \ge 0$, there exists $C(s) >0$ such that
\[ \forall t \ge 0, \quad \| \xi(t) \|_{H^s(\Rd)} \le C(s) \chi(t). \]
\end{defn}

Let $Y_1 := \Re(Z) = \begin{pmatrix} \Re(Z^+) \\ \Re(Z^-) \end{pmatrix}$ and $Y_2 := \Im (Z) = \begin{pmatrix} \Im(Z^+) \\ \Im(Z^-) \end{pmatrix}$. Then $Y_1, Y_2 \in \q H(\m R^2)$, and 
\[
\left\{ \begin{array}{rcl}
L_{\m R^2} Y_1 & = & \rho Y_1  -  \theta Y_2, \\
L_{\m R^2} Y_2 & = & \theta Y_1  +  \rho Y_2.
\end{array} \right.
\]
Denote
\begin{equation}\label{def:Y}
Y(t) = e^{-\rho t} (\cos (\theta t) Y_1 + \sin (\theta t) Y_2).
\end{equation}
\begin{lem}
The function $Y$ verifies for all $t\in\m R$ the following equation.
\begin{equation} \label{eq:Ylinflow}
\partial_t Y + L_{\m R^2} Y =0.
\end{equation}
\end{lem}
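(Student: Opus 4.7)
The proof is a direct verification that should proceed in two clean steps.

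First, I would translate the eigenvalue equation $L_{\m C^2} Z = \lambda Z$ into a coupled system for the real and imaginary parts $Y_1 = \Re(Z)$ and $Y_2 = \Im(Z)$. Since $L_{\m C^2}$ is the $\m C$-linear extension of the real operator $L_{\m R^2}$ (the entries of $L$ are all real because $\Phi$ is real; or, more precisely, $L_{\m C^2}$ acts component-wise on real and imaginary parts of a $\m C^2$-valued function), we have $L_{\m C^2}(Y_1 + i Y_2) = L_{\m R^2} Y_1 + i L_{\m R^2} Y_2$. Equating with $(\rho + i\theta)(Y_1 + i Y_2) = (\rho Y_1 - \theta Y_2) + i (\theta Y_1 + \rho Y_2)$ and matching real and imaginary parts yields the system that is already displayed right before the statement of the lemma, namely
\[
L_{\m R^2} Y_1 = \rho Y_1 - \theta Y_2, \qquad L_{\m R^2} Y_2 = \theta Y_1 + \rho Y_2.
\]

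Second, I would just differentiate $Y(t) = e^{-\rho t}\bigl( \cos(\theta t) Y_1 + \sin(\theta t) Y_2 \bigr)$ in $t$ and plug in. Carrying out the time derivative gives
\[
\partial_t Y = e^{-\rho t}\bigl[ (-\rho \cos(\theta t) - \theta \sin(\theta t)) Y_1 + (-\rho \sin(\theta t) + \theta \cos(\theta t)) Y_2 \bigr],
\]
while the identities from the first step produce
\[
L_{\m R^2} Y = e^{-\rho t}\bigl[ (\rho \cos(\theta t) + \theta \sin(\theta t)) Y_1 + (\rho \sin(\theta t) - \theta \cos(\theta t)) Y_2 \bigr].
\]
Adding the two, the coefficients of $Y_1$ and of $Y_2$ cancel identically, which gives $\partial_t Y + L_{\m R^2} Y = 0$.

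There is no genuine obstacle here: the only subtle point is keeping straight that $L_{\m C^2}$ is $\m C$-linear and has real matrix entries, so that the decomposition $Z = Y_1 + iY_2$ really does give the claimed system, and that $Y_1, Y_2$ each lie in $\q H(\m R^2)$ (hence in $H^\infty$), so all derivatives are taken pointwise without any regularity issue. One could equivalently observe that $Y(t) = \Re\bigl(e^{-\lambda t} Z\bigr)$ and use that $\partial_t(e^{-\lambda t} Z) + L_{\m C^2}(e^{-\lambda t} Z) = (-\lambda + \lambda) e^{-\lambda t} Z = 0$, then take the real part; this is just a repackaging of the same computation.
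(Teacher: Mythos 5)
Your proof is correct and follows essentially the same computation as the paper: differentiate $Y(t)$ in $t$, apply $L_{\m R^2}$ using the system $L_{\m R^2}Y_1 = \rho Y_1 - \theta Y_2$, $L_{\m R^2}Y_2 = \theta Y_1 + \rho Y_2$ stated just before the lemma, and observe the cancellation. One small correction: the entries of $L_{\m R^2}$ are real not because $\Phi$ is real (it may be a genuinely complex vortex), but because the operator is already written in the real/imaginary-part decomposition with real-valued entries $J$ and $I^\pm$ --- your parenthetical alternative (that $L_{\m C^2}$ is the complexification of a real-entried operator and hence commutes with $\Re$ and $\Im$) is the right justification.
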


\begin{proof}
Indeed, we compute
\begin{align*}
\MoveEqLeft 
\partial_t (e^{-\rho t} (\cos (\theta t) Y_1 + \sin (\theta t) Y_2)), \\ 
& = e^{-\rho t} \left( (-\rho \cos(\theta t) - \theta \sin (\theta t)) Y_1 +(-\rho \sin(\theta t) + \theta \cos(\theta t)) Y_2 \right), \\
\MoveEqLeft 
L_{\m R^2} (e^{-\rho t} (\cos (\theta t) Y_1 + \sin (\theta t) Y_2)) ,\\
& = e^{-\rho t} (\cos (\theta t) L Y_1 + \sin (\theta t) L Y_2)) ,\\
& = e^{-\rho t} (\cos (\theta t) (\rho Y_1 - \theta Y_2) + \sin(\theta t) (\theta Y_1 + \rho Y_2) ,\\
& =  e^{-\rho t} ((\rho \cos (\theta t) + \theta  \sin(\theta t)) Y_1 + (\rho \sin (\theta t) - \theta \cos(\theta t)) Y_2.
\end{align*}
So that $(\partial_t + L_{\m R^2})(Y(t)) =0$.
\end{proof}

\begin{prop} \label{prop:profile}
Let $N_0 \in \m N$ and $a \in \m R$. Then there exists a profile $W^{N_0} \in \mathcal C^\infty([0,{+\infty}), \q H(\m R^2))$, such that as $t \to +\infty$,
\[
\partial_t W^{N_0} + L_{\m R^2} W^{N_0} = \q M_{\m R^2} (W^{N_0}) + O(e^{-\rho (N_0+1) t}),
\]
and $W^{N_0}(t) = a Y(t) + O(e^{-2 \rho t})$.
\end{prop}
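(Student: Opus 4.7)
I would construct $W^{N_0}$ as a finite truncated expansion
\[
W^{N_0}(t,x)=\sum_{k=1}^{N_0} W_k(t,x), \qquad W_k(t,x)=e^{-k\rho t}\sum_{j=0}^{k}\bigl(A_{k,j}(x)\cos(j\theta t)+B_{k,j}(x)\sin(j\theta t)\bigr),
\]
with $A_{k,j},B_{k,j}\in\mathcal{H}(\mathbb R^2)$ determined inductively so that the residual
\[
\mathcal E_n:=\partial_t\Bigl(\sum_{k=1}^n W_k\Bigr)+L_{\m R^2}\Bigl(\sum_{k=1}^n W_k\Bigr)-\q M_{\m R^2}\Bigl(\sum_{k=1}^n W_k\Bigr)
\]
is $O(e^{-(n+1)\rho t})$. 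The base case is $W_1:=aY(t)$: by \eqref{eq:Ylinflow} it exactly satisfies the linearized equation, and since $g\in\mathcal C^\infty$ a Taylor expansion of $f$ around $\Phi$ gives $\q M_{\m R^2}(W_1)=O(e^{-2\rho t})$, so $\mathcal E_1=O(e^{-2\rho t})$.

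For the inductive step, given $V_n=\sum_{k=1}^n W_k$ I would Taylor-expand $\q M_{\m R^2}(V_n+W_{n+1})$ in powers of $W_{n+1}$ and collect inside $\mathcal E_n$ the contributions appearing at order $e^{-(n+1)\rho t}$. Since $V_n$ is a finite sum of exponential/trigonometric monomials in $\theta t$ with coefficients in $\mathcal H(\m R^2)$, these contributions assemble into a source of the form
\[
S_{n+1}(t,x)=e^{-(n+1)\rho t}\sum_{j=0}^{n+1}\bigl(\sigma_{n+1,j}^c(x)\cos(j\theta t)+\sigma_{n+1,j}^s(x)\sin(j\theta t)\bigr),
\]
with known coefficients $\sigma_{n+1,j}^{c,s}\in\mathcal H(\m R^2)$ (note that $\mathcal H$ is stable under pointwise products of smooth exponentially decaying functions, so the coefficients inherit the required regularity and decay from their parents). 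Imposing $\partial_t W_{n+1}+L_{\m R^2}W_{n+1}=S_{n+1}$ and complexifying via $\sigma_j^\pm:=A_{n+1,j}\mp iB_{n+1,j}$ reduces the problem to the algebraic system
\[
\bigl(L_{\m C^2}-((n+1)\rho\pm ij\theta)I\bigr)\sigma_j^{\pm}=\tilde S_{n+1,j}^{\pm},\qquad 0\le j\le n+1,
\]
with right-hand sides in $\mathcal H(\m C^2)$.

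The crucial observation is that for every $n+1\ge 2$ the complex number $(n+1)\rho\pm ij\theta$ lies outside $\Sp(L_{\m R^2})$: its real part $(n+1)\rho>\rho$ exceeds the maximal real part among eigenvalues by the maximality clause in~(i) of the properties of $L_{\m C^2}$ (which encodes (A4)), and the essential spectrum is confined to the imaginary axis. Part~(iii) of the same properties then produces solutions $\sigma_j^\pm\in\mathcal H(\m C^2)$; combining conjugate pairs yields real-valued $A_{n+1,j},B_{n+1,j}\in\mathcal H(\m R^2)$. By construction the updated residual $\mathcal E_{n+1}$ is $O(e^{-(n+2)\rho t})$, completing the induction. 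After $N_0$ steps, $W^{N_0}:=V_{N_0}$ has the required form, satisfies the approximate equation at order $e^{-(N_0+1)\rho t}$, and obeys $W^{N_0}=aY+O(e^{-2\rho t})$ since $W_1=aY$ and $\sum_{k\ge 2}W_k=O(e^{-2\rho t})$.

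The main obstacle I expect is the spectral step: one must check at every order that no resonance occurs between the rates $(n+1)\rho\pm ij\theta$ and the spectrum of $L$, and this is precisely where the maximality of $\Re\lambda$ in (A4) is used. The remaining work is bookkeeping --- tracking the (linear) growth of the trigonometric degree of the $P_k$ through the multi-linear terms of the Taylor expansion of $\q M_{\m R^2}$, and verifying that each algebraic solve keeps us inside $\mathcal H$ --- which is routine given the closure properties of $\mathcal H$ and the inverse estimate in (iii).
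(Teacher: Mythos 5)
Your proposal is correct and follows essentially the same route as the paper: the same ansatz $\sum_{k=1}^{N_0}e^{-k\rho t}\sum_{j\le k}(A\cos(j\theta t)+B\sin(j\theta t))$, the same triangular induction on the exponential order starting from $A_{1,1}=aY_1$, $B_{1,1}=aY_2$, and the same key non-resonance observation that $k\rho+ij\theta\notin\Sp(L_{\m C^2})$ for $k\ge 2$ by maximality of $\Re\lambda$, resolved by complexifying and inverting $L_{\m C^2}-(k\rho+ij\theta)\Id$ in $\q H(\m C^2)$.
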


\begin{rmk}
Notice that $W^{N_0}(t,x)$ is a real valued vector. If we go back and consider $W^{N_0}$ as a function taking values in $\m C$, we then have, by definition of $\q M$, with $U^{N_0}(t) = R(t) + e^{i\omega t} W^{N_0}(t)$,
\[ i \partial_t U^{N_0} + \Delta U^{N_0} + f(U^{N_0}) = O(e^{-\rho (N_0+1) t}). \]
\end{rmk}

For the proof of Proposition~\ref{prop:profile}, we write $W$ for $W^{N_0}$ (for simplicity in notation) and we look for $W$ in the following form
\begin{equation}\label{eq:deffirstprofile} 
W(t,x) = \sum_{k=1}^{N_0} e^{-\rho k t} \left( \sum_{j=0}^{k} A_{j,k} (x) \cos(j \theta t) + B_{j,k}(x) \sin(j \theta t) \right), 
\end{equation}
where $A_{j,k} = \begin{pmatrix}
A_{j,k}^+ \\ A_{j,k}^-
\end{pmatrix}$ and $B_{j,k} = \begin{pmatrix}
B_{j,k}^+ \\ B_{j,k}^-
\end{pmatrix}$ are some functions of $\q H(\m R^2)$ to be determined.

We start by the expansion of $\q M(W)$.

\begin{claim} \label{claim:nlterm} We have
\[ 
\q M_{\m R^2}(W) 
= \sum_{\kappa=2}^{N_0} e^{-\kappa \rho t} \sum_{j=0}^\kappa \left( \tilde A_{j,\kappa}(x) \cos(j \theta t) + \tilde B_{j,\kappa}(x) \sin(j \theta t) \right) + O(e^{-(N_0+1) \rho t}) 
\]
where $\tilde A_{j,\kappa}, \tilde B_{j,\kappa} \in \q H(\m R^2)$ depend on $A_{l,n}$ and $B_{l,n}$ only for $l \le n \le \kappa-1$.
\end{claim}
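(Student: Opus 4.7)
The strategy is purely computational: Taylor-expand $\q M_{\m R^2}(W)$ in $W$ using the smoothness of $g$, substitute the explicit ansatz \eqref{eq:deffirstprofile}, then reorganize the resulting expression according to its exponential decay rate in $t$ and the frequencies appearing in the trigonometric polynomial.

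The first step is the Taylor expansion. Since $g\in\mathcal C^\infty$ and $\Phi\in\q H(\m R^2)$, the map $v\mapsto f(\Phi+v)-f(\Phi)-df(\Phi).v$ is smooth and vanishes to second order at $v=0$. Writing its Taylor polynomial of order $N_0$ at the origin yields
\[
\q M_{\m R^2}(v) = i\sum_{p=2}^{N_0} \frac{1}{p!}\, d^p f(\Phi).(v,\ldots,v) + R_{N_0}(v),
\]
where each multilinear form $d^p f(\Phi)$ has coefficients that are polynomials in $\Phi^+,\Phi^-$ and in $g^{(j)}(|\Phi|^2)$ with $j\le p$, all of them in $\q H(\m R^2)$ (which is stable by products and by composition with smooth functions of the modulus). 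A standard Sobolev-algebra estimate gives $\|R_{N_0}(v)\|_{H^s}\leq C(s)\|v\|_{H^s}^{N_0+1}$ as long as $\|v\|_{H^s}$ stays bounded; since the ansatz satisfies $\|W(t)\|_{H^s}=O(e^{-\rho t})$ for every $s$, one gets $R_{N_0}(W)=O(e^{-(N_0+1)\rho t})$.

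The second step is substitution and grouping by exponential decay rate. Writing $W(t,x)=\sum_{k=1}^{N_0} e^{-\rho k t}\,\Omega_k(t,x)$ with $\Omega_k(t,x):=\sum_{j=0}^k (A_{j,k}(x)\cos(j\theta t)+B_{j,k}(x)\sin(j\theta t))$, multilinearity of $d^p f(\Phi)$ gives
\[
d^p f(\Phi).(W,\ldots,W) = \sum_{k_1,\ldots,k_p=1}^{N_0} e^{-\rho(k_1+\cdots+k_p)t}\, d^p f(\Phi).(\Omega_{k_1},\ldots,\Omega_{k_p}).
\]
Setting $\kappa:=k_1+\cdots+k_p$, contributions with $\kappa>N_0$ decay at least as $e^{-(N_0+1)\rho t}$ and are absorbed in the remainder, so only $2\leq\kappa\leq N_0$ matter. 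Since $p\ge 2$ and every $k_l\ge 1$, each index satisfies $k_l\leq\kappa-(p-1)\leq\kappa-1$, which is exactly the dependency announced in the claim.

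The last step is the trigonometric reorganization. Since each $\Omega_{k_l}$ has degree $\leq k_l$ in $\cos(\theta t),\sin(\theta t)$, the product $d^p f(\Phi).(\Omega_{k_1},\ldots,\Omega_{k_p})$ is a finite sum of terms whose time dependence is $\prod_{l=1}^p \tau_l(j_l\theta t)$ with $\tau_l\in\{\cos,\sin\}$ and $0\leq j_l\leq k_l$, and whose spatial part is a product of elements of $\q H(\m R^2)$, hence still in $\q H(\m R^2)$. Iterated application of the product-to-sum identities ($2\cos\alpha\cos\beta=\cos(\alpha-\beta)+\cos(\alpha+\beta)$, and so on) reduces each such product to a real linear combination of $\cos(m\theta t)$ and $\sin(m\theta t)$ with $0\leq m\leq j_1+\cdots+j_p\leq\kappa$. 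Collecting the coefficients of $\cos(m\theta t)$ and $\sin(m\theta t)$ at each level $\kappa$ then defines $\tilde A_{m,\kappa}$ and $\tilde B_{m,\kappa}$ in $\q H(\m R^2)$ and produces the claimed expansion. No step poses a real obstacle; the only point requiring some care is the uniform-in-time Sobolev-algebra bound on $R_{N_0}(W)$, which follows from the $H^s$-boundedness of $W(t)$ for every $s$.
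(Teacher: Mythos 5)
Your proposal is correct and follows essentially the same route as the paper: Taylor-expand $\q M_{\m R^2}$ to order $N_0$ with an $O(|v|^{N_0+1})$ remainder, substitute the ansatz, group terms by the total decay rate $\kappa$, and linearize the trigonometric products to get frequencies $j\theta$ with $j\le\kappa$. Your bookkeeping via ordered tuples $(k_1,\dots,k_p)$ even streamlines the one point the paper has to argue by cases, namely that only $A_{l,n},B_{l,n}$ with $n\le\kappa-1$ can appear: since $p\ge 2$ and each $k_l\ge 1$, the bound $k_l\le\kappa-(p-1)\le\kappa-1$ is immediate, whereas the paper's multinomial formulation requires distinguishing whether the maximal occurring index has multiplicity $1$ or $\ge 2$.
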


\begin{proof}
First we use a Taylor expansion. Due to smoothness of $f$ and $\Phi \in \q H(\m R^2)$, and as $\q M_{\m R^2}$ is at least quadratic in $v$, there exists a polynomial $P_{N_0} \in \q H(\m R^2)[X,Y]$ with coefficients in $\q H(\m R^2)$, and valuation at least 2, such that~:
\[ \q M_{\m R^2}(v) = P_{N_0}(v^+,v^-) + O(|v|^{N_0+1}) = \sum_{m =2}^{N_0} \sum_{j=0}^{m} 
\begin{pmatrix}
P_{j,m}(x) v_+^j v_-^{m-j} \\
Q_{j,m} (x)  v_+^j v_-^{m-j}
\end{pmatrix} + O(v^{N_0+1}), 
\]
where $P_{j,m}, \ Q_{j,m} \in \q H(\m R)$. 

Consider now the term $W_+^nW_-^{m-n}$ and use \eqref{eq:deffirstprofile}.
It writes 
\begin{multline*}
\left( \sum_{k=1}^{N_0} e^{-\rho k t} \left( \sum_{l=0}^k A^+_{l,k} \cos (l\theta t) + B^+_{l,k} \sin (l \theta t) \right) \right)^n \\
\times \left( \sum_{k=1}^{N_0} e^{-\rho k t} \left( \sum_{l=0}^k A^-_{l,k} \cos (l\theta t) + B^-_{l,k} \sin (l \theta t) \right) \right)^{m-n}.
\end{multline*}
Now, the multinomial development gives
\begin{multline*} 
\sum_{\begin{subarray}{c} i_1 + \cdots + i_{N_0} =n \\ j_1 + \cdots + j_{N_0} =m-n \end{subarray}} 
\frac{n!}{i_1! \cdots i_{N_0}!}  \frac{(m-n)!}{j_1! \cdots j_{N_0}!} e^{-\rho t \sum_{k=1}^{N_0} k (i_k+j_k) }  \\
\times \prod_{k=1}^{N_0} \left[\left( \sum_{l=0}^{k} \left( A_{l,k}^+ (x) \cos(l \theta t) + B_{l,k}^+(x) \sin(l \theta t) \right) \right)^{i_k}\right.\\
\times\left.\left( \sum_{l=0}^{k} \left( A_{l,k}^- (x) \cos(l \theta t) + B_{l,k}^-(x) \sin(l \theta t) \right) \right)^{j_k} \right].
\end{multline*}

Fix some $(i_k)_k$, $(j_k)_k$ and define the decay rate $\kappa = \sum_{k=1}^{N_0} k (i_k+j_k)$. Then 
\[ \kappa \ge \sum_{k=1}^{N_0} (i_k+j_k) = n+(m-n)=m \ge 2. \]
The product factor is a trigonometric polynomial in $t$, it can be linearized into a sum of $\sin$ and $\cos$ with frequency $\ell \theta$ and $\ell \le \sum_k k (i_k+j_k) = \kappa$.

Of course, as $W \in \q H(\m R^2)$, the higher order terms (i.e. with $\kappa \ge N_0+1$) all fit into $O(e^{-(N_0+1)\rho t})$.

It is now clear that $\tilde A_{j,\kappa}$ and $\tilde B_{j,\kappa}$ are polynomial in $A_{j,k}$, $B_{j,k}$, $P_{n,m}$, and $Q_{n,m}$. It remains to see that the $A_{j,k}$ or $B_{j,k}$ that intervene (i.e $i_k+ j_k >0$) come with $k \le \kappa -1$. Let $a$ be the maximal index such that $i_a + j_a >0$. Recall $i_1 + \cdots +i_{N_0} + j_1 + \cdots +j_{N_0} = m \ge 2$. If $i_a+j_a \ge 2$, we have $2a \le a (i_a+j_a) \le \kappa$ so that (as $\kappa \ge m \ge 2$) $a \le \kappa-1$. If $i_a+j_a =1$, there exist $b \ge 1$, $b \ne a$, such that $i_b+j_b \ge 1$ and 
\[ \kappa = \sum_k k (i_k+j_k) \ge a (i_a +j_a) + b(i_b + j_b) \ge a +1. \] 
Finally the product has the desired properties.
\end{proof}

\begin{proof}[Proof of Proposition~\ref{prop:profile}]
By definition of $W$, we can compute:
\begin{multline*}
(\partial_t W + L_{\m R^2} W) =  \sum_{k=1}^{N_0} e^{-\rho k t} \left( \sum_{j=0}^{k} (L_{\m R^2} A_{j,k} + j \theta B_{j,k} - k \rho A_{j,k}) \cos(j \theta t) \right. 
\\
\left. \vphantom{\sum_N^N} + (L_{\m R^2} B_{j,k} - j \theta A_{j,k} - k \rho B_{j,k}) \sin(j \theta t) \right).
\end{multline*}

From the computations of Claim~\ref{claim:nlterm}, it suffices to solve for all $0\leq j \le k \le N_0$
\begin{equation} \label{eq:Ajk}
\left\{ \begin{array}{r}
L_{\m R^2} A_{j,k} + j\theta B_{j,k} - k \rho A_{j,k} =  \tilde A_{j,k}, \\
L_{\m R^2} B_{j,k} - j \theta A_{j,k} - k \rho B_{j,k} = \tilde B_{j,k}.
\end{array} \right.
\end{equation}
Obviously, one starts to solve for $k=1$, then from this $k=2$ etc. so that at all stages $\tilde A_{j,k}$ and $\tilde B_{j,k}$ are well defined (remark that $\tilde A_{j,1} = \tilde B_{j,1} =0$).

We initialized the induction process by setting $A_{1,1} = a Y_1$, $B_{1,1} = a Y_2$, and $A_{0,1}=B_{0,1}=0$.
Assume that $A_{j,k}$ and $B_{j,k}$ are constructed up to $k \le k_0-1$ and belong to $\q H(\m R^2)$, we now construct $A_{j,k_0}$, $B_{j,k_0}$ for all $j\le k_0$. By Claim~\ref{claim:nlterm}, all $\tilde A_{j,k_0}$ and $\tilde B_{j,k_0}$ are constructed for $j \le k_0$ and belong to $\q H(\m R^2)$.

Consider now the operator $L_{j,k_0} = L_{\m C^2} - (k_0 \rho + i j\theta) \Id$, $L_{j,k_0}: \q H(\m C^2) \to \q H(\m C^2)$. As $e = \rho + i \theta$ is an eigenvalue of $L_{\m C^2}$ with maximal real part, for all $k_0 \ge 2$ and all $j$, $k_0 \rho + i j\theta \notin \Sp(L)$ so that $L_{j,k_0}$ is invertible. Let $X = L_{j,k_0}^{-1} (\tilde A_{j,k_0} + i \tilde B_{j,k_0})$, and define $C := \Re(X) = \begin{pmatrix} \Re(X^+) \\ \Re(X^-) \end{pmatrix}$, $D := \Im(X) = \begin{pmatrix} \Im(X^+) \\ \Im(X^-) \end{pmatrix}$, so that $C,D \in \q H(\m R^2)$ and $X = C +iD$. Then we compute
\begin{align*}
\tilde A_{j,k_0} + i \tilde B_{j,k_0} & = L_{j,k_0} (C+iD) \\
& = L_{\m R^2} C + i L_{\m R^2} D - k_0 \rho C - i k_0 D - i j \theta C + j \theta D  \\
& = (L_{\m R^2}C - k_0 \rho C + j \theta D) + i (L_{\m R^2}D - j \theta C - k_0 \rho D).
\end{align*}
Hence $A_{j,k_0} = C$ and $B_{j,k_0} =D$ are solutions to the system \eqref{eq:Ajk}.
\end{proof}

We now switch back notation from vector valued functions to complex valued functions and summarize what we have obtained. We use again the subscript $1$. Hence we can consider $V_1^{N_0}$, $U_1^{N_0}$ defined by 
\begin{equation*}
V_1^{N_0}(t,x): =  e^{i \omega t} W^{N_0}(t,x), \quad U_1^{N_0}(t,x)  := R_1(t,x) + V_1^{N_0}(t,x).
\end{equation*}
Then we define
\begin{align*}
Err_1^{N_0}(t,x) & :=  i \partial_t U_1^{N_0} + \Delta U_1^{N_0} + f(U_1^{N_0}) \\
& = i \partial_t V_1^{N_0} + \Delta V_1^{N_0} + f(R_1(t) + V_1^{N_0}) - f(R_1(t)) \\
& = i ( \partial_t V_1^{N_0} + \q L_{\m C} V_1^{N_0} - \q N_{\m C}(V_1^{N_0})) \\
& = i e^{i \omega t} (\partial_t W^{M_0} + L_{\m C} W^{N_0} - \q M_{\m C}(W^{N_0})).
\end{align*}
By Proposition~\ref{prop:profile}, $Err_1^{N_0}(t,x) = O(e^{-(N_0+1)\rho t})$. Also, from \eqref{eq:deffirstprofile} we deduce $V_1^{N_0}(t) = a e^{i\omega t} Y(t) + O(e^{-2\rho t})$, so that for all $s \ge 0$, there exists $C(N_0,s)$ such that
\begin{equation} \label{eq:Vdecay}
\forall t \ge 0, \quad \| V_1^{N_0}(t) \|_{H^s(\Rd)} \le C(N_0,s) e^{-\rho t}.
\end{equation}

\subsection{Proofs of Theorems~\ref{thm:2} and~\ref{thm:3}}

\begin{proof}[Proof of Theorem~\ref{thm:2}]
Let $N_0$ to be determined later, we do a fixed point around $U_1^{N_0}(t)$. Suppose $u = U_1^{N_0}(t) + w(t)$ (with $w(t) \to 0$ as $t \to +\infty$) is a solution to \eqref{eq:nls}, then 
\[ i\partial_t w + \Delta w + f(U_1^{N_0}+w) - f(U_1^{N_0}) - Err_1^{N_0}(t) =0 \]
From this, Duhamel's Formula gives, for $t \le s$,
\[ 
w(s) = e^{i\Delta (s-t)} w(t) + i\int_t^s \! e^{i \Delta(s-\tau)} \left(f((U_1^{N_0}+w)(\tau)) - f(U_1^{N_0}(\tau)) -  Err_1^{N_0}(\tau)\right) d\tau, 
\]
so that
\[ 
e^{-i\Delta s} w(s) = e^{-i\Delta t} w(t) + i\int_t^s e^{-i\Delta \tau} \left(f((U_1^{N_0}+w)(\tau)) - f(U_1^{N_0}(\tau)) -  Err_1^{N_0}(\tau)\right) d\tau.
\]
Letting $s \to +\infty$, as $w(s) \to 0$, we are looking for a solution to the fixed point equation
\[  
w(t) = - i\int_t^{+\infty} e^{i\Delta (t-\tau)} (f((U_1^{N_0}+w)(\tau)) - f(U_1^{N_0}(\tau)) -  Err_1^{N_0}(\tau)) d\tau.
\]
Hence, we define the map
\[ 
v \mapsto \Psi(v) = - i\int_t^{+\infty} e^{i\Delta (t-\tau)} (f((R_1+V_1^{N_0}+v)(\tau)) - f((R_1+V_1^{N_0})(\tau)) -  Err_1^{N_0}(\tau)) d\tau. 
\]
Fix $\sigma > \frac{d}{2}$, so that $H^\sigma(\Rd)$ is an algebra, and let $B,T_0$ to be determined later.  
For $w\in\mathcal{C}((T_0,+\infty),H^\sigma(\Rd))$ define 
\[
 \| w \|_{ X_{T_0,N_0}^\sigma} = \sup_{t \ge T_0} e^{(N_0+1) \rho t} \| w(t) \|_{H^\sigma(\Rd)},
\]
to be the norm of the Banach space
\[ 
X_{T_0,N_0}^\sigma:=\left\{ w\in\mathcal{C}((T_0,+\infty),H^\sigma(\Rd))\middle|  \| w \|_{ X_{T_0,N_0}^\sigma}<+\infty \right\}.
\]
Consider the ball of radius $B$ of $X_{T_0,N_0}^\sigma$
\[ 
X_{T_0,N_0}^\sigma(B) := \left\{ w \in X_{T_0,N_0}^\sigma \middle| \| w \|_{ X_{T_0,N_0}^\sigma}\leq B \right\}. 
\]
By \eqref{eq:Vdecay}, we can assume $T_0$ is large enough so that
\[ 
\|V^{N_0}_1\|_{H^\sigma(\Rd)} \le 1 \text{ and also }B e^{-(N_0+1) \rho T_0} \le 1. 
\]
Our problem is to find a fixed point for $\Psi$, we will find it in $X_{T_0,N_0}^\sigma(B)$ for adequate parameters.

Notice that for $t \ge T_0$, $\| V_1^{N_0}(t) \|_{H^\sigma(\Rd)} \le 1$. Hence, we will always work in the $H^\sigma(\Rd)$-ball of radius $r_\sigma = \| \Phi_1 \|_{H^\sigma(\Rd)} + 2$. Due to $\mathcal{C}^{\sigma+1}$ smoothness of $f$, there exists a constant $K_\sigma$ such that
\[
\forall a,b \in B_{H^\sigma(\Rd)}(r_\sigma), \quad \|f(a) - f(b) \|_{H^\sigma(\Rd)} \le K_\sigma \|a-b\|_{H^\sigma(\Rd)}.
\]
In particular, for all $t$,
\[ \| f(R_1(t) + V_1^{N_0}(t) + v) - f(R_1(t)+ V_1^{N_0}(t)) \|_{H^\sigma(\Rd)} \le K_\sigma \| v \|_{H^\sigma(\Rd)}. \]
Hence, as $e^{i \Delta(t-s)}$ is an isometry in $H^\sigma(\Rd)$, for any $v \in X_{T_0,N_0}^\sigma(B)$ we have
\begin{align*}
\MoveEqLeft \| \Psi(v)(t) \|_{H^\sigma(\Rd)} \\
& = \left\| \int_t^{+\infty} e^{i \Delta(t-\tau)} \left[f(R_1 + V_1^{N_0} + v) - f(R_1 + V_1^{N_0}) - Err_1^{N_0} \right](\tau) d\tau \right\|_{H^\sigma(\Rd)} \\
& \le \int_t^{+\infty} ( \| f(R_1 + V_1^{N_0} + v) - f(R_1 + V_1^{N_0})  \|_{H^\sigma(\Rd)} + \| Err_1^{N_0}(\tau) \|_{H^\sigma(\Rd)} ) d\tau \\
& \le \int_t^{+\infty} (K_\sigma  \| v \|_{H^\sigma(\Rd)} + C(N_0,\sigma) e^{-(N_0+1) \rho \tau} ) d\tau \\
& \le \frac{K_\sigma B + C(N_0,\sigma)}{(N_0+1) \rho} e^{-(N_0+1) \rho t}.
\end{align*}
First choose $N_0$ large enough so that $\frac{K_\sigma}{(N_0+1) \rho} \le \frac{1}{2}$. Then choose $B = 2 \frac{C(N_0,\sigma)}{(N_0+1)\rho}$. Finally choose $T_0$ large enough so that $\quad C(N_0,\sigma) e^{-\rho T_0} \le 1.$
Hence we get
\[ \| \Psi(v)(t) \|_{H^\sigma(\Rd)} \le B e^{-(N_0+1) \rho t}. \]
 This shows that $\Psi$ maps $X^\sigma_{T_0,N_0}(B)$ to itself. 
Let us now show that $\Psi$ is a contraction in $X^\sigma_{T_0,N_0}(B)$. Let $v,w \in X^\sigma_{T_0,N_0}(B)$ then we have
\[  \Psi(v)(t) - \Psi(w)(t)   = -i\int_t^{+\infty} e^{i\Delta (t-s)} (f(R_1 + V_1^{N_0} + v) - f(R_1 +V_1^{N_0} + w)) ds. \]
As previously, we have
\begin{align*}
\MoveEqLeft 
e^{(N_0+1) \rho t} \| \Psi(v)(t) - \Psi(w)(t) \|_{H^\sigma(\Rd)}\\
& = e^{(N_0+1) \rho t} \left\| \int_t^{+\infty} e^{i\Delta (t-s)} (f(R_1 + V_1^{N_0} + v) - f(R_1 +V_1^{N_0} + w)) ds \right\|_{H^\sigma(\Rd)} \\
& \le e^{(N_0+1) \rho t} \int_t^{+\infty} \| f(R_1 + V_1^{N_0} + v) - f(R_1 +V_1^{N_0} + w) \|_{H^\sigma(\Rd)} ds \\
& \le e^{(N_0+1) \rho t} \int_t^{+\infty} K_\sigma \| v(s) - w(s) \|_{H^\sigma(\Rd)} ds \\
& \le K_\sigma e^{(N_0+1) \rho t} \int_t^{+\infty} e^{-(N_0+1)\rho s} \| v - w \|_{X^\sigma_{T_0,N_0}} ds \\
& \le K_\sigma e^{(N_0+1) \rho t} \| v - w \|_{X^\sigma_{T_0,N_0}} \frac{e^{-(N_0+1) \rho t}}{(N_0+1) \rho} \\
& \le \frac{K_\sigma}{(N_0+1) \rho}  \| v - w \|_{X^\sigma_{T_0,N_0}}.
\end{align*}
Taking the supremum over $t \ge T_0$, we deduce that
\[ \| \Psi(v) - \Psi(w) \|_{X^\sigma_{T_0,N_0}} \le \frac{K_\sigma}{(N_0+1) \rho} \| v - w \|_{X^\sigma_{T_0,N_0}} \le \frac{1}{2} \| v - w \|_{X^\sigma_{T_0,N_0}} . \]
Hence, $\Psi$ is a contraction on $X^\sigma_{T_0,N_0}(B)$, and has a unique fixed point $\bar v$. Notice that we have obtained a unique fixed point for any $\sigma \ge \frac{d}{2}$: from this we deduce that $\bar v$ does not depend on $\sigma$, and hence, $\bar v \in \mathcal C^\infty([T_0,{+\infty}), H^\infty(\Rd))$. Then $\bar u = R_1 + V_1^{N_0} + \bar v$ is the desired solution.
\end{proof}

\begin{proof}[Proof of Theorem~\ref{thm:3}]
The proof is essentially a generalization of that of Theorem~\ref{thm:2}. Let $v_\natural$ to be fixed later and assume that $v_\star>v_\natural$. Let $N_0$ to determined later and $a \in \m R$, from this we dispose of a profile $V^{N_0}_1(t)$, $U_1^{N_0}(t)$, an error term $Err_1^{N_0}(t)$ associated to $R_1(t)$, and an eigenvalue 
 $\lambda = \rho + i \theta$ of $L$. 
We look for a solution of the form $u(t) = U^{N_0}_1(t) + \sum_{j \ge 2} R_j(t) + w(t)$. Then $w$ satisfies
\[ 
i \partial_t w + \Delta w + f(U^{N_0}_1 + \sum_{j \ge 2} R_j + w) - f(U_1^{N_0}) - \sum_{j \ge 2} f(R_j) - Err^{N_0}_1 =0. 
\]
Hence considering the map
\[
v \mapsto \Psi(v) 
= - i\int_t^{+\infty} e^{i\Delta (t-s)} (f((U^{N_0}_1 + \sum_{j \ge 2} R_j+v)(s)) - f(U_1^{N_0}(s))
 - \sum_{j \ge 2} f(R_j(s)) -  Err^{N_0}_1(s)) ds, 
\]
we are looking for a fixed point for $\Psi$, in the set $X_{T_0,N_0}^\sigma(B)$ (defined in the proof of Theorem~\ref{thm:2})
for adequate parameters $T_0,N_0,B, \sigma$. Let $\sigma > \frac{d}{2}$. As previously, let $T_0$ large enough so that $\| V_1^{N_0} (t) \|_{H^s(\Rd)} \le 1$ for $t \ge T_0$, and $B e^{-(N_0+1) \rho T_0} \le 1$, so that we remain in a ball of radius 1 in $H^\sigma(\Rd)$.

Using exponential localization of the solitons $R_j$ and of the profile $U^{N_0}_1$, we deduce as in the proof of Theorem~\ref{thm:2} that for some $K_\sigma = K(f, \| U^{N_0}_1\|_{H^\sigma(\Rd)} + \sum_{j \ge 2} \| R_j \|_{H^\sigma(\Rd)} +1)$, we have
\[
\| f((U^{N_0}_1 + \sum_{j \ge 2} R_j+v)) - f(U_1^{N_0}) - \sum_{j \ge 2} f(R_j) \|_{H^{\sigma}(\Rd)} 
\le K_\sigma \| v \|_{H^\sigma(\Rd)} + O(e^{- 2 \alpha\sqrt{\w_\star} v_\star t}), 
\]
possibly by taking a smaller value of $\w_\star$ such that $\w_\star \le \alpha_1$, where $\alpha_1$ is the (exponential) decay rate of $U^{N_0}_1$. Notice that $\alpha_1$ is independent of $N_0$, due to the construction of $U^{N_0}_1$. Hence we have as in Theorem~\ref{thm:2}:
\begin{align*}
\MoveEqLeft 
\| \Psi(v)(t) \|_{H^\sigma(\Rd)} \\
& = \left\| \int_t^{+\infty} e^{i \Delta(t-s)} (f(U^{N_0}_1 + \sum_{j \ge 2} R_j+v) - f(U_1^{N_0}) - \sum_{j \ge 2} f(R_j) -  Err^{N_0}_1) ds, \right\|_{H^\sigma(\Rd)} \\
& \le \int_t^{+\infty}  \| f(U^{N_0}_1 + \sum_{j \ge 2} R_j+v) - f(U_1^{N_0}) - \sum_{j \ge 2} f(R_j)  \|_{H^\sigma(\Rd)} + \| Err^{N_0}_1 \|_{H^\sigma(\Rd)} ds \\
& \le \int_t^{+\infty} (K_\sigma  \| v \|_{H^\sigma(\Rd)} + C(N_0,\sigma) e^{-(N_0+1) \rho s} + C(\sigma) e^{- 2\alpha \sqrt{\w_\star} v_\star s} ) ds \\
& \le \frac{K_\sigma B + C(N_0,\sigma)}{(N_0+1) \rho} e^{-(N_0+1) \rho t} + \frac{C(\sigma)}{2\alpha \sqrt{\w_\star} v_\star} e^{- 2\alpha \sqrt{\w_\star} v_\star t}.
\end{align*}
First choose $N_0$ large enough so that $\frac{K_\sigma}{(N_0+1) \rho} \le \frac{1}{3}$ and set $B:=\frac{3C(N_0,\sigma)}{(N_0+1)\rho}$. Recall that $v_\star>v_\natural$. We chose $v_\natural$ large enough so that from the choice of $\w_\star, v_\star$, we have 
\[ \frac{C(\sigma)}{2\alpha \sqrt{\w_\star} v_\star} \le \frac{B}{3}, \text{ and } 2\alpha \sqrt{\w_\star} v_\star \ge (N_0+1) \rho. \]
Finally choose $T_0$ large enough so that
\[ B e^{-(N_0+1) \rho T_0} \le 1,\quad \text{and} \quad C(N_0,\sigma) e^{-\rho T_0} \le 1. \]
From this, $\| \Psi(v)(t) \|_{H^\sigma(\Rd)} \le B e^{-(N_0+1) \rho t}$ for $t \ge T_0$, i.e. $\Psi$ maps $X^\sigma_{T_0,N_0}(B)$ to itself. Similar computations show that $\Psi$ is a contracting map, so that it has a unique fixed point $\bar w$. Again as in Theorem~\ref{thm:2}, $\bar w$ does not depend on $\sigma$ and $\bar w \in \mathcal C^\infty([T_0,{+\infty}),H^\infty(\Rd))$. Then $\bar u = U^{N_0}_1 + \sum_{j \ge 2} R_j(t) + \bar w(t)$ fulfills the requirements.
\end{proof}

\appendix

\section{Exponential decay of eigenfunctions to matrix Schrödinger operators}\label{ap:decay}

We consider an operator $L:H^2(\Rd,\C^2)\subset L^2(\Rd,\C^2)\to L^2(\Rd,\C^2)$ of the form
\[ 
L
= 
\begin{pmatrix}
W_1 & - \Delta + \omega + V_1 \\
\Delta - \omega + V_2 & W_2
\end{pmatrix}
\]
where $\w>0$ and $V_1,V_2,W_1,W_2$ are complex-valued potentials satisfying the following assumptions.
\begin{itemize}
\item[(VW1)] There exists $q\in(\max\{2,\frac{d}{2}\},+\infty]$ such that $V_k,W_k\in L^q(\Rd)$ for $k=1,2$.
\item[(VW2)] $\lim_{|x|\to+\infty}V_k(x)=\lim_{|x|\to+\infty}W_k(x)=0$ for $k=1,2$.
\end{itemize}
Assumptions (VW1)-(VW2) are probably not optimal, but they are sufficient in the context in which we want to apply the following Proposition~\ref{thm:decayofeigenfunction}. 

Our goal is to prove that if $L$ has an eigenvalue which does not belong to the set $\{iy,y\in\R, |y|\geq\w\}$ (which is the essential spectrum of $L$, see e.g. \cite{HuLe07}) then the corresponding eigenvectors are exponentially decaying at infinity. Note that it was previously known only for eigenfunctions corresponding to eigenvalues lying in the  strip $\{z\in\C,|\Im(z)|<\w\}$ and with a restricted class of potentials (see \cite{HuLe07}).

\begin{prop}\label{thm:decayofeigenfunction}
Assume that (VW1)-(VW2) hold. Take  
$u,v\in H^2(\Rd,\C)$, $\lambda\in\C\setminus\{iy,y\in\R, |y|\geq\w\}$, and suppose that for 
$
U:=\begin{pmatrix}
   u\\v
  \end{pmatrix}
$
we have $LU=\lambda U$. Then there exist $C>0$ and $\alpha>0$ such that for all $x\in\Rd$ we have
\[
|u(x)|+|v(x)|\leq Ce^{-\alpha|x|}.
\]
\end{prop}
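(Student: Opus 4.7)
The plan is to decouple the $2 \times 2$ system at spatial infinity into two scalar Helmholtz-type equations and exploit weighted $L^2$ estimates for each. Introduce $\psi_\pm := u \pm iv$; expanding $LU = \lambda U$ and taking suitable linear combinations yields
\[
(-\Delta + \mu_\pm)\psi_\pm = \widetilde V_\pm(x)\psi_+ + \widetilde W_\pm(x)\psi_-,
\]
with $\mu_\pm := \omega \mp i\lambda$ and coefficients $\widetilde V_\pm,\widetilde W_\pm$ that are linear combinations of $V_1, V_2, W_1, W_2$, hence still satisfying (VW1)--(VW2). Writing $\lambda = a+ib$, the assumption $\lambda \notin \{iy : |y|\geq \omega\}$ is exactly the condition $\mu_\pm = (\omega\pm b) \mp ia \notin (-\infty,0]$. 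The principal square root $\sqrt{\mu_\pm}$ is then well defined with strictly positive real part $\beta_\pm := \Re\sqrt{\mu_\pm}>0$, and I set $\alpha_\ast := \min(\beta_+,\beta_-)$.

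\textbf{Weighted $L^2$ bound via resolvent perturbation.} Fix $\alpha \in (0,\alpha_\ast)$ and consider the weighted space $L^2_\alpha := \{f : e^{\alpha|x|}f \in L^2(\R^d)\}$. A classical Paley--Wiener/Agmon computation shows $(-\Delta + \mu_\pm)^{-1}$ is bounded on $L^2_\alpha$: the Fourier multiplier $(|\xi|^2+\mu_\pm)^{-1}$ extends analytically and boundedly to the complex tube $\{\xi + i\eta : |\eta|\leq\alpha\}$, thanks to the equivalence $\alpha < \beta_\pm \Longleftrightarrow (\Im\mu_\pm)^2 > 4\alpha^2(\alpha^2 - \Re\mu_\pm)$ (non-trivial only when $\Re\mu_\pm < 2\alpha^2$). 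Using (VW2), split each coefficient $\widetilde V_\pm = \widetilde V_\pm \chi_{\{|x|\leq R\}} + \widetilde V_\pm \chi_{\{|x|>R\}}$ (similarly for $\widetilde W_\pm$) with $R$ so large that the $L^q + L^\infty$ norm of the tail is arbitrarily small. Rewriting the system as
\[
(-\Delta + \mu_\pm)\psi_\pm - (\text{tail coefficients})(\psi_+,\psi_-) = (\text{compactly supported terms}),
\]
the left-hand side, viewed as an operator on $(L^2_\alpha)^2$, is a small perturbation of the invertible diagonal operator $(-\Delta+\mu_+)\oplus(-\Delta+\mu_-)$, hence still invertible by Neumann series. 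The right-hand side is compactly supported, hence trivially in $(L^2_\alpha)^2$, so $\psi_\pm \in L^2_\alpha$.

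\textbf{Upgrade to pointwise decay.} Setting $\widetilde \psi_\pm := e^{\alpha|x|}\psi_\pm \in L^2$, the conjugated equation is a second-order elliptic PDE with first-order drift and coefficients in $L^q$. Standard elliptic bootstrap combined with Sobolev embedding (using $q > d/2$ from (VW1)) yields $\widetilde \psi_\pm \in L^\infty$. Hence $|\psi_\pm(x)|\leq Ce^{-\alpha'|x|}$ for any $\alpha' < \alpha$, and recovering $u = (\psi_++\psi_-)/2$, $v = -i(\psi_+-\psi_-)/2$ finishes the proof.

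\textbf{Main obstacle.} The genuinely delicate case, and the improvement over \cite{HuLe07}, is $|\Im\lambda| > \omega$: then one of $\Re\mu_\pm$ becomes negative, and the standard Agmon identity (testing against $e^{2\alpha|x|}\bar\psi_\pm$) fails to be coercive. The key is to replace the naive sufficient condition $\alpha^2 < \Re\mu_\pm$ by the sharper complex-analytic condition $\alpha < \Re\sqrt{\mu_\pm}$, which couples $\Re\mu_\pm$ and $\Im\mu_\pm$ and is exactly the threshold compatibility between $\mu_\pm \notin (-\infty,0]$ and an exponential weight of rate $\alpha$.
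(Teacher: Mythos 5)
Your proposal is correct in substance and reaches the same two structural milestones as the paper, but by a genuinely different mechanism in the decisive step. The first half is identical: your substitution $\psi_\pm=u\pm iv$ is exactly the paper's conjugation by $P=\begin{pmatrix}1&i\\1&-i\end{pmatrix}$, which diagonalizes the principal part into $-\Delta+\mu_\pm$ with $\mu_\pm=\omega\mp i\lambda$, and your observation that $\lambda\notin\{iy:|y|\ge\omega\}$ is equivalent to $\mu_\pm\notin(-\infty,0]$ matches the paper's hypothesis on the Helmholtz parameters. Where you diverge is in how the decay is extracted. The paper works pointwise: it proves $|g^d_\mu(x)|\le C g^d_{-\tau}(x)$ for the fundamental solutions via Hankel-function asymptotics (treating odd and even $d$ separately), dominates $|\psi_\pm|$ by $g^d_{-\tau}*|f|$, and closes with a comparison/maximum-principle argument in the style of Deng--Li. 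You instead run a Combes--Thomas/Agmon argument: boundedness of $(-\Delta+\mu_\pm)^{-1}$ on $e^{-\alpha|x|}L^2$ for $\alpha<\Re\sqrt{\mu_\pm}$, a Neumann series against the small tail of the potentials, and an elliptic bootstrap to upgrade to pointwise bounds. Your threshold $\alpha<\Re\sqrt{\mu_\pm}$ is exactly the paper's rate $\sqrt{\tau}=\rho^{1/2}\sin(\theta/2)$ (with $-\mu=\rho e^{i\theta}$), and your algebraic criterion $(\Im\mu)^2>4\alpha^2(\alpha^2-\Re\mu)\iff\alpha<\Re\sqrt\mu$ is a clean way to see that the eigenvalues with $|\Im\lambda|>\omega$ are handled --- which is precisely the improvement over \cite{HuLe07} that both proofs are after. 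Your route avoids the special-function asymptotics and the dimension case split, at the cost of an extra bootstrap and an $\varepsilon$ loss in the rate (harmless here); the paper's route is more elementary and gives the pointwise bound directly.

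Two points you should make explicit to close the argument. First, the Neumann series produces \emph{a} solution of the rearranged system in $(e^{-\alpha|x|}L^2)^2$; to conclude that it equals $(\psi_+,\psi_-)$ you must invoke injectivity of the same perturbed operator on the unweighted space $(L^2)^2$ (which follows from the identical Neumann series at $\alpha=0$), since a priori the eigenfunction is only known to lie there. Second, the smallness of the tail perturbation as an operator on the weighted space should be justified from (VW2) (the tail is small in $L^\infty$, so multiplication by it is small on $e^{-\alpha|x|}L^2$); the $L^q$ integrability from (VW1) is what you need instead to check that the compactly supported right-hand side $V\chi_{\{|x|\le R\}}\psi$ lies in $L^2$, via $H^2\hookrightarrow L^{2q/(q-2)}$ and $q>\max\{2,d/2\}$ --- this is the role played in the paper by the preliminary regularity Lemma~\ref{lem:regularityeigenfunctions}. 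Both are routine, and neither affects the validity of your approach.
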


Our proof consists in obtaining estimates on fundamental solutions to Helmholtz equations and considering the eigenvalue problem $LU=\lambda U$ as an inhomogeneous problem.

\subsection{Fundamental solutions}

For a given $\mu\in\C$, a fundamental solution of the Helmholtz equation in $\Rd$ is a solution of 
\[
(-\Delta-\mu)g^d_\mu=\delta_0.
\]
Setting $\nu:=\frac{d-2}{2}$ fundamental solutions of the Helmholtz equation are given by
\[
g^d_\mu(x):=\frac{i\pi\mu^{\frac{\nu}{2}}}{2|x|^\nu(2\pi)^{\frac{d}{2}}}H^1_\nu(\sqrt{\mu}|x|),
\]
where $H^1_\nu$ is the first Hankel function (see e.g. \cite{AbSt64}).
For $\mu=\rho e^{i\theta}$ with $\rho\geq0$ and $\theta\in[0,2\pi)$ we defined $\sqrt{\mu}$  by $\sqrt{\mu}:=\rho^{\frac{1}{2}}e^{i\frac{\theta}{2}}$. Defining $\sqrt{\cdot}$ in this way ensures in particular that $g^d_\mu$ is square integrable for $\mu\not\in\R^+$. The fundamental solutions $g^d_\mu$ verify the recurrence relation 
\[
g^{d+2}_\mu(x)=-\frac{\frac{\partial}{\partial r}g_\mu^d(x)}{2\pi |x|}.
\]
We deduce the following formula for the fundamental solution. For $d=j+2l$ where $j=1,2$ and $l\in\N\setminus\{0\}$, we have
\begin{equation}\label{eq:formulaalld}
g_\mu^{j+2l}=\sum_{k=1}^l a_l^k(-1)^k(g^j_\mu)^{(k)}|x|^{-2l+k},
\end{equation}
where the coefficients $(a_l^k)$ are positive and the exponent $(k)$ denotes the $k$\textsuperscript{th} derivative.

\begin{lem}[Estimates on fundamental solutions]\label{lem:estimateonfundamentalsol}
Let $\mu\in\C\setminus\R^+$. Then there exists $\tau>0$ and $C>0$ such that 
\[
|g^d_\mu(x)|\leq Cg^d_{-\tau}(x)\mbox{ for all }x\in\Rd\setminus\{0\}.
\]
In particular, $g^d_\mu$ is exponentially decaying at infinity with decay rate $\sqrt{\tau}$, i.e. $|g^d_\mu(x)|\leq Ce^{-\sqrt{\tau}|x|} $ for $|x|$ large enough.  
\end{lem}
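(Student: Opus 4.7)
The plan is to extract the decay and singularity of $g^d_\mu$ from the classical asymptotics of the Hankel function $H^1_\nu$ at $0$ and at infinity, and to compare them directly with those of $g^d_{-\tau}$ for a well-chosen $\tau>0$.

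First I would fix the comparison parameter. Writing $\mu=\rho e^{i\theta}$ with $\rho>0$ and $\theta\in(0,2\pi)$ (possible since $\mu\notin\R^+$), one has $\sqrt{\mu}=\rho^{1/2}e^{i\theta/2}$, so that $\Im\sqrt{\mu}=\rho^{1/2}\sin(\theta/2)>0$. Pick any $\tau>0$ with $\sqrt{\tau}\le\Im\sqrt{\mu}$. For $\mu=-\tau$ we have $\sqrt{-\tau}=i\sqrt{\tau}$, and using the identity
\[
H^1_\nu(iz)=\frac{2}{i^{\nu+1}\pi}\,K_\nu(z),\qquad z>0,
\]
the explicit formula for $g^d_{-\tau}$ rewrites as
\[
g^d_{-\tau}(x)=\frac{\tau^{\nu/2}}{(2\pi)^{d/2}\,|x|^\nu}\,K_\nu(\sqrt{\tau}\,|x|),
\]
where $K_\nu$ is the modified Bessel function of the second kind. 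Since $K_\nu>0$ on $(0,+\infty)$, $g^d_{-\tau}$ is strictly positive on $\Rd\setminus\{0\}$, and the quotient $|g^d_\mu|/g^d_{-\tau}$ is a well-defined continuous function there.

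Second, I would compare both sides at infinity. The asymptotic expansion
\[
H^1_\nu(z)=\sqrt{\tfrac{2}{\pi z}}\,e^{i(z-\nu\pi/2-\pi/4)}\bigl(1+O(|z|^{-1})\bigr),\qquad |z|\to\infty,\ \arg z\in(-\pi+\delta,2\pi-\delta),
\]
plugged into the formula for $g^d_\mu$ (with $\arg\sqrt{\mu}=\theta/2\in(0,\pi)$) and the analogous asymptotic for $K_\nu(z)\sim\sqrt{\pi/(2z)}\,e^{-z}$, give
\[
|g^d_\mu(x)|\le C_\mu\,|x|^{-(d-1)/2}\,e^{-\Im\sqrt{\mu}\,|x|},\qquad g^d_{-\tau}(x)\sim C_\tau\,|x|^{-(d-1)/2}\,e^{-\sqrt{\tau}\,|x|},
\]
for $|x|$ large. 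Since $\sqrt{\tau}\le\Im\sqrt{\mu}$, the ratio $|g^d_\mu|/g^d_{-\tau}$ remains bounded as $|x|\to+\infty$. In particular this already yields the final sentence of the lemma about exponential decay with rate $\sqrt{\tau}$.

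Third, near the origin, the standard expansions $H^1_\nu(z)\sim -\frac{i\Gamma(\nu)}{\pi}(z/2)^{-\nu}$ for $\nu>0$ (respectively $H^1_0(z)\sim\frac{2i}{\pi}\log z$ for $d=2$) together with $K_\nu(z)\sim \frac{\Gamma(\nu)}{2}(z/2)^{-\nu}$ (respectively $K_0(z)\sim-\log z$) show that both $|g^d_\mu(x)|$ and $g^d_{-\tau}(x)$ behave like the Newtonian kernel $c_d|x|^{-(d-2)}$ when $d\ge 3$, and like $-(2\pi)^{-1}\log|x|$ when $d=2$, with constants that are independent of the spectral parameter. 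Hence their ratio admits a positive finite limit as $|x|\to 0$. The one-dimensional case $d=1$ can either be handled directly from the explicit formula $g^1_\mu(x)=\frac{i}{2\sqrt{\mu}}e^{i\sqrt{\mu}|x|}$, or be reduced to the higher-dimensional analysis above. Combining the two limits at $0$ and $\infty$ with continuity of $|g^d_\mu|/g^d_{-\tau}$ on the compact annuli of $\Rd\setminus\{0\}$ gives a uniform bound $|g^d_\mu(x)|\le Cg^d_{-\tau}(x)$ on $\Rd\setminus\{0\}$.

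The main technical obstacle is the careful bookkeeping of branches of $z\mapsto\sqrt{z}$ and of the phase factors in the Hankel asymptotics, which could in principle produce oscillatory behavior. However, because we only need an \emph{upper} bound on $|g^d_\mu|$ and a \emph{lower} bound on the positive function $g^d_{-\tau}$, these oscillations are harmless: only the modulus of the exponential (i.e.\ the imaginary part of $\sqrt{\mu}|x|$) matters on the left-hand side, and this is exactly what dictates the choice of $\tau$.
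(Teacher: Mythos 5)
Your proof is correct. For even $d$ it is essentially the paper's own argument: compare the small-argument and large-argument asymptotics of $H^1_\nu$ (equivalently $K_\nu$ for the comparison kernel $g^d_{-\tau}$), note that the leading singularity at the origin is the $\mu$-independent Newtonian kernel $c_d|x|^{-(d-2)}$ (resp.\ $-\frac{1}{2\pi}\log|x|$ for $d=2$), that the decay at infinity is governed by $e^{-\Im\sqrt{\mu}\,|x|}$ with matching algebraic prefactors $|x|^{-(d-1)/2}$, and close the intermediate region by continuity and positivity of $g^d_{-\tau}$ on compact annuli. Where you genuinely diverge from the paper is in odd dimensions: there the paper avoids Hankel asymptotics altogether, starting from the elementary formula $g^1_\mu(x)=\frac{i}{2\sqrt{\mu}}e^{i\sqrt{\mu}|x|}$, proving the derivative bounds $|(g^1_\mu)^{(k)}|\leq C(-1)^k(g^1_{-\tau})^{(k)}$, and propagating them to $d=1+2l$ through the recurrence $g^{d+2}_\mu=-\frac{\partial_r g^d_\mu}{2\pi|x|}$ and the positivity of the coefficients $a^k_l$ in \eqref{eq:formulaalld}. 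Your route is more uniform (one argument for all $d\geq 2$, with $d=1$ done by hand), at the price of invoking the half-integer-order Bessel asymptotics; the paper's route for odd $d$ is more elementary but requires tracking the sign pattern of the derivatives of the one-dimensional kernel. Two harmless remarks: your choice $\sqrt{\tau}\leq\Im\sqrt{\mu}$ is slightly more flexible than the paper's exact choice $\sqrt{\tau}=\rho^{1/2}\sin(\theta/2)$, and in dimension $2$ the subleading constant $-\frac{1}{2\pi}\log\sqrt{\mu}$ in the expansion at the origin does depend on $\mu$, but this does not affect the limit of the ratio since the logarithmic leading term diverges.
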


We separated the proof of Lemma~\ref{lem:estimateonfundamentalsol} into two proofs depending on the oddness of $d$.

\begin{proof}[Proof for odd $d$]
We have $\sqrt{\mu}=\rho^{\frac{1}{2}}e^{i\frac{\theta}{2}}$. Choose $\tau>0$ such that $\sqrt{\tau}=\rho^{\frac{1}{2}}\sin\frac{\theta}{2}$. It is well-known that $g^1_\mu(x)=\frac{i}{2\sqrt{\mu}}e^{i\sqrt{\mu}|x|}$. It follows from easy computations that
\[
 \left|g^1_\mu(x)\right|\leq \frac{1}{2\sqrt{\rho}}e^{-\rho^{\frac{1}{2}}\sin{\frac{\theta}{2}}|x|}.
\]
Since 
\[
g^1_{-\tau}(x)=\frac{1}{2 \sqrt{\rho}\sin{\frac{\theta}{2}}}e^{-\rho^{\frac{1}{2}}\sin{\frac{\theta}{2}}|x|}
\]
this readily implies that for all $x\in\Rd$ we have
\[
|g^1_\mu(x)|\leq Cg^1_{-\tau}(x),
\]
which proves the lemma for $d=1$. 

Similar calculations lead to
\begin{equation}\label{eq:estimatederivativefundsolodd}
|(g^1_\mu)^{(k)}|\leq C(-1)^k(g^1_{-\tau})^{(k)}\text{ for all }k\in\N. 
\end{equation}
Assume now that $d\geq 3$ and take $l\in\N\setminus\{0\}$ such that $d=1+2l$.
Combining \eqref{eq:formulaalld} and \eqref{eq:estimatederivativefundsolodd}  gives
\[
|g_{\mu}^{1+2l}(x)|\leq Cg_{-\tau}^{1+2l}(x)\mbox{ for all }x\in\Rd\setminus\{0\},
\]
which is the desired conclusion.
\end{proof}

\begin{proof}[Proof for even $d$]
Let $\nu\in\N$ and $z\in\C$. We have the following asymptotic expansions on the Hankel functions (see \cite{AbSt64}).
\begin{alignat*}{2}
iH_0^1(z)&\approx  -\frac{2}{\pi}\ln(z)&\quad&\mbox{ for }|z|\mbox{ close to }0,\\
iH_\nu^1(z)&\approx \frac{\nu!z^{-\nu}}{2^{-\nu}\pi}&&\mbox{ for }|z|\mbox{ close to }0,\nu\neq0, \\
H_\nu^1(z)&\approx \sqrt{\frac{2}{\pi z}}e^{i(z-\frac{\nu\pi}{2}-\frac{\pi}{4})}&&\mbox{ for }|z|\mbox{ close to }+\infty.
\end{alignat*}
Therefore, we can infer the following estimates on the fundamental solutions. Recall that $d=2+2\nu$ and $\mu=\rho e^{i\theta}$. 
\begin{alignat}{2}
|g^2_\mu(x)|&\leq C|\ln(\rho^{\frac{1}{2}}|x|)|&\quad&\mbox{ for }|x|\mbox{ close to }0,\label{eq:estimatefundsol1}\\
|g^d_\mu(x)|&\leq C|x|^{-\nu}&&\mbox{ for }|x|\mbox{ close to }0,\nu\neq0, \\
|g^d_\mu(x)|&\leq C|x|^{-(\nu+1)}e^{-\rho^{\frac{1}{2}}\sin(\frac{\theta}{2})|x|}&&\mbox{ for }|x|\mbox{ close to }+\infty.
\end{alignat}
For $\tau>0$, the function $g^d_{-\tau}$ verifies $g^d_{-\tau}>0$ and
\begin{alignat}{2}
g^2_{-\tau}(x)&\approx  C|\ln(\tau^{\frac{1}{2}}|x|)|&\quad&\mbox{ for }|x|\mbox{ close to }0,\\
g^d_{-\tau}(x)&\approx C|x|^{-\nu}&&\mbox{ for }|x|\mbox{ close to }0,\nu\neq0 \\
g^d_{-\tau}(x)&\approx C|x|^{-(\nu+1)}e^{-\tau^{\frac{1}{2}}|x|}&&\mbox{ for }|x|\mbox{ close to }+\infty\label{eq:estimatefundsol6}.
\end{alignat}
Choose $\tau>0$ such that $\tau^{\frac{1}{2}}=\sqrt{\rho}\sin{\frac{\theta}{2}}$. Then we infer from \eqref{eq:estimatefundsol1}-\eqref{eq:estimatefundsol6} and the continuity of fundamental solutions that there exists $C>0$ such that
\[
|g^d_\mu(x)|\leq Cg^d_{-\tau}(x)\mbox{ for all }x\in\Rd\setminus\{0\},
\]
which finishes the proof.
\end{proof}

\subsection{Exponential decay}

We start with a regularity result on eigenfunctions.

\begin{lem}\label{lem:regularityeigenfunctions}
Assume that (VW1) is satisfied.
Take $\lambda\in\C\setminus\{iy,y\in\R, |y|\geq\w\}$, 
$u,v\in H^2(\Rd,\C)$ and assume that for 
$
U:=\begin{pmatrix}
   u\\v
  \end{pmatrix}
$
we have $LU=\lambda U$. Then $u,v\in W^{2,r}(\Rd)$ for any $r\in[2,q]$. In particular, $u,v\in\mathcal{C}^0(\Rd)$ and $\lim_{|x|\to +\infty}u(x)=\lim_{|x|\to +\infty}v(x)=0$.
\end{lem}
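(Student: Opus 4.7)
The plan is to decouple the eigenvalue equation $LU = \lambda U$ into two scalar elliptic equations for $u$ and $v$, then bootstrap integrability via $L^p$-theory for the Helmholtz operator $-\Delta + \omega$.

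Writing $LU = \lambda U$ componentwise and rearranging, I would obtain
\[
(-\Delta + \omega)u = V_2 u + (W_2 - \lambda) v, \qquad (-\Delta + \omega) v = -V_1 v + (\lambda - W_1) u.
\]
Let $T := -\Delta + \omega$. Since $\omega > 0$, $T$ is invertible and $T^{-1}: L^r(\Rd) \to W^{2,r}(\Rd)$ is bounded for every $1 < r < \infty$ by Calderón--Zygmund / Bessel potential theory. Note this decoupling does not use the assumption on $\lambda$; only the $H^2$ assumption is needed here, and the hypothesis on $\lambda$ will be used later in Proposition~\ref{thm:decayofeigenfunction}.

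Next I would run a standard bootstrap. Assuming $u,v \in L^p(\Rd)$ for some $p \ge 2$, Hölder's inequality together with $V_k, W_k \in L^q$ (assumption (VW1)) gives
\[
V_k u, \ W_k u, \ V_k v, \ W_k v \in L^s(\Rd), \quad \frac{1}{s} = \frac{1}{p} + \frac{1}{q},
\]
while the constant--coefficient terms $\lambda u, \lambda v$ lie in $L^p \subset L^s$. Hence the right-hand sides of the decoupled equations are in $L^s$, so $u, v \in W^{2,s}(\Rd)$, and Sobolev embedding gives $u,v \in L^{p'}$ with either $\frac{1}{p'} = \frac{1}{s} - \frac{2}{d}$ (if $s < d/2$) or any $p' < \infty$ otherwise. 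Since $q > d/2$, the gap $\frac{1}{p'} - \frac{1}{p} = \frac{1}{q} - \frac{2}{d} < 0$ is strictly negative at each step; after finitely many iterations the condition $s \ge d/2$ is reached, whereupon one further application of $T^{-1}$ and the embedding $W^{2,s} \hookrightarrow L^\infty$ (valid once $2s > d$) yield $u, v \in L^\infty(\Rd)$.

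Once $u, v \in L^2 \cap L^\infty$, for every $r \in [2,q]$ I estimate $V_k u, W_k v$ etc. in $L^r$ by Hölder (writing $V_k u \in L^r$ with $V_k \in L^q$ and $u \in L^{qr/(q-r)} \subset L^2\cap L^\infty$, which is consistent since $r \in [2,q]$), so the right-hand sides lie in $L^r$, and $T^{-1}:L^r \to W^{2,r}$ gives $u,v \in W^{2,r}(\Rd)$. Finally, since $q > d/2$, Morrey's embedding gives $W^{2,q}(\Rd) \hookrightarrow C^{0,\alpha}_b(\Rd)$ with uniform Hölder continuity, and combined with $u,v \in L^q(\Rd)$ a disjoint-ball argument (if $|u(x_n)| \ge \varepsilon$ along $|x_n|\to\infty$, uniform Hölder continuity forces $|u| \gtrsim \varepsilon$ on a fixed-radius ball around each $x_n$, eventually yielding $\int |u|^q = \infty$) gives $\lim_{|x|\to\infty} u(x) = \lim_{|x|\to\infty} v(x) = 0$.

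The main obstacle is organising the bootstrap so that the strict gain $\frac{1}{q} - \frac{2}{d} < 0$ is used systematically; this relies crucially on the hypothesis $q > \max(2, d/2)$ in (VW1). A minor subtlety is the final pointwise decay, which is not purely algebraic but follows from uniform continuity plus $L^q$-integrability.
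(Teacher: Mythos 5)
Your proof is correct and follows essentially the same route as the paper: the same bootstrap on integrability exponents driven by H\"older with $V_k,W_k\in L^q$, Sobolev embedding, and the strict gain $\frac{1}{q}-\frac{2}{d}<0$ from (VW1), ending with $u,v\in L^\infty$ and then $W^{2,r}$ for all $r\in[2,q]$. One small slip: the inclusion $L^p(\Rd)\subset L^s(\Rd)$ for $s<p$ is false on a set of infinite measure, but the needed membership $\lambda u,\lambda v\in L^s$ still holds because $u,v\in L^2\cap L^p$ with $2\le s\le p$ at every stage of the bootstrap, so this is a one-line repair.
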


\begin{proof}
The result follows from a classical bootstrap argument. 
Let the sequence $(r_n)$ be defined by
\[
 \left\{
\begin{array}{rcl}
r_0&=&2,\\
\frac{1}{r_{j+1}}&=&\frac{1}{q}+\frac{d-2r_j}{dr_j},
\end{array}
\right.
\]
where $q$ is given by (VW1). An elementary analysis of $(r_j)$ shows that there exists $j_0$ such that for all $0\leq j< j_0$ we have $r_{j+1}>r_j$, $\frac{d-2r_j}{dr_j}>0$ and $\frac{d-2r_{j_0}}{dr_{j_0}}<0$.

By induction, it is easy to see that for all $j=0,...,j_0$ we have $u,v\in W^{2,r_{j}}(\Rd)$. For $j=0$ it is by definition of $u,v$. Take any $0\leq j<j_0$ and assume that $u,v\in W^{2,r_{j}}(\Rd)$ . 
Since $\frac{d-2r_j}{dr_j}>0$, by Sobolev embeddings we infer that $u,v\in L^{\frac{dr_j}{d-2r_j}}(\Rd)$. Then, (VW1) and Hölder inequality imply
\[
W_1u,V_1v,V_2u,W_1v\in L^{r_{j+1}}(\Rd).
\]
Combined with $U=(u,v)^T$ satisfying $LU=\lambda U$, this leads to $u,v\in W^{2,r_{j+1}}(\Rd)$. 

In particular, we have $u,v\in W^{2,r_{j_0}}(\Rd)$. Since $\frac{d-2r_{j_0}}{dr_{j_0}}<0$, from Sobolev embeddings we infer $u,v\in L^{\infty}(\Rd)$. Then by (VW1) we get
\[
W_1u,V_1v,V_2u,W_1v\in L^{q}(\Rd).
\]
As before, combined with $LU=\lambda U$, this leads to $u,v\in W^{2,q}(\Rd)$. The conclusion follows by interpolation.
\end{proof}

For the rest of the proof, it is easier to work with the operator
\[
L':=iPLP^{-1}=
\begin{pmatrix}
-\Delta+\w+ V'_1 &  W'_1\\
W'_2 &\Delta-\w+V'_2
\end{pmatrix},
\]
where $P=\begin{pmatrix}1&i\\1&-i\end{pmatrix}$. The potentials $V'_1,V'_2,W'_1,W'_2$ verify also (VW1)-VW2). The spectrum of $L'$ is $\Sp(L')=\Sp(iPLP^{-1})=i\Sp(L)$. Hence if $\lambda\in\C$ is an eigenvalue of $L$ with eigenvector $U$ then $\lambda':=i\lambda$ is an eigenvalue of $L'$ with eigenvector $U'=\begin{pmatrix}u'\\ v' \end{pmatrix}:=PU$.

Write $L'-\lambda' I=H+K$ where
\[
H:=
\begin{pmatrix}
-\Delta+\w-\lambda' & 0\\
0 &\Delta-\w-\lambda'
\end{pmatrix}\mbox{ and }
K:=
\begin{pmatrix}
V'_1 & W'_1\\
W'_2 &V'_2
\end{pmatrix}.
\]
Take
\[
F:=\begin{pmatrix}
    f_1\\f_2
   \end{pmatrix}
:=KU'=\begin{pmatrix}
      V'_1u+W'_1v\\W'_2u+V'_2v
     \end{pmatrix}.
\]
It is well known that we can represent $u'$ and $v'$ in the following way
\[
u'=g^d_{-\w+\lambda'}*f_1\mbox{ and }v'=-g^d_{-\w-\lambda'}*f_2.
\]

Let $\mu_1:=-\w+\lambda'$ and $\mu_2:=-\w-\lambda'$. From the assumptions on $\lambda'$ we infer that $\mu_1,\mu_2$ satisfy the hypothesis of Lemma~\ref{lem:estimateonfundamentalsol}. Let $\tau_1$, $\tau_2$ be given by Lemma~\ref{lem:estimateonfundamentalsol} and set $\tau:=\min\{\tau_1,\tau_2\}$.
Take 
\begin{gather*}
\tilde{F}:=\begin{pmatrix}
    \tilde f_1\\ \tilde f_2
   \end{pmatrix}:=
\begin{pmatrix}
    |f_1|\\ |f_2|
   \end{pmatrix},
\\
\tilde{u}:=g^d_{-\tau}*\tilde f_1\mbox{ and } \tilde{v}:=g^d_{-\tau}*\tilde f_2.
\end{gather*}

\begin{claim}\label{cl:2}
There exists $C>0$ such that
\[
|u'|\leq C \tilde u\mbox{ and }|v'|\leq C\tilde v.
\]
\end{claim}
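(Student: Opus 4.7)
My plan is to prove Claim \ref{cl:2} in two short steps. The main idea is that all the hard work has already been done in establishing the pointwise comparison $|g^d_{\mu_j}(x)| \leq C g^d_{-\tau_j}(x)$ in Lemma~\ref{lem:estimateonfundamentalsol}; what remains is merely to dominate $g^d_{-\tau_j}$ by a single ``worst'' fundamental solution $g^d_{-\tau}$ and then apply the triangle inequality to the convolution representations of $u'$ and $v'$.

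\emph{Step 1: Uniform comparison of fundamental solutions.} I would first show that for $j=1,2$, there exists $C>0$ such that
\[ g^d_{-\tau_j}(x) \leq C\, g^d_{-\tau}(x) \quad \text{for all } x \in \Rd\setminus\{0\}. \]
Indeed, $g^d_{-\tau_j}$ and $g^d_{-\tau}$ are both strictly positive with identical singular behavior at the origin (depending only on $d$, cf.\ \eqref{eq:estimatefundsol1} and the following asymptotics), and decay exponentially at infinity at rates $\sqrt{\tau_j}$ and $\sqrt{\tau}$ respectively. Since $\tau = \min\{\tau_1,\tau_2\} \leq \tau_j$, the function $g^d_{-\tau}$ decays more slowly, so the ratio $g^d_{-\tau_j}/g^d_{-\tau}$ is bounded at infinity. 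Matching behavior at the origin and continuity on $\Rd\setminus\{0\}$ yield a uniform bound. Composing with Lemma~\ref{lem:estimateonfundamentalsol} gives the key estimate
\[ |g^d_{\mu_j}(x)| \leq C\, g^d_{-\tau}(x) \quad \text{for all } x \in \Rd\setminus\{0\},\ j=1,2. \]

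\emph{Step 2: Pointwise bound on the convolutions.} Starting from the integral representations $u' = g^d_{\mu_1}*f_1$ and $v' = -g^d_{\mu_2}*f_2$, the triangle inequality gives
\[ |u'(x)| \leq \int_{\Rd} |g^d_{\mu_1}(x-y)|\,|f_1(y)|\,dy \leq C \int_{\Rd} g^d_{-\tau}(x-y)\,\tilde f_1(y)\,dy = C\,\tilde u(x), \]
and the identical argument produces $|v'(x)| \leq C\,\tilde v(x)$. Since $\tilde f_1, \tilde f_2 \in L^2(\Rd)$ (as $f_1, f_2 \in L^2(\Rd)$ by (VW1) and Lemma~\ref{lem:regularityeigenfunctions}) and $g^d_{-\tau} \in L^1_{\mathrm{loc}}(\Rd)$ with exponential decay at infinity, the convolutions are well-defined and finite almost everywhere.

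The only subtle point is the comparison of $g^d_{-\tau_j}$ with $g^d_{-\tau}$ in Step 1, but this is immediate from the asymptotic formulas \eqref{eq:estimatefundsol1}--\eqref{eq:estimatefundsol6} (and their analogues in the odd-dimensional case) derived in the proof of Lemma~\ref{lem:estimateonfundamentalsol}.
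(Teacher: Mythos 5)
Your proof is correct and follows the same route the paper intends: the paper's own proof is the one-line remark that the claim ``readily follows'' from Lemma~\ref{lem:estimateonfundamentalsol}, and you have simply filled in the two implicit steps, namely the comparison $g^d_{-\tau_j}\leq C g^d_{-\tau}$ for $\tau=\min\{\tau_1,\tau_2\}\leq\tau_j$ (which holds since the singularities at the origin match and the slower decay rate dominates at infinity) and the triangle inequality applied inside the convolution representations of $u'$ and $v'$.
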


\begin{proof}
This readily follows from Lemma~\ref{lem:estimateonfundamentalsol}.
\end{proof}

\begin{lem}\label{lem:decay}
Set $w:=\tilde u+\tilde v$. There exists $C>0$ and $\alpha>0$ such that 
\[
w(x)\leq Ce^{-\alpha|x|}\mbox{ for all }x\in\Rd.
\]
\end{lem}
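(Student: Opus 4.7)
The plan is to turn the convolution representation of $w = \tilde u + \tilde v$ into a pointwise differential inequality, then compare with an exponential barrier function via a maximum principle argument in the exterior of a large ball.

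First I would observe that since $g^d_{-\tau}$ is the fundamental solution of $-\Delta + \tau$, and $\tilde u = g^d_{-\tau} \ast \tilde f_1$, $\tilde v = g^d_{-\tau} \ast \tilde f_2$ with $\tilde f_1, \tilde f_2 \geq 0$, the function $w$ is nonnegative and satisfies
\[
(-\Delta + \tau) w = \tilde f_1 + \tilde f_2 \quad \text{in } \Rd.
\]
From the definitions of $\tilde f_1, \tilde f_2$ and Claim~\ref{cl:2}, combined with $|u|+|v| \le C(|u'|+|v'|) \le C w$ (since $U' = PU$ with $P$ invertible), I get
\[
\tilde f_1 + \tilde f_2 \leq (|V_1'|+|V_2'|+|W_1'|+|W_2'|)(|u|+|v|) \leq C\, M(x)\, w(x),
\]
where $M(x) := |V_1'(x)|+|V_2'(x)|+|W_1'(x)|+|W_2'(x)| \to 0$ as $|x|\to\infty$ by (VW2). (Here I use that $V_k', W_k'$ are continuous and vanishing at infinity; this follows from (VW1)--(VW2) and Sobolev embedding, or more directly from the original assumption.) Hence $(-\Delta + \tau) w \le C M(x)\, w$, so picking $R$ large enough that $C M(x) \le \tau/2$ for $|x|>R$,
\[
(-\Delta + \tau/2)\, w \le 0 \quad \text{on } \{|x|>R\}.
\]

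The second step is to verify $w(x) \to 0$ as $|x|\to\infty$, which is needed to apply the comparison principle on the exterior domain. Since $u,v \in \mathcal{C}^0(\Rd)$ vanish at infinity (Lemma~\ref{lem:regularityeigenfunctions}), so do $\tilde f_1, \tilde f_2$. Splitting
\[
w(x) = \int_{|y|\le R_1} g^d_{-\tau}(x-y)(\tilde f_1+\tilde f_2)(y)\,dy + \int_{|y|>R_1} g^d_{-\tau}(x-y)(\tilde f_1+\tilde f_2)(y)\,dy,
\]
the first piece tends to $0$ as $|x|\to\infty$ using $\tilde f_1+\tilde f_2 \in L^\infty(B_{R_1})$ and the exponential decay of $g^d_{-\tau}$ from Lemma~\ref{lem:estimateonfundamentalsol}, while the second piece is bounded by $\|g^d_{-\tau}\|_{L^1} \sup_{|y|>R_1}(\tilde f_1+\tilde f_2)(y)$, which can be made arbitrarily small by choosing $R_1$ large.

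Finally, I would run a standard barrier argument. Fix $\alpha \in (0, \sqrt{\tau/2})$ and set $\phi(x) := A e^{-\alpha |x|}$; for $x\ne 0$,
\[
(-\Delta + \tau/2)\phi(x) = \left( \tfrac{\tau}{2} - \alpha^2 + \tfrac{(d-1)\alpha}{|x|}\right) \phi(x) > 0.
\]
Choose $A$ so large that $\phi \ge w$ on $\{|x|=R\}$, possible since $w$ is continuous and bounded there. Set $\psi := w - \phi$; then $(-\Delta + \tau/2)\psi \le 0$ on $\{|x|>R\}$, $\psi \le 0$ on $\{|x|=R\}$, and $\psi(x)\to 0$ at infinity. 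If $\psi$ had a positive maximum at some interior point $x_0$, then $-\Delta\psi(x_0)\ge 0$ and $(\tau/2)\psi(x_0)>0$ would contradict the inequality; otherwise $\sup\psi$ is attained on the boundary or at infinity, both of which give $\sup \psi \le 0$. Hence $w(x) \le A e^{-\alpha|x|}$ on $\{|x|>R\}$, and the estimate extends to all of $\Rd$ at the cost of enlarging the constant using the boundedness of $w$ on $B_R$.

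The main obstacle in this plan is to justify cleanly the bound $|u|+|v| \le C w$ (which requires the equivalence of norms given by the invertible matrix $P$ together with Claim~\ref{cl:2}) and to carry out the maximum principle on the unbounded region $\{|x|>R\}$; the standard trick is the decay $w\to 0$ at infinity, which is itself the most delicate auxiliary fact and must be proved using Lemma~\ref{lem:estimateonfundamentalsol} together with the decay of $\tilde f_1,\tilde f_2$.
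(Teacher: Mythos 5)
Your proposal is correct and follows essentially the same route as the paper: both derive $(-\Delta+\tau)w=\tilde f_1+\tilde f_2$, use Claim~\ref{cl:2} and (VW2) to absorb the right-hand side into $\frac{\tau}{2}w$ outside a large ball, and then compare $w$ with an exponential barrier via the maximum principle. The only cosmetic difference is that you justify the comparison on the unbounded exterior domain by first proving $w\to 0$ at infinity, whereas the paper argues by contradiction on the set $\{w>\psi\}$; your extra step is harmless (and arguably makes the application of the maximum principle cleaner).
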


The proof of Lemma~\ref{lem:decay} follows closely the proof of Theorem 1.1 in \cite{DeLi07}. 

\begin{proof}
Set $f:=\tilde f_1+\tilde f_2$. We first note that $w\in \mathcal{C}^0(\Rd)$. Indeed, by definition $w$ satisfies
\begin{equation}\label{eq:Helmholtzforw}
-\Delta w+\tau w=f.
\end{equation}
Since, by (VW1) and Lemma~\ref{lem:regularityeigenfunctions}, $f\in L^q(\Rd)$ this implies $w\in W^{2,q}(\Rd)$ and in particular $w\in\mathcal{C}^0(\Rd)$.

Now, we claim that there exists $R>0$ such that for all $x\in\Rd$ verifying $|x|>R$ we have 
\begin{equation}\label{eq:(3.6)DengLi}
\frac{\tau w(x)-f(x)}{w(x)}\geq \frac{\tau}{2}.
\end{equation}
Indeed, setting $T(x):=(|V'_1|+|V'_2|+|W'_1|+|W_2'|)$, by Claim~\ref{cl:2} we have 
\[
f \leq T(x)(|u'|+|v'|)\leq CT(x)(\tilde u+\tilde v)=CT(x)w.
\]
Therefore
\[
\frac{\tau w(x)-f(x)}{w(x)}\geq \tau-CT(x).
\]
By (VW2), we can take $R$ large enough so that  $CT(x)\leq \frac{\tau}{2}$ for $|x|>R$, which proves \eqref{eq:(3.6)DengLi}.

Note that $w\geq 0$ by definition. Since $w\in\mathcal{C}^0(\Rd)\cap W^{2,q}(\Rd)$, there exists $C_R$ such that for all $x\in\Rd$ with $|x|<R$ we have
\[
0\leq w(x)\leq C_R.
\]
Define $\psi(x):=C_Re^{-\sqrt{\frac{\tau}{2}}(|x|-R)}$. It is easy to see that
\begin{gather}
-\Delta \psi+\frac{\tau}{2}\psi\geq 0\mbox{ on }\Rd\setminus\{0\},\label{eq:Helmholtzforpsi}\\
w(x)-\psi(x)\leq 0\mbox{ on }\{x\in\Rd, |x|<R\}.\nonumber
\end{gather}
Therefore we only have to prove that $w(x)\leq \psi(x)$ for $|x|>R$. We proceed by contradiction. Assume that there exists $x_0\in\Rd$ with $|x_0|>R$ such that $w(x_0)>\psi(x_0)$. Define the set
\[
\Omega:=\{x\in\Rd, w(x)>\psi(x)\}.
\]
Then $\Omega$ is a non-empty open set, for all $x\in\Omega$ we have $|x|>R$ and for all $x\in\partial\Omega$ we have $w(x)-\psi(x)=0$. On $\Omega$, by \eqref{eq:Helmholtzforw}, \eqref{eq:(3.6)DengLi} and \eqref{eq:Helmholtzforpsi} we have
\begin{align*}
\Delta(w-\psi)&=\Delta w-\Delta\psi=\tau w-f-\Delta\psi\\
&=\frac{\tau w-f}{w}w-\Delta\psi\geq \frac{\tau}{2}(w-\psi)>0.
\end{align*}
By the maximum principle, this implies that $w-\psi\leq0$ on $\Omega$, a contradiction. Thus, for all $x\in\Rd$ we have
\[
w(x)\leq \psi(x)=C_Re^{-\sqrt{\frac{\tau}{2}}(|x|-R)}=C_Re^{\sqrt{\frac{\tau}{2}}R}e^{-\sqrt{\frac{\tau}{2}}|x|}=Ce^{-\sqrt{\frac{\tau}{2}}|x|}.
\]
This ends the proof.
\end{proof}

\begin{proof}[Proof of Proposition~\ref{thm:decayofeigenfunction}]
The statement is an immediate consequence of Lemma~\ref{lem:regularityeigenfunctions}, Claim~\ref{cl:2} and Lemma~\ref{lem:decay}.
\end{proof}

\subsection{Higher regularity and decay}

Upon assuming more regularity and decay, we can obtain more regularity and decay on the solutions to $(L-\lambda I)=A$.

 The new assumption is the following.
\begin{itemize}
\item[(VW3)] $V_1,V_2,W_1,W_2\in\q H(\C)$.
\end{itemize}
Recall that $\q H$ was defined in \eqref{eq:defspaceqH}.
\begin{prop}\label{prop:decayreg}
Assume that (VW1)-(VW3) hold.
\begin{itemize}
\item[(i)]
 Let $\lambda,u$ and $v$ be as in Proposition~\ref{thm:decayofeigenfunction}. Then $u,v\in \q H(\C)$. 
\item[(ii)]
Let $\lambda\notin \Sp(L)$ and take  $A\in \q H(\C^2)$. Then there exists $X\in \q H(\C^2)$ such that $(L-\lambda\Id)X=A $.
\end{itemize}
\end{prop}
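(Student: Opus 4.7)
The plan is to handle the two parts in parallel. Smoothness in both cases will follow from a standard elliptic Sobolev bootstrap, using that $V_i, W_i \in \q H(\C) \subset H^\infty(\Rd)$ by (VW3) and, for (ii), that $A \in \q H(\C^2)$: writing $\Delta u$ and $\Delta v$ (and, for (ii), $\Delta X$) in terms of the unknowns, the smooth potentials, and $A$, each step gains two Sobolev derivatives of regularity, so that eventually $u, v, X \in H^\infty(\Rd)$.

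For the exponential decay in part (i), I would argue by induction on the order $|\beta|$ of differentiation. The pair $\partial^\beta U$ satisfies $(L - \lambda I)\partial^\beta U = R_\beta$, where the commutator $R_\beta = -[\partial^\beta, L]U$ is a sum of terms of the form $(\partial^{\gamma} V_i)(\partial^{\beta'}U)$ or $(\partial^{\gamma} W_i)(\partial^{\beta'}U)$ with $|\gamma|+|\beta'|=|\beta|$ and $|\gamma|\ge 1$. By (VW3) the $\partial^{\gamma} V_i$ and $\partial^{\gamma} W_i$ decay at rate $\alpha$, and by the inductive hypothesis each $\partial^{\beta'} U$ does as well (the base case $|\beta|=0$ is Proposition~\ref{thm:decayofeigenfunction}), so $R_\beta$ decays exponentially. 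Passing to $L' = iPLP^{-1}$ exactly as in the proof of Proposition~\ref{thm:decayofeigenfunction} yields a representation of $\partial^\beta U$ as a convolution against the Helmholtz kernels $g^d_{-\omega \pm \lambda'}$ applied to an $L^1$ source of the same type as in Section A.2 plus $R_\beta$; Lemma~\ref{lem:estimateonfundamentalsol} and the barrier argument of Lemma~\ref{lem:decay} then give the desired exponential decay at rate $\alpha$.

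For part (ii), existence of $X \in H^2(\Rd, \C^2)$ is immediate since $\lambda \notin \Sp(L)$, and smoothness follows from the bootstrap above. The main new step is exponential decay of $X$, and here I would mimic the proof of Proposition~\ref{thm:decayofeigenfunction} with the extra inhomogeneity $A$. Setting $X' = PX$ and $\lambda' = i\lambda$, the diagonalized system yields the inhomogeneous Lippmann–Schwinger representation
\[
u' = g^d_{-\omega+\lambda'} \ast (f_1 + a_1), \qquad v' = -\, g^d_{-\omega-\lambda'} \ast (f_2 + a_2),
\]
where $f_1, f_2$ are the potential couplings from Section A.2 and $(a_1,a_2)^T$ are the components of $iPA$. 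Lemma~\ref{lem:estimateonfundamentalsol} dominates $|u'|, |v'|$ by non-negative convolutions $\tilde u, \tilde v$ with the positive kernel $g^d_{-\tau}$, and the maximum-principle argument of Lemma~\ref{lem:decay} can be reused on $w = \tilde u + \tilde v$: outside a large ball the ratio $(\tau w - \tilde F)/w$ is still bounded below by $\tau/2$, because the potential contribution is small by (VW2) while the extra source from $|A|$ decays exponentially, so that a suitable exponential $\psi$ continues to dominate $w$. Decay of all derivatives of $X$ is then obtained by exactly the same inductive argument as in part (i).

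The step I expect to require the most care is to ensure that the exponential rate produced by these arguments is at least the fixed $\alpha$ in the definition of $\q H$. In (i) this is essentially automatic, since (VW3) supplies rate $\alpha$ for the potentials and Proposition~\ref{thm:decayofeigenfunction} supplies rate $\alpha$ for $U$, so $R_\beta$ decays at rate $\alpha$ and the induction closes. In (ii) the rate output by the barrier is controlled by the distance from $\lambda$ to $\Sp(L)$ through $\tau$ of Lemma~\ref{lem:estimateonfundamentalsol}, together with the decay of $A$ and of the potentials; since the paper has already allowed $\alpha$ to be chosen small ($\alpha \in (0,\sqrt{\w})$ in Section~\ref{sec:profile}), one can simply shrink $\alpha$ once more if needed so that a single rate works uniformly for all the $(L,\lambda,A)$ considered in the main text.
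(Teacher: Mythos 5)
Your proposal is correct and follows the same core strategy as the paper's (very terse) proof: diagonalize with $P$, represent the solution as a convolution of the Helmholtz kernels $g^d_{-\w\pm\lambda'}$ against the source, and propagate the decay to all derivatives. Two points of comparison. First, for the derivatives you differentiate the equation and track the commutator $[\partial^\beta,L]$, whereas the paper differentiates the convolution identity itself ($D^a u'=g^d_{-\w+\lambda'}*D^a f_1$, etc.); these are the same computation packaged differently, and your inductive formulation is if anything more explicit. Second, and more substantively, in part (ii) you re-run the maximum-principle barrier of Lemma~\ref{lem:decay} with the extra inhomogeneity $A$, while the paper exploits (VW3) directly: since the potentials themselves now decay at rate $\alpha$ and $X'$ is bounded (being in $H^\infty(\Rd)$), the full source $KX'+A'$ already decays at rate $\alpha$ with no self-improvement needed, and Lemma~\ref{lem:estimateonfundamentalsol} finishes immediately --- no comparison argument at all. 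This shortcut matters because your barrier step is slightly off as written: the inequality \eqref{eq:(3.6)DengLi} rests on the pointwise domination $f\le CT(x)w$ with $T\to 0$, and the extra term $|a_1|+|a_2|$ coming from $A$ is not obviously dominated by a small multiple of $w$, so the ratio formulation $\frac{\tau w-f}{w}\ge\frac{\tau}{2}$ does not carry over verbatim. One must instead absorb the $A$-contribution into the supersolution, e.g. take $\psi$ with $-\Delta\psi+\frac{\tau}{2}\psi\ge C(|a_1|+|a_2|)$, which works for $\psi=Ce^{-\alpha|x|}$ once $\alpha^2<\tau/2$ and $C$ is large. With that adjustment (or, more simply, the paper's direct argument) your proof closes; your final remark about shrinking the single rate $\alpha$ so that it sits below all the relevant $\sqrt{\tau}$'s is exactly the convention the paper adopts.
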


\begin{proof}
(i) The assertion follows from similar arguments to those used in the proof of Proposition~\ref{thm:decayofeigenfunction}, provided we remark that (using the same notations)
$D^a u'=g^d_{-\w+\lambda'}*D^af_1$, $D^av'=-g^d_{-\w-\lambda'}*D^af_2$ and $D^af_1, D^af_2$ satisfy the same properties as $f_1$ and $f_2$.

(ii) Since $\lambda\notin\Sp(L)$ the operator $L-\lambda\Id$ is invertible, hence the existence of $X\in H^2(\Rd,\C^2)$ such that $(L-\lambda\Id)X=A $. Regularity of $X$ follows from a standard bootstrap argument as explained in the proof of Proposition~\ref{thm:decayofeigenfunction} (ii). We now recall that $L=-iP^{-1}L'P$.
Hence, if we define $X'=PX$, $\lambda'=i\lambda$, and $A'=iPA$ then 
\[
 (L'-\lambda'\Id)X'=A'.
\]
Recall that $L'-\lambda'I=H+K$. Set $Y=\begin{pmatrix}y_1\\y_2\end{pmatrix}:=KX'$ and $A'=\begin{pmatrix}a_1\\a_2\end{pmatrix}$.
Then we can represent $X'=\begin{pmatrix}x_1\\x_2\end{pmatrix}$ in the following way
\[
x_1=g^d_{-\w+\lambda'}*(y_1+a_1)\mbox{ and }x_2=-g^d_{-\w-\lambda'}*(y_2+a_2).
\]
The terms $g^d_{-\w+\lambda'}*a_1$ and $g^d_{-\w-\lambda'}*a_2$ are clearly exponentially decaying, with decay rate $\alpha$. Since $V_1,V_2,W_1,W_2\in\q H(\C^2)$, it follows that each component of $Y$ is also exponentially decaying with rate $\alpha$. Hence $g^d_{-\w+\lambda'}*y_1$ and $g^d_{-\w-\lambda'}*y_2$ are exponentially decaying with decay rate $\alpha$. The decay rate of the derivatives of $X'$ is follows immediately if we remark that for any multiindex $a$ we have
$D^a x_k=g^d_{-\w+\lambda'}*D^a(y_k+a_k)$ for $k=1,2$. 
\end{proof}

\section{Instability of solitons and multi-solitons}\label{ap:cor}

Since \eqref{eq:nls} is Galilean invariant, we can assume in this section without loss of generality that $v_1=x_1=\gamma_1=0$. Hence $R_1(t,x)=e^{i\w_1 t}\Phi_1(x)$.

Recall that, as defined in Section \ref{sec:profile}, $Y(t)$ is of the form $e^{-\rho t} (\cos(\theta t) Y_1(x) + \sin (\theta t)
  Y_2(x))$, where $Y_1, Y_2$ are smooth, exponentially decaying functions,
  along with their derivatives. 
Notice that if $u(t,x)$ is a solution to \eqref{eq:nls} and $T,\vartheta \in \m R$, then so is $\bar u (T-t,x)e^{i \vartheta}$. The hypotheses of Theorem \ref{thm:2} are verified by $\Phi_1$ and therefore also by $\bar\Phi_1$. Hence the conclusion of Theorem \ref{thm:2} holds for $\tilde R_1(t,x):=\bar{R}_1(-t,x)=e^{i\w_1 t}\bar\Phi_1$.
Let ${\mathfrak{u}} \in \q C([T_0, \infty), H^1(\m R^d)$ be the solution constructed
  in Theorem \ref{thm:2} associated with the soliton $\tilde R_1(t,x)$
and correction $e^{-\rho t} (\cos ( \theta t) Y_1(x) + \sin (\theta t) Y_2(x)) + O(e^{-2 \rho t})$ (i.e. ${\mathfrak{u}}=u_1$ in the notations of Theorem \ref{thm:2}). In particular, for all $\sigma \ge 0$,
\[
 \forall t \ge T_0, \quad \| {\mathfrak{u}}(t) - \tilde R_1(t) - Y(t) \|_{H^\sigma(\m R^d)} \le C e^{-2\rho t}. 
\]
Note that we construct ${\mathfrak{u}}$ on $\tilde R_1$ and not $R_1$ so as to have instability forward in time.

\subsection{Orbital instability of one soliton}

First let us prove a modulation lemma.
\begin{lem} \label{varpi:loc}
There exist $\e>0$, $t_0 \ge T_0$ and $M \ge 0$ such that
\[ \displaystyle \inf_{y\in\Rd, \vartheta\in\R} \| {\mathfrak{u}} (t_0) - \bar\Phi_1(x-y) e^{i \vartheta} \|_{L^2(B(0,M))} = \e >0. \] 
\end{lem}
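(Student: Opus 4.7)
The plan is to argue by contradiction. Suppose the lemma fails: then for every $t_0 \ge T_0$ and every $M$, $\inf_{y,\vartheta} \|\mathfrak{u}(t_0) - \bar\Phi_1(\cdot-y) e^{i\vartheta}\|_{L^2(B(0,M))} = 0$. Fix $t_0 \ge T_0$ large and, by a diagonal extraction against sequences $M_k \to +\infty$ and $\e_k \to 0$, obtain $(y_k) \subset \Rd$ and $(\vartheta_k) \subset \R$ with
\[ \|\mathfrak{u}(t_0) - \bar\Phi_1(\cdot - y_k) e^{i\vartheta_k}\|_{L^2(B(0,M_k))} \le \e_k. \]

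The first step is to show that $(y_k)$ is bounded. Taking $t_0$ sufficiently large, we have $\|\mathfrak{u}(t_0)\|_{L^2(B(0,M))} \ge \tfrac{1}{2}\|\Phi_1\|_{L^2}>0$ for all $M$ large enough (since by Theorem~\ref{thm:2}, $\mathfrak{u}(t_0)$ is $O(e^{-\rho t_0})$-close to $e^{i\w_1 t_0}\bar\Phi_1$ in $H^1$, and $\Phi_1$ is exponentially localized). If $|y_k| \to +\infty$ along a subsequence, then the exponential decay of $\Phi_1$ gives $\|\bar\Phi_1(\cdot-y_k)\|_{L^2(B(0,M))} \to 0$ for any fixed $M$, contradicting the triangle inequality. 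Hence $(y_k)$ is bounded; extract $y_k \to y^*$ and $e^{i\vartheta_k} \to e^{i\vartheta^*}$. Comparing $L^2(\Rd)$-norms, using that $\|\bar\Phi_1(\cdot - y_k)\|_{L^2(B(0,M_k)^c)} \to 0$ whenever $(y_k)$ is bounded and $M_k \to \infty$, we conclude
\[ \mathfrak{u}(t_0) = \bar\Phi_1(\cdot - y^*) e^{i\vartheta^*} \quad \text{in } L^2(\Rd). \]

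Next I would invoke the rigidity of the \eqref{eq:nls} flow. Because $f(\bar z) = \overline{f(z)}$, the profile $\bar\Phi_1$ solves \eqref{eq:snls} with frequency $\w_1$, so the datum $\bar\Phi_1(\cdot-y^*)e^{i\vartheta^*}$ propagates into the exact soliton $e^{i\w_1 t}\bar\Phi_1(\cdot-y^*)e^{i(\vartheta^* - \w_1 t_0)}$. Combining with the conclusion of Theorem~\ref{thm:2} for $\tilde R_1(t) = e^{i\w_1 t}\bar\Phi_1$, for all $t \ge T_0$,
\[ e^{i\w_1 t}\bigl[\bar\Phi_1(\cdot-y^*)e^{i(\vartheta^*-\w_1 t_0)} - \bar\Phi_1\bigr] = Y(t) + O_{H^1}(e^{-2\rho t}). \]
The $H^1$-norm of the left-hand side is independent of $t$, whereas the right-hand side tends to $0$; therefore $\bar\Phi_1(\cdot-y^*)e^{i(\vartheta^*-\w_1 t_0)} = \bar\Phi_1$. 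Rigidity of the non-trivial exponentially decaying bound state $\Phi_1$ then forces $y^* = 0$ and $\vartheta^* - \w_1 t_0 \in 2\pi\m Z$, so that $\mathfrak{u}(t) \equiv \tilde R_1(t)$ on $[T_0,+\infty)$. This yields $Y(t) = O_{H^1}(e^{-2\rho t})$, in direct contradiction with the property (stated in Theorem~\ref{thm:2}) that $e^{\rho t}\|Y(t)\|_{H^1}$ is periodic and bounded away from $0$.

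The main obstacle I expect to encounter is the last rigidity step $\bar\Phi_1(\cdot - y^*)e^{i\alpha} = \bar\Phi_1 \Rightarrow y^* = 0,\ \alpha \in 2\pi\m Z$. For the standard excited states considered in the paper (real radial profiles, or vortices of the form $\varphi(|x|) e^{im\theta}$) this follows by inspection of the modulus $|\Phi_1|$: the set where $|\Phi_1|$ is maximized determines $y^*$, and $\alpha$ is then read off from any point where $\Phi_1 \ne 0$. In more exotic situations one would have to run a modulation argument against the whole orbit $\{\bar\Phi_1(\cdot - y)e^{i\vartheta} : y\in\Rd,\vartheta\in\R\}$ rather than a single representative, but within the framework (A1)--(A4), where bound states are exponentially decaying and have well-understood symmetry groups, the direct argument is sufficient.
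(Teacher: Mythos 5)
Your proof is correct, but it replaces the paper's key non-degeneracy step by a genuinely different argument. Both proofs reduce, via a compactness/extraction argument in $y$ and $\vartheta$ (which you organize slightly differently, working directly on growing balls rather than first on all of $\Rd$ and then localizing), to excluding the possibility that ${\mathfrak{u}}(t_0)$ lies \emph{exactly} on the orbit $\{\bar\Phi_1(\cdot-y)e^{i\vartheta}\}$. The paper excludes this statically, at the fixed time $t_0$: it first shows that the hypothetical modulation parameters $(y_0,\vartheta_0)$ are $O(e^{-\rho t_0})$-small, and then observes that ${\mathfrak{u}}(t_0)-\bar\Phi_1$ would have to lie (to first order) in the tangent space $T_{\bar\Phi_1}\q F\subset\ker L_{\m C}$, whereas its leading term $e^{-\rho t_0}(\cos(\theta t_0)Y_1+\sin(\theta t_0)Y_2)$ is built on an unstable eigenmode and hence is not in $\ker L_{\m C}$ (this forces the paper to shift $t_0$ along $2\pi\m Z/\theta$ to guarantee the eigenmode component is nonzero at that instant). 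You instead argue dynamically: if ${\mathfrak{u}}(t_0)=\bar\Phi_1(\cdot-y^*)e^{i\vartheta^*}$, then since $\bar\Phi_1$ solves \eqref{eq:snls} and the Cauchy problem is well-posed in $\hu$ under (A1)--(A2), ${\mathfrak{u}}(t)$ is an exact soliton for $t\ge t_0$; comparing with the conclusion of Theorem~\ref{thm:2} and letting $t\to+\infty$ kills the constant-norm left-hand side and yields $\|Y(t)\|_{H^1}\le Ce^{-2\rho t}$, so that the periodic function $e^{\rho t}\|Y(t)\|_{H^1}$ tends to $0$ and hence vanishes identically, contradicting Theorem~\ref{thm:2}. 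This is arguably cleaner: it needs no estimate on the size of $(y^*,\vartheta^*)$, no discussion of the tangent space or of the kernel of $L_{\m C}$, and no adjustment of $t_0$ modulo the period.

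One remark: the ``main obstacle'' you flag at the end is not actually needed. Once you have $\bar\Phi_1(\cdot-y^*)e^{i(\vartheta^*-\w_1 t_0)}=\bar\Phi_1$ as an identity of $L^2$ functions, you may conclude ${\mathfrak{u}}(t)\equiv \tilde R_1(t)$ and hence $Y\equiv 0$ directly, without ever identifying $y^*=0$ or pinning down $\vartheta^*$; the symmetry group of $\Phi_1$ plays no role. So the proof closes within (A1)--(A4) with no extra structural assumption on the excited state.
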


\begin{proof} Let $t_0>T_0$ to be determined later. Up to increasing $t_0$, we can assume that $\w_1t_0\equiv 0(2\pi)$.

Consider $\Theta(y,\vartheta) = \| {\mathfrak{u}}(t_0) - \bar\Phi_1(x-y) e^{i \vartheta} \|_{L^2(\m R^d)}$. The function $\Theta$ is continuous on $\m R^{d+1}$. Notice that for $\vartheta = 0$ and $y = 0$, one gets $\Theta (0,0) \le C e^{-\rho t_0}$.

Now, we have that $ \liminf_{|y| \to \infty} \inf_{\vartheta\in\R} \Theta (y,\vartheta) \ge 2
\| \bar\Phi_1 \|_{L^2(\m R^d)} - C e^{-\rho t_0}$ due to space localization of
$\bar\Phi_1$, so that, as $\vartheta \in \m R/2\pi\m Z$ compact, if $t_0$ is large
enough, $\inf_{y\in\Rd,\vartheta\in\R} \Theta (y,\vartheta)$ is attained at some point $(y_0, \vartheta_0)$.

Assume $\Theta (y_0,\vartheta_0)=0$, i.e. ${\mathfrak{u}}(t_0) = \bar\Phi_1(x-y_0) e^{i\vartheta_0}$. 

\noindent\emph{Claim:} There exists a continuous function $\eta$ such that $\eta(0)=0$ and $|y_0| + |\vartheta_0| \le \eta( e^{-\rho t_0})$. 

Indeed, first consider $y_0$. Denote $g(y) = \| |\bar\Phi_1| - |\bar\Phi_1(\cdot -y) | \|_{L^2(\m R^d)}^2$. We have
\[ 0 = \Theta (y_0, \vartheta_0) \ge \| |{\mathfrak{u}}(t_0)| - |\bar\Phi_1( \cdot -y_0)| \|_{L^2(\m R^d)} \ge \| |\bar\Phi_1| - |\bar\Phi_1(\cdot-y_0)| \|_{L^2(\m R^d)} - C \| Y(t_0) \|_{L^2(\m R^d)}. \]
As $\| Y(t_0) \|_{L^2(\m R^d)} \le C e^{-\rho t_0}$, we get $g(y_0) \le C^2 e^{-2 \rho t_0}$. Now, due to space localization of $\bar\Phi_1$, $g(y) \to 2 \| \bar\Phi_1 \|_{L^2(\m R^d)}^2 > 0$ as $|y| \to+ \infty$. Let $(y_n)$ be such that $g(y_n) \to 0$, and $y_n \not \to 0$. Then up to a subsequence, $y_n \to y^\infty$ and $g(y^\infty)=0$, so that $|\bar\Phi_1|$ is periodic and as $\bar\Phi_1 \in L^2(\m R^d)$, $\bar\Phi_1 \equiv 0$, a contradiction. This shows that $y \to 0$ as $g(y) \to 0$, and it gives the bound on $y_0$. For $\vartheta_0$,
\[ 0 = \| {\mathfrak{u}}(t_0) - \bar\Phi_1( \cdot -y_0) \|_{L^2(\m R^d)} \ge -\| {\mathfrak{u}} (t_0) - \bar\Phi_1 \|_{L^2(\m R^d)} + \| \bar\Phi_1 - \bar\Phi_1 e^{i \vartheta_0} \|_{L^2(\m R^d)} - \| \bar\Phi_1 - \bar\Phi_1( \cdot -y_0) \|_{L^2(\m R^d)}, \]
As $\| \bar\Phi_1 - \bar\Phi_1 e^{i \vartheta_0} \|_{L^2(\m R^d)} = |1 - e^{i\vartheta_0}|\nld{\bar\Phi_1}$, we deduce that $|\vartheta_0| \le C e^{-\rho t_0} + C g(y_0)$. This concludes the proof of the claim.

Denote $T_{\bar\Phi_1} \q F$
the tangent space of $\q F = \{ \bar\Phi_1(\cdot - y) e^{i\vartheta} | (y,\vartheta)
\in \m R^d \}$ at point $\bar\Phi_1$. Note that, due to the \emph{Claim}, $\q F$ is a manifold. It is easy to see that $T_{\bar\Phi_1} \q F \subset \ker
L_{\m C}$ (by differentiating the relation $\Delta \bar\Phi_1(x-y) +
g(|\bar\Phi_1(x-y)|^2)\bar\Phi_1(x-y) = \omega_1 \bar\Phi_1(x-y)$).
But for all $t$, $(\cos(\theta t) Y_1(x) + \sin (\theta t)
Y_2(x)) \notin \ker L_{\m C}$ (as $Y_1,Y_2$ are build on an eigenvector for an eigenvalue of positive real part of $L_\C$). As ${\mathfrak{u}}(t_0)
= \bar\Phi_1 + e^{\rho t_0} (\cos(\theta t_0) Y_1(x) + \sin (\theta t_0) Y_2(x)) +
O(e^{-2\rho t_0})$, up to choosing $t_0+2k \pi/\theta$, ($k \in \m N$ large)
instead of $t_0$, this proves that ${\mathfrak{u}}(t_0) \notin \q F$. We proved
that for $t_0$ large enough,
\[ 
\inf_{y\in\Rd,\vartheta\in\R} \| {\mathfrak{u}}(t_0) - \bar\Phi_1(x-y) e^{i\vartheta} \|_{H^1(\m R^d)} > 0. 
\]
Assume that this does not hold when we restrict to $L^2(B(0,M))$, for any large $M$. This would mean that for all $m \ge 0$, there exist $y_m\in\Rd, \vartheta_m\in\R$ such that 
\[ \| {\mathfrak{u}}(t_0) - \bar\Phi_1(x-y_m) e^{i\vartheta_m} \|_{L^2(B(0,m))} \le \frac{1}{m}. \]
Then by localization arguments, $(y_m)$ remains bounded, so that up to a subsequence, $y_m \to y_\infty$, $\vartheta_m \to \vartheta_\infty$. Therefore  $\| {\mathfrak{u}}(t_0,x) - \bar\Phi_1(x-y_\infty) e^{i\vartheta_\infty} \|_{L^2(\Rd)} =0$, so that ${\mathfrak{u}}(t_0,x) = \bar\Phi_1(x-y_\infty) e^{i\vartheta_\infty}$, a contradiction.
\end{proof}

\begin{proof}[Proof of Corollary \ref{cor1}]
Let $t_0$ and $\e$ be given by Lemma \ref{varpi:loc}. Take an increasing sequence $(S_n)$ so that $S_n \to +\infty$ as $n\to+\infty$, and define $T_n := S_n - t_0$ and 
\[ 
u_n(t,x) := \bar{\mathfrak{u}} (S_n-t , x) e^{-i\omega_1 S_n}. 
\]
Then $u_n \in \q C([0,T_n], H^1(\m R^d))$ is a solution of \eqref{eq:nls}, and 
\begin{gather*}
\begin{aligned}
u_n(0,x) & =  \bar{\mathfrak{u}} (S_n, x ) e^{-i \omega_1 S_n}= \Phi_1(x ) +O_{H^\sigma} (e^{-\rho S_n}),\\
u_n (T_n,x) &=\bar {\mathfrak{u}}(t_0,x) e^{-i\omega_1 S_n}.
\end{aligned}
\end{gather*}
Therefore, 
$
\| u_n (0) - R_1(0) \|_{H^\sigma(\Rd)} \le C e^{-\rho S_n} \to 0
$
as $n\to+\infty$.
Due to Lemma \ref{varpi:loc}, we deduce that for all $n\in\N$ we have
\[ \inf_{y\in\Rd, \vartheta\in\R} \| u_n(T_n) - e^{i \vartheta } \Phi_1(\cdot-y) \|_{\ld}  \ge \inf_{y\in\Rd, \vartheta\in\R} \| {\mathfrak{u}}(t_0) - \bar\Phi_1(x-y) e^{i \vartheta } \|_{L^2(B(0,M))}  \ge \e, \]
which is the desired conclusion.
\end{proof}

\subsection{Instability of multi-solitons}

\begin{proof}[Proof of Corollary \ref{cor2}]
Let $T>0$, $M$ be given by Lemma \ref{varpi:loc} and $\e$, $(u_n)$, $(T_n)$ be given by Corollary \ref{cor1}. 

The idea is the following. We use the fact that $u_n(T_n)$ is $\e$-away from the orbit of the soliton $R_1$.
Given a parameter $I$, we consider at time $I$ an initial data $w(I)$ which is $u_n(0)$ adequately shifted, denoted by $\tilde u_n(I)$, plus the sum of the $R_j(I)$, $j \ge 2$. (All the functions will depend on $n$ and $I$, although we do not always show this dependence for convenience in the notation). We aim at controlling $w$ up to time $I+T_n$. The role of $I$ is to ensure that the interaction of $u_n$ and the $R_j$ are small: as $\{ u_n(t) | t \in [0,T_n] \}$ is compact and the $R_j(t)$ ($j\ge 2$) are localized away from $\tilde u_n(t)$, their interaction goes to $0$ as $I \to +\infty$. Using a Gronwall type argument, we are able to show that $w(I+T_n)$ is $\tilde u_n(I+T_n) + \sum_{j=2}^N R_j(I+T_n) + o_{I \to +\infty}(1)$. As $u_n(T_n)$ is $\e$-away from the soliton family, we deduce that $w(I+T_n)$ is $\e - o_{I\to +\infty}(1) \ge \e/2$ away from the family of a sum of solitons.

Given $I \ge T$, define $\tilde u \in \q C([I,I+T_n], H^1(\m R^d))$ by
\[ 
\tilde u_{n}(t,x) = u_n(t-I,x).
\]
Possibly increasing $I$ so that $\w_1I=0(2\pi)$, we have $\| \tilde u_{n}(I) - R_1(I) \|_{H^\sigma(\Rd)} = \| u_n(0) - R_1(0) \|_{H^\sigma(\Rd)} \to 0$ as $n \to +\infty$ and $\tilde u_{n}(I+T_n)$ is $\e$-away from the $\Phi_1$-soliton family. Consider the solution $w_n \in  \q C([I,T^*), H^1(\m R^d))$ to \eqref{eq:nls} with initial data at time $I$
\[ 
w_n(I,x) = \tilde u_{n}(I,x) + \sum_{j =2}^N R_j(I,x).
\]
If $T^*<+\infty$, the blow-up alternative for \eqref{eq:nls} automatically implies instability on the multi-soliton, hence we assume $T^*=+\infty$. 
Let $\sigma > d/2$ be an integer. Notice that, as $u_n \in \q C([0,T_n],H^\sigma(\Rd))$ and $[0,T_n]$ is compact, the set $\left\{ u_{n}(t) \middle| t \in [0,T_n] \right\}$ is compact in $H^\sigma(\Rd)$.
In particular, $\sup_{t \in [0,T_n]} \| u_n(t) \|_{H^\sigma(|x| \ge R)} \to 0$ as $R \to +\infty$. Hence, as the $R_j$ are decoupling as time grows, there exists a function $\eta(I)$ such that $\eta(I) \to 0$ as $I \to +\infty$ and
\[
\forall t \in [I,I+T_n], \quad \sum_{j \ge 2} \| \tilde u_n(t) R_j(t) \|_{H^\sigma} \le \eta(I).
\]
Denote $x_j(t) = v_j t+ x_j$. Up to modifying the function $\eta$, we can also assume that the $R_j(t)$, $j \ge 2$, are far away from $x_1(t)\equiv0$, and that the multisoliton $R(t)$ is near the sum of solitons $\sum_{j=1}^N R_j(t)$, that is
\[
\forall t \ge I, \quad \sum_{j=2}^N \| R_j(t) \|_{H^\sigma(B(0, 2M))} + \| R(t) - \sum_{j=1} R_j(t) \|_{H^\sigma(\Rd)} \le \eta(I).
\]
Finally we denote ${J} = {I}+T_n $ and 
\[
z(t) = w_n(t) - (\tilde u_{n}(t) + \sum_{j =2}^N R_j(t)). 
\]
Now, as $f$ is $\q C^\infty$, for all $R>0$, there exists $C(R)$ such that
\begin{equation} \label{fbound}
\forall a,b \in B(0,R), \quad |f(a+b) - f(a) - f(b) | \le C(R) |a| |b|.
\end{equation}
Indeed, this expression is symmetric in $a,b$, so that we can assume without loss of generality that $|b| \le |a|$. As $f(0) = f'(0) =0$, we have that $|f(b)| \le C |b|^2 \le C|a| |b|$, and a Taylor expansion shows that 
\[
| f(a+b) - f(a)| = b \int_0^1 |f'(a+tb)| dt \le b \sup_{x \in B(0,|a|+|b|)} |f'(x)| \le C |b| (|b| + |a|) \le C|a||b|. \]
Now, as $H^\sigma(\Rd)$ is an algebra, we deduce from \eqref{fbound} that there exists a constant $C>0$ (depending only on the $\Phi_j$) such that for $t \in [{I},{J}]$,
\[ 
\| f(z(t) + \tilde u_{n}(t) + \sum_{j =2}^N R_j(t)) - f( \tilde u_{n} ) - \sum_{j =2}^N f(R_j(t)) \|_{H^\sigma(\Rd)} \le C \| z(t) \|_{H^\sigma(\Rd)} + C \sum_{j=2}^N \| \tilde u_{n}(t) R_j(t) \|_{H^\sigma(\Rd)}. 
\]
The function $z$ satisfies the equation 
\[
i z_t + \Delta z + f \bigg( z + \tilde u_{n} + \sum_{j=2}^N R_j \bigg) - f(\tilde u_{n}) - \sum_{j=2}^N f(R_j) =0,
\]
Since $z({I})=0$, Duhamel formula for $z$ gives
\[
z(t) = \int_{{I}}^t e^{i\Delta(t-s)} \Bigg( f \bigg( z(s) + \tilde u_{n}(s) + \sum_{j=2}^N R_j(s) \bigg) - f(\tilde u_{n}(s)) - \sum_{j=2}^N f(R_j(s)) \Bigg) ds.
\]
Hence, for all $t \in [{I},{J}]$
\[ 
\| z(t) \|_{H^s(\Rd)} \le C \int_{{I}}^t (\| z(s) \|_{H^\sigma(\Rd)} + \eta) ds \le C \int_{{I}}^t \| z(s) \|_{H^\sigma(\Rd)} ds + \eta(I) (t-{I}). 
\]
By Gr\"onwall's Lemma, we deduce that for $t \in [{I}, {J}]$, we have
\[ 
\| z(t) \|_{H^\sigma(\Rd)} \le C \eta(I) (t-{I})e^{C(t-{I})}\le C_n \eta(I), 
\]
where $C_n = CT_n e^{CT_n}$. Thus for all $n \in \m N$ we have
\[
\left\| w_n({J}) - u_n(T_n)  - \sum_{j=2}^N R_j({J}) \right\|_{H^\sigma(\Rd)} \le C_n \eta(I).
\]
Now choose $I_n$ such that $C_n \eta(I_n) \le \e/3$ and set $J_n=I_n+T_n$. Then $\|z(J_n)\|_{H^\sigma(\Rd)} \le \e/3$. Then, given $y_j \in \m R^d$, $\vartheta_j \in \m R$, we have (denote $c_j=c_j(t)=-\frac{1}{4}|v_j|^2t+\w_jt+\g_0$)
\begin{align*}
\MoveEqLeft \| w_{n}(J_n) - \sum_{j=1}^N \Phi_j(\cdot - y_j) e^{i(\frac{1}{2}v_j\cdot
  x +\vartheta_j)} \|_{L^2} \\
& \ge \left\| u_n(T_n) + \sum_{j=2}^N R_j(J_n) - \sum_{j=1}^N \Phi_j(\cdot - y_j) e^{i (\frac{1}{2}v_j\cdot
  x+\vartheta_j)} \right\|_{L^2}
- \left\| w_{n}(J_n) - u_n(T_n) - \sum_{j=2}^N R_j(J_n) \right\|_{L^2}  \\
& \ge \left\| u_n(T_n) - \Phi_1 (x-y_1) e^{i \vartheta_1} + \sum_{j=2}^N (\Phi_j(x-x_j(J_n)) e^{i(\frac{1}{2}v_j\cdot
  x+c_j)} - \Phi_j(x- y_j) e^{i (\frac{1}{2}v_j\cdot
  x+\vartheta_j)}) \right\|_{L^2} - \e/3.
\end{align*}
Now consider $y_j$, $\vartheta_j$ that realize a near infimum, say $\| w_{n}(J_n) - \sum_{j=1}^N \Phi_j(\cdot - y_j) e^{i (\frac{1}{2}v_j\cdot
  x+\vartheta_j)} \|_{L^2} \le 2\e$.
Then considering the $L^2$ norm on balls $B(x_j(J_n),R)$ around each exited state $R_j$, $j \ge 2$ (for some large and fixed radius $R$), we see that,
 up to a permutation if two $\Phi_j$ or more are equal, we must have $y_j - x_j(J_n) = O(1)$ for $j \ge 2$. In particular, this implies that
\[ \left\| \sum_{j=2}^N (\Phi_j(x-x_j(J_n)) e^{i(\frac{1}{2}v_j\cdot
  x+c_j)} - \Phi_j(x- y_j) e^{i (\frac{1}{2}v_j\cdot
  x+\vartheta_j)} \right\|_{H^\sigma(B(0,M))} = o_{I_n \to +\infty}(1) \le \e/3, \]
up to increasing again $I_n$. Thus,
\begin{align*}
\inf_{\substack{ y_j\in\Rd, \vartheta_j\in\R,\\j=1,...,N}} \| w_{n}(J_n) - \sum_{j=1}^N \Phi_j(\cdot - y_j) e^{i (\frac{1}{2}v_j\cdot
  x+\vartheta_j)} \|_{L^2}
& \ge \| w_{n}(J_n) - \sum_{j=1}^N \Phi_j(\cdot - y_j) e^{i (\frac{1}{2}v_j\cdot
  x+\vartheta_j)} \|_{L^2(B(0,M))} \\
& \ge  \| u_n(T_n)  - \Phi_1 (x-y_1) e^{i \vartheta_1} \|_{L^2(B(0,M))} - 2\e/3 \\
& \ge \e - 2\e/3 \ge \e/3,
\end{align*}
where we used Corollary \ref{cor1} on the last line. As 
\[ 
\| w_n(I_n) - R(I_n) \|_{H^\sigma(\Rd)} \le \| w_n(I_n) - \sum_{j=1}^N R_j(I_n) \|_{H^\sigma(\Rd)} + \| \sum_{j=1}^N R_j (I_n) - R(I_n) \|_{H^\sigma(\Rd)} \to 0, 
\]
$w_n$, $I_n$ and $J_n$ satisfies the conditions of Corollary \ref{cor2}.
\end{proof}

\begin{rmk}
Notice that we did not use any high speed condition on the $v_j$. 
The most delicate point here is that we have no uniform spatial decay on $u_n$ (as well as on the multi-soliton constructed in Theorem \ref{thm:3}), apart that coming from $H^\sigma(\Rd)$ compactness. 
We conjecture it should be exponentially decaying (in space) around the soliton (resp. every soliton $R_j$); a proof of this should be related to uniqueness of the multi-soliton in the $L^2$ sub-critical case, which is currently an open problem.
\end{rmk}

\section{Coercivity for a soliton}\label{ap:coercivity}
This section is devoted to the proof of Lemma~\ref{lem:coercivity-H-0}.

\begin{proof}[Proof of Lemma~\ref{lem:coercivity-H-0}]
We first remark that
$ R_0$ is solution of 
\begin{equation}\label{eq:eqsolevedbyR}
-\Delta  R_0 +\left(\w_0+\frac{|v_0|^2}{4}\right)R_0-f(R_0)+iv_0\nabla R_0 =0.
\end{equation}
Therefore it is a critical point of the functional $ \tilde{S}_0$ defined for $w\in\hu$ by
\[
\tilde{S}_0(w):=\frac{1}{2}\nldd{\nabla w}+\frac{1}{2}\left(\w_0+\frac{|v_0|^2}{4}\right)\nldd{w}
-\intrd F(w)dx-\frac{1}{2}v_0\cdot\Im\intrd\bar{w}\nabla wdx.
\]
The quadratic form $H_0$ is precisely
\[
H_0(t,w)= \dual{\tilde{S}''_0(R_0)w}{w}. 
\]
Consider $z$ such that $w=e^{-i(\frac{1}{2}v_0\cdot x-\frac{1}{4}|v_0|^2t+\w_0t+\g_0)}z(x+v_0t+x_0)$. 
Then it is easy to see that
\[
H_0(t,w)=\tilde{H}_0(z):=
\nldd{\nabla z}+\w_0\nldd{z}
-\intrd
\left(
g(|\Phi_{0}|^2)|z|^2
+2g'(|\Phi_{0}|^2)\Re(\Phi_{0}\bar{z})^2\right)
dx.
\]
It is well-known that up to a finite number of non-positive directions $\tilde{H}_0(z)$ controls the $\hu$-norm of $z$. Indeed, the self-adjoint operator corresponding to the quadratic form $\tilde{H}_0$ (viewed on $H^1(\Rd,\R^2)$) is a compact perturbation of $\begin{pmatrix}-\D+\w_0&0\\0&-\D+\w_0\end{pmatrix}$, hence its spectrum lies on the real line and its essential spectrum is $[\w_0,+\infty)$. Since in addition the quadratic form $\tilde{H}_0$ is bounded from below on the unit $\ld$-sphere, the corresponding operator admits only a finite number of eigenvalues in $(-\infty,\w_0')$ for any $\w_0'<\w_0$. In particular, there exist $\tilde K_0>0$, $\nu_0\in\N$ and $\tilde X^1_0,...,\tilde X^{\nu_0}_0\in\ld$ such that $\nld{\tilde X_0^k}=1$ for any $k$ and
\[
\nhud{z}\leq \tilde K_0\tilde{H}_0(z)+\tilde K_0\sum_{k=1}^{\nu_0}\psld{z}{\tilde X^k_0}^2.
\]
Since 
\[
\nldd{\nabla w}=\frac{3}{2}\nldd{\nabla z}+\frac{3|v_0|^2}{4}\nldd{z}
\]
there exists $ K_0>0$ such that 
\[
\nhud{w}\leq  K_0 H_0(t,w)+ K_0\sum_{k=1}^{\nu_0}\psld{w}{ X^k_0(t)}^2,
\]
where $ X^k_0(t):=e^{i(\frac{1}{2}v_0\cdot x-\frac{1}{4}|v_0|^2t+\w_0t+\g_0)}\tilde X^k_0(x-v_0t-x_0)$.
\end{proof}

% \bibliographystyle{abbrv}
% \bibliography{biblio}

\begin{thebibliography}{10}

\bibitem{AbSt64}
M.~Abramowitz and I.~A. Stegun.
\newblock {\em Handbook of mathematical functions with formulas, graphs, and
  mathematical tables}, volume~55 of {\em National Bureau of Standards Applied
  Mathematics Series}.
\newblock U.S. Government Printing Office, Washington, D.C., 1964.

\bibitem{BeCa81}
H.~Berestycki and T.~Cazenave.
\newblock Instabilit\'e des \'etats stationnaires dans les \'equations de
  {S}chr\"odinger et de {K}lein-{G}ordon non lin\'eaires.
\newblock {\em C. R. Acad. Sci. Paris S\'er. I Math.}, 293(9):489--492, 1981.

\bibitem{BeGaKa83}
H.~Berestycki, T.~Gallou\"et, and O.~Kavian.
\newblock {\'Equations de champs scalaires euclidiens non lin\'eaires dans le
  plan}.
\newblock {\em C. R. Acad. Sci. Paris}, 297:307--310, 1983.

\bibitem{BeLi83-1}
H.~Berestycki and P.-L. Lions.
\newblock Nonlinear \ scalar \ field\ equations {I}.
\newblock {\em Arch. Ration. Mech. Anal.}, 82:313--346, 1983.

\bibitem{BeLi83-2}
H.~Berestycki and P.-L. Lions.
\newblock Nonlinear scalar field equations {II}.
\newblock {\em Arch. Ration. Mech. Anal.}, 82(4):347--375, 1983.

\bibitem{BeSh91}
F.~A. Berezin and M.~A. Shubin.
\newblock {\em The {S}chr\"odinger equation}, volume~66 of {\em Mathematics and
  its Applications (Soviet Series)}.
\newblock Kluwer Academic Publishers Group, Dordrecht, 1991.

\bibitem{Ca03}
T.~Cazenave.
\newblock {\em Semilinear {S}chr\"odinger equations}.
\newblock New York University -- Courant Institute, New York, 2003.

\bibitem{CaLi82}
T.~Cazenave and P.-L. Lions.
\newblock Orbital stability of standing waves for some nonlinear
  {S}chr\"odinger equations.
\newblock {\em Comm. Math. Phys.}, 85(4):549--561, 1982.

\bibitem{CaWe90}
T.~Cazenave and F.~B. Weissler.
\newblock The {C}auchy problem for the critical nonlinear {S}chr\"odinger
  equation in {$H^s$}.
\newblock {\em Nonlinear Anal.}, 14(10):807--836, 1990.

\bibitem{ChGuNaTs07}
S.-M. Chang, S.~Gustafson, K.~Nakanishi, and T.-P. Tsai.
\newblock Spectra of linearized operators for {NLS} solitary waves.
\newblock {\em SIAM J. Math. Anal.}, 39(4):1070--1111, 2007/08.

\bibitem{Co09}
V.~Combet.
\newblock Construction and characterization of solutions converging to solitons
  for supercritical g{K}d{V} equations.
\newblock {\em Differential Integral Equations}, 23(5-6):513--568, 2010.

\bibitem{Co10}
V.~Combet.
\newblock Multi-soliton solutions for the supercritical {gKdV} equations.
\newblock {\em Comm. Partial Differential Equations}, 36(3):380--419, 2011.

\bibitem{CoMaMe09}
R.~Cote, Y.~Martel, and F.~Merle.
\newblock {Construction of multi-soliton solutions for the L2-supercritical
  gKdV and NLS equations}.
\newblock {\em Revista Matematica Iberoamericana}, to appear.

\bibitem{DeLi07}
Y.~Deng and Y.~Li.
\newblock Exponential decay of the solutions for nonlinear biharmonic
  equations.
\newblock {\em Commun. Contemp. Math.}, 9(5):753--768, 2007.

\bibitem{DuMe07}
T.~Duyckaerts and F.~Merle.
\newblock Dynamics of threshold solutions for energy-critical wave equation.
\newblock {\em Int. Math. Res. Pap. IMRP}, (4):Art. ID rpn002, 67 pp. (2008),
  2007.

\bibitem{DuMe08}
T.~Duyckaerts and F.~Merle.
\newblock Dynamics of threshold solutions for energy-critical wave equation.
\newblock {\em Int. Math. Res. Pap. IMRP}, pages Art ID rpn002, 67, 2008.

\bibitem{DuMe09}
T.~Duyckaerts and F.~Merle.
\newblock Dynamic of threshold solutions for energy-critical {NLS}.
\newblock {\em Geom. Funct. Anal.}, 18(6):1787--1840, 2009.

\bibitem{DuRo}
T.~Duyckaerts and S.~Roudenko.
\newblock Threshold solutions for the focusing 3{D} cubic {S}chr\"odinger
  equation.
\newblock {\em Rev. Mat. Iberoam.}, 26(1):1--56, 2010.

\bibitem{Gr88}
M.~Grillakis.
\newblock Linearized instability for nonlinear {S}chr\"odinger and
  {K}lein-{G}ordon equations.
\newblock {\em Comm. Pure Appl. Math.}, 41(6):747--774, 1988.

\bibitem{Gr90}
M.~Grillakis.
\newblock Existence of nodal solutions of semilinear equations in {${\bf
  R}^N$}.
\newblock {\em J. Differential Equations}, 85(2):367--400, 1990.

\bibitem{GrShSt87}
M.~Grillakis, J.~Shatah, and W.~A. Strauss.
\newblock Stability theory of solitary waves in the presence of symmetry {I}.
\newblock {\em J. Funct. Anal.}, 74(1):160--197, 1987.

\bibitem{GrShSt90}
M.~Grillakis, J.~Shatah, and W.~A. Strauss.
\newblock Stability theory of solitary waves in the presence of symmetry {II}.
\newblock {\em J. Funct. Anal.}, 94(2):308--348, 1990.

\bibitem{HuLe07}
D.~Hundertmark and Y.-R. Lee.
\newblock Exponential decay of eigenfunctions and generalized eigenfunctions of
  a non-self-adjoint matrix {S}chr\"odinger operator related to {NLS}.
\newblock {\em Bull. Lond. Math. Soc.}, 39(5):709--720, 2007.

\bibitem{Jo88}
C.~K. R.~T. Jones.
\newblock An instability mechanism for radially symmetric standing waves of a
  nonlinear {S}chr\"odinger equation.
\newblock {\em J. Differential Equations}, 71(1):34--62, 1988.

\bibitem{JoKu86}
C.~K. R.~T. Jones and T.~K{\"u}pper.
\newblock On the infinitely many solutions of a semilinear elliptic equation.
\newblock {\em SIAM J. Math. Anal.}, 17(4):803--835, 1986.

\bibitem{La80}
G.~L. Lamb, Jr.
\newblock {\em Elements of soliton theory}.
\newblock John Wiley \& Sons Inc., New York, 1980.

\bibitem{Le09}
S.~Le~Coz.
\newblock Standing waves in nonlinear {S}chr\"odinger equations.
\newblock In {\em Analytical and numerical aspects of partial differential
  equations}, pages 151--192. Walter de Gruyter, Berlin, 2009.

\bibitem{Li86}
P.-L. Lions.
\newblock Solutions complexes d'\'equations elliptiques semilin\'eaires dans
  {${\bf R}\sp N$}.
\newblock {\em C. R. Acad. Sci. Paris S\'er. I Math.}, 302(19):673--676, 1986.

\bibitem{Ma05}
Y.~Martel.
\newblock Asymptotic {$N$}-soliton-like solutions of the subcritical and
  critical generalized {K}orteweg-de {V}ries equations.
\newblock {\em Amer. J. Math.}, 127(5):1103--1140, 2005.

\bibitem{MaMe06}
Y.~Martel and F.~Merle.
\newblock {Multi solitary waves for nonlinear {S}chr{\"o}dinger equations}.
\newblock {\em Ann. Inst. H. Poincar{\'e} Anal. Non Lin{\'e}aire},
  23(6):849--864, 2006.

\bibitem{MaMeTs06}
Y.~Martel, F.~Merle, and T.-P. Tsai.
\newblock Stability in {$H^1$} of the sum of {$K$} solitary waves for some
  nonlinear {S}chr\"odinger equations.
\newblock {\em Duke Math. J.}, 133(3):405--466, 2006.

\bibitem{Me90}
F.~Merle.
\newblock Construction of solutions with exactly {$k$} blow-up points for the
  {S}chr\"odinger equation with critical nonlinearity.
\newblock {\em Comm. Math. Phys.}, 129(2):223--240, 1990.

\bibitem{Mi05-1}
T.~Mizumachi.
\newblock Instability of bound states for 2{D} nonlinear {S}chr\"odinger
  equations.
\newblock {\em Discrete Contin. Dyn. Syst.}, 13(2):413--428, 2005.

\bibitem{Mi05-2}
T.~Mizumachi.
\newblock Vortex solitons for 2{D} focusing nonlinear {S}chr\"odinger equation.
\newblock {\em Differential Integral Equations}, 18(4):431--450, 2005.

\bibitem{Mi07}
T.~Mizumachi.
\newblock Instability of vortex solitons for 2{D} focusing {NLS}.
\newblock {\em Adv. Differential Equations}, 12(3):241--264, 2007.

\bibitem{Sc86}
P.~C. Schuur.
\newblock {\em Asymptotic analysis of soliton problems}, volume 1232 of {\em
  Lecture Notes in Mathematics}.
\newblock Springer-Verlag, Berlin, 1986.

\bibitem{Ta09}
T.~Tao.
\newblock Why are solitons stable?
\newblock {\em Bull. Amer. Math. Soc.}, 46(1):1--33, 2009.

\bibitem{We83}
M.~I. Weinstein.
\newblock Nonlinear {S}chr\"odinger equations and sharp interpolation
  estimates.
\newblock {\em Comm. Math. Phys.}, 87(4):567--576, 1982/83.

\bibitem{We85}
M.~I. Weinstein.
\newblock Modulational stability of ground states of nonlinear {S}chr\"odinger
  equations.
\newblock {\em SIAM J. Math. Anal.}, 16:472--491, 1985.

\bibitem{We86}
M.~I. Weinstein.
\newblock Lyapunov stability of ground states of nonlinear dispersive evolution
  equations.
\newblock {\em Comm. Pure Appl. Math.}, 39(1):51--67, 1986.

\end{thebibliography}
% 

% bibliography created with BibTex using the bibliographic informations provided by MathSciNet

\def\cprime{$'$}

\end{document}